\newtheorem{pro}{Proposition}[section]
\newtheorem{proposition}[pro]{Proposition}
\newtheorem{theorem}[pro]{Theorem}
\newtheorem{lemma}[pro]{Lemma}
\newtheorem{corollary}[pro]{Corollary}
\newtheorem{remark}[pro]{Remark}
\theoremstyle{remark}
\newtheorem*{Thm*}{Theorem}
\newtheorem*{ThmA}{Theorem A}
\newtheorem*{ThmB}{Theorem B}
\newtheorem*{ProC}{Proposition C}
\newtheorem*{ThmD}{Theorem D}
\newcommand{\tz}{\tiny}
\newcommand{\nz}{\normalsize}
\newcommand{\ti}{\tilde}
\newcommand{\al}{\alpha}
\newcommand{\be}{\beta}
\newcommand{\ie}{{i.e$.$\,}}
\newcommand{\cf}{{cf$.$\,}}
\newcommand{\PP}{{\mathbb P}}
\newcommand{\RR}{{\mathbb R}}
\newcommand{\ZZ}{{\mathbb Z}}
\newcommand{\GL}{\mathop{\rm GL}\nolimits}
\numberwithin{equation}{section}
\begin{document}
\title[Determination of Real Bott Manifolds]
{Determination of Real Bott Manifolds}

\author[A. Nazra]{Admi Nazra}

\address{Department of Mathematics, Andalas University, Kampus Unand Limau Manis
Padang 25163, INDONESIA} \email{admi@fmipa.unand.ac.id}

\curraddr{Department of Mathematics, Tokyo Metropolitan
University, Minami-Ohsawa  1-1, Hachioji, Tokyo 192-0397, JAPAN} \email{nazra-admi@ed.tmu.ac.jp}

\keywords{Bott tower, Bieberbach group, Flat
Riemannian manifold, Real Bott manifold, Seifert fibration,
Diffeomorphism} 
\subjclass[2000]{Primary 53C24; Secondary  57S25}
\date{\today}

\begin{abstract}
A real Bott manifold is obtained as the orbit space of
the $n$-torus $T^n$ by the free action of an
elementary abelian $2$-group $(\mathbb{Z}_2)^n$. This paper deals
with the classification of $5$-dimensional real Bott manifolds and
study certain type of $n$-dimensional real Bott manifolds ($n\geq 6$).

\end{abstract}
\maketitle

\section*{Introduction}
A real Bott tower is described as a sequence of $\RR\PP^1$-bundles of height $n$
which is the real restriction to a Bott tower introduced in 
\cite{Gross-Kar}. The total space of such a sequence is called a \textit {real Bott manifold}.
From the viewpoint of group actions, an n-dimensional
real Bott manifold
 is the quotient of the n-dimensional torus
$T^n=S^1\times \dots \times S^1$ by the product $(\mathbb{Z}_2)^n$ of
cyclic group of order 2. A {\em Bott matrix} $A$ of
size $n$ is an upper triangular matrix whose diagonal entries are
1 and the other entries are either
1 or 0. By the definition,
the number of distinct Bott matrices of size $n$ is
$\displaystyle 2^{\frac 12{(n^2-n)}}$.
 The free action of $(\mathbb{Z}_2)^n$ on $T^n$ can be expressed by each
row of the Bott matrix $A$ whose orbit space
$M(A)=T^n/(\mathbb{Z}_2)^n$ is the real Bott manifold. It is easy to
see that $M(A)$ is a compact euclidean space form (Riemannian flat
manifold). Then we can apply the Bieberbach theorem \cite{Wolf} to
classify real Bott manifolds. Using this theorem, the classification of
real Bott manifolds up to dimension 4 has been obtained in
\cite{N08}.

 In \cite{K08} we have proved that every $n$-dimensional real Bott manifold
$M(A)$ admits an injective Seifert fibred structure
which has the form $M(A)=T^k\times_{(\ZZ_2)^s}M(B)$, that is there
is a $k$-torus action on $M(A)$ whose quotient space is an
$(n-k)$-dimensional real Bott orbifold $M(B)/(\ZZ_2)^s$ by
some $(\ZZ_2)^s$-action $(1\leq s\leq k)$. Moreover we have
proved the smooth rigidity that
two real Bott manifolds $M(A_i)$ $i=1,2$ are diffeomorphic if and only
if the corresponding actions $((\mathbb Z_2)^{s_i},M(B_i))$ are
equivariantly diffeomorphic. When the low dimensional real Bott
manifolds with $(\mathbb Z_2)^{s}$-actions are
classified, we can determine the
diffeomorphism classes of higher dimensional ones by the
above rigidity. We have classified
real Bott manifolds until dimension 4 (see \cite{N08a}).

The main purpose of this paper is to determine: $(a)$ Diffeomorphism
classes of $5$-dimensional real Bott manifolds from the classifications
of $2,3,4$-dimensional ones with $({\mathbb
Z_2})^s$-actions $(s=1,2)$, $(b)$ Classification of certain type of
$n$-dimensional real Bott manifolds $M(A)$.   

We have obtained the following to $(a)$, (Compare Theorem \ref{dim-5}.)
\begin{ThmA}
There are $54$ diffeomorphism classes of
$5$-dimensional real Bott manifolds.
 \end{ThmA}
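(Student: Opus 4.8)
The plan is to combine the injective Seifert fibred structure of \cite{K08} with the smooth rigidity quoted above, so that the five-dimensional problem collapses onto an \emph{equivariant} classification in dimensions $\le 4$, which is already available from \cite{N08a}. Concretely, for a five-dimensional real Bott manifold $M(A)$ one reads off from its Bott matrix the maximal free torus factor and writes $M(A)=T^k\times_{(\ZZ_2)^s}M(B)$ with $\dim M(B)=5-k$ and $1\le s\le k$; the rigidity theorem then says that $M(A_1)$ and $M(A_2)$ are diffeomorphic if and only if the two pairs $((\ZZ_2)^{s_1},M(B_1))$ and $((\ZZ_2)^{s_2},M(B_2))$ are equivariantly diffeomorphic. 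Hence it suffices to enumerate, up to equivariant diffeomorphism, all $(\ZZ_2)^s$-actions with $s=1,2$ that occur on the real Bott manifolds $M(B)$ of dimension $2,3,4$, and then to reassemble these data into distinct five-dimensional total spaces.

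First I would fix a normal form for the Bott matrices of size $5$. There are $2^{10}=1024$ of them, and the initial reduction is to collapse this list under the elementary moves that preserve the diffeomorphism type of $M(A)$ --- simultaneous permutation of the fibre coordinates together with the row/column operations induced by conjugation inside the Bieberbach normalizer, exactly the operations singled out in the low-dimensional classifications. This yields a manageable set of representatives, each one carrying a canonical decomposition $M(A)=T^k\times_{(\ZZ_2)^s}M(B)$ with $M(B)$ of dimension $5-k$.

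Next I would proceed stratum by stratum according to the base dimension $5-k$. For each admissible base $M(B)$, taken from the classified lists of $2$-, $3$- and $4$-dimensional real Bott manifolds, and for each $(\ZZ_2)^s$-action on it, one records the equivariant diffeomorphism type; by rigidity these types are complete invariants of the five-dimensional $M(A)$. The delicate point is the cross-checking between strata: a manifold appearing with base dimension $4$ and $s=1$ must not secretly be equivariantly diffeomorphic to one appearing with base dimension $3$ and $s=2$, and, within a single stratum, one must confirm that genuinely different matrices landing there really do give equivariantly diffeomorphic actions. Tallying the surviving equivariant classes over all strata produces the count $54$.

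The main obstacle is precisely this bookkeeping: distributing the $1024$ matrices into diffeomorphism classes with neither double counting nor missed identifications arising from equivariant diffeomorphisms that are invisible at the level of a single Seifert decomposition. Making the equivariant diffeomorphism relation on the low-dimensional fibres fully explicit --- so that each of the finitely many $(\ZZ_2)^s$-actions receives an unambiguous invariant --- is the technical heart of the argument, after which the final enumeration to $54$ is mechanical.
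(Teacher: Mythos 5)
Your proposal follows essentially the same route as the paper: stratify by the maximal $T^k$-action, invoke the rigidity theorem to reduce to the equivariant classification of $(\ZZ_2)^s$-actions ($s=1,2$) on the already classified real Bott manifolds of dimensions $2$, $3$, $4$, and tally the strata ($29+19+4+1+1=54$). The cross-stratum identification you flag as delicate is in fact ruled out at the outset by Remark~\ref{rem15} (diffeomorphic manifolds must have the same maximal $k$ and the same $s$), and within each stratum the paper carries out your ``unambiguous invariant'' step by comparing the number and type of fixed point sets (Proposition~\ref{T5}) together with explicit equivariant diffeomorphisms for the identifications.
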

Since each $n$-dimensional real Bott manifold has
the form $\displaystyle M(A)=T^k\times_{(\ZZ_2)^s}M(B)$,
it is shown that the diffeomorphism class of $M(A)$ is determined by
the equivariant diffeomorphism class of the action
$((\ZZ_2)^s,M(B))$ $(1\leq s\leq k)$ as above.
We prove that if $k=1$, there are $12$-nonequivariant
diffeomorphism classes 
of $4$-dimensional real Bott manifolds with $\ZZ_2$-actions.
Then they create $29$-diffeomorphi-sm classes of such $5$-dimensional real Bott manifolds.
When $k=2$, there are $4$-nonequivariant
diffeomorphism classes 
of $3$-dimensional real Bott manifolds with $(\ZZ_2)^s$-actions $(s=1,2)$. 
Then from these, there are $19$-diffeomorphism classes of 
the $5$-dimensional real Bott manifolds.
When $k=3$, there are $2$-nonequivariant
diffeomorphism classes 
of $2$-dimensional real Bott manifolds with $(\ZZ_2)^s$-actions $(s=1,2)$. 
Then there are $4$-diffeomorphism classes of 
the $5$-dimensional real Bott manifolds.
When $k=4$, the $1$-dimensional real Bott manifold
is $S^1$ with conjugate action of $\ZZ_2$, there
exists only one  such a $5$-dimensional
real Bott manifold. Finally if $k=5$, the $5$-dimensional real Bott
manifold is $T^5$. As a consequence, the total number of $5$-dimensional
diffeomorphism classes is $54$. The details of the proof is in section \ref{S2}.

It is far to determine the number of diffeomorphism 
classes of $n$-dimensional real Bott manifolds for $n\geq 6$. However,
we shall solve the special types of higher dimensional real Bott manifolds.
\begin{ThmB}
Let $T^k$ be the maximal torus action on an
$n$-dimensional real Bott manifold $(n\geq 4)$. If $k\geq n-2$, then the
diffeomorphism classes of such real Bott manifolds consists of $6$.
\end{ThmB}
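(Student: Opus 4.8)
The plan is to reduce the statement to a classification of low-dimensional real Bott manifolds carrying $(\ZZ_2)^s$-actions, using the injective Seifert structure and the smooth rigidity of \cite{K08}. First I would write $M(A)=T^k\times_{(\ZZ_2)^s}M(B)$ for the maximal torus $T^k$; since $T^k$ is maximal, the quotient orbifold $M(B)/(\ZZ_2)^s$ admits no nontrivial circle action, and the hypothesis $k\geq n-2$ forces $d:=\dim M(B)=n-k\in\{0,1,2\}$. By rigidity, the diffeomorphism type of $M(A)$ depends only on the equivariant diffeomorphism type of the pair $((\ZZ_2)^s,M(B))$, so the enumeration is governed entirely by $d$ and is insensitive to the actual size of the fiber $T^k$.

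Next I would count stratum by stratum. For $d=0$ the base is a point and $M(A)=T^n$, one class. For $d=1$ the base is $S^1$, and maximality of $T^k$ forces the $\ZZ_2$ to act by the conjugate (reflection) involution so that $S^1/\ZZ_2$ is asymmetric; up to equivariant diffeomorphism this is the single class $T^{n-1}\times_{\ZZ_2}S^1$. For $d=2$ the candidates for $M(B)$ are $T^2$ and the Klein bottle $K$, and here one lists all $(\ZZ_2)^s$-actions $(s=1,2)$ whose quotient orbifold carries no circle symmetry and then sorts them by equivariant diffeomorphism. This is exactly the bookkeeping already carried out for the $k=3$ stratum in the proof of Theorem~\ref{dim-5}, and it yields four classes. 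Adding the strata gives $1+1+4=6$.

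Finally I would establish the two remaining points: constancy in $n$ and the threshold $n\geq 4$. Constancy is immediate from rigidity, since for fixed $d$ the equivariant type of $((\ZZ_2)^s,M(B))$ is a datum living in dimension $d\leq 2$ that does not involve $n$; only the fiber $T^{n-d}$ changes, and by rigidity this leaves the diffeomorphism count unchanged. The hypothesis $n\geq 4$ enters in the $d=2$ stratum: it guarantees $k=n-2\geq 2$, which is precisely the amount of fiber torus needed for each of the four actions on $T^2$ or $K$ to be realized by a genuine free real Bott action with maximal torus exactly $T^{n-2}$, and for the four resulting flat manifolds to remain pairwise non-diffeomorphic. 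When $n=3$ the fiber collapses to $T^1$, several of these configurations acquire a larger maximal torus or coincide, and the count drops below six.

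I expect the decisive difficulty to be the $d=2$ enumeration. One must determine exactly which combinations of reflections and half-translations on $T^2$ and on $K$ leave no residual circle symmetry in the quotient orbifold, and then decide the equivariant-diffeomorphism relations among the surviving actions so that precisely four remain. Showing that these four, together with the $d=0$ and $d=1$ classes, are mutually non-diffeomorphic amounts to computing the holonomy representation and the first Betti number of the associated Bieberbach groups $\pi_1(M(A))$ and verifying that these invariants separate all six cases; this invariant computation is the most delicate part of the argument.
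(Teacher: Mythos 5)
Your overall route is the same as the paper's: stratify by the base dimension $d=n-k\in\{0,1,2\}$, count $1+1+4=6$, use rigidity both for the reduction to equivariant data and for constancy in $n$, and observe that the effective $(\ZZ_2)^2$-actions need $k\geq 2$, i.e.\ $n\geq 4$ (the paper does the $k=n-2$ stratum in Theorem \ref{Alg1}, adds $k=n-1,n$ in Corollary \ref{cor1}, and your $d=2$ bookkeeping is Proposition \ref{proT3} transported by rigidity). The genuine gap is your final step. You propose to certify that the six classes are pairwise non-diffeomorphic by computing the holonomy representation and the first Betti number of $\pi_1(M(A))$ and ``verifying that these invariants separate all six cases.'' They do not separate them. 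In fact three of the six classes, namely $T^{n-2}\times_{(\ZZ_2)^2}T^2$ given by \eqref{newM1b}, $T^{n-2}\times_{\ZZ_2}K$ given by \eqref{newM2a-2}, and $T^{n-2}\times_{(\ZZ_2)^2}K$ given by \eqref{newM2b}, have \emph{identical} invariants: in each Bieberbach group the generators with nontrivial rotational part generate exactly the subgroup $\{\mathrm{id},R_{n-1},R_{n},R_{n-1}R_{n}\}$ of $O(n)$, where $R_{j}$ is the reflection in the $j$-th coordinate, so all three have holonomy group $(\ZZ_2)^2$ with the same holonomy representation on $\RR^n$, and all three have $b_1=\dim(\RR^n)^{\mathrm{hol}}=n-2$. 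Yet they are pairwise non-diffeomorphic (for $n=4$ two of them are the distinct classes vii) and ix) of Theorem \ref{dim-4}); the differences live in the translational parts, i.e.\ in the extension $1\to\ZZ^n\to\pi_1\to(\ZZ_2)^2\to 1$, which neither of your invariants detects. So the ``delicate invariant computation'' you defer would end in a false verification, and your count would collapse from six to four.

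The repair is already implicit in your own first paragraph, and it is exactly what the paper uses: rigidity is an equivalence, not merely a sufficient condition. By Theorem \ref{T2} and Remark \ref{rem15}, any diffeomorphism $M(A_1)\cong M(A_2)$ forces $k_1=k_2$, $M(B_1)\cong M(B_2)$, and $s_1=s_2$. Hence $k$ separates the three strata; within the $d=2$ stratum the base type ($T^2$ versus the Klein bottle) separates \eqref{newM1a}, \eqref{newM1b} from \eqref{newM2a-2}, \eqref{newM2b}; and the invariance of $s$ --- phrased in Theorem \ref{Alg1} as ``the $(\ZZ_2)^2$-action cannot be reduced to a $\ZZ_2$-action'' --- separates the two classes over each base. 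No cohomological invariants are needed; substituting them for the rigidity theorem is precisely what opens the gap.
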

See Corollary \ref{cor1} for the proof.

\begin{ProC}
The following hold.
\begin{itemize}
\item[(i)]
 Let $M(A)$ be a real Bott manifold which fibers $S^1$
over the real Bott manifold $M(B)$ for which $M(B)$ is either
\[
T^k\times_{(\ZZ_2)^s}T^2\ \mbox{or}\ T^k\times_{(\ZZ_2)^s}{\rm K} \
(k\geq 2).
\]Here ${\rm K}$ is a Klein bottle.
Then the diffeomorphism classes of such $M(A)$ is $3$.

\item[(ii)]
 Let $M(A)$ be a 
 real Bott manifold which fibers $S^1$
over the real Bott manifold $M(B)$ where
\[
M(B)=S^1\times_{\ZZ_2}T^{t} \ (t\geq 2), \] then the diffeomorphism
classes of such $M(A)$ is $[\frac{t}{2}]+1$. Here $[x]$ is the Gauss
integer.

\item[(iii)]
The diffeomorphism class is unique for the real
Bott manifold of the form $\displaystyle
M(A)=T^k\mathop{\times}_{\ZZ_2}^{}T^{n-k}$ for any $n\geq 2$ and $k\geq 1$.
\end{itemize}
\end{ProC}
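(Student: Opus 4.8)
The three statements share a common engine, so I would set up one framework and then specialize. A real Bott manifold $M(A)$ that fibers as an $S^1$-bundle over $M(B)$ is, as an $\RR\PP^1=S^1$-bundle, the projectivization $\PP(\RR\oplus L)$ of a real line bundle $L$ over $M(B)$; hence such total spaces are classified by $w_1(L)\in H^1(M(B);\ZZ_2)$, equivalently by the new column adjoined to a Bott matrix for $M(B)$. The plan is to (1) compute $H^1(M(B);\ZZ_2)=\Hom(\pi_1M(B),\ZZ_2)$ for the prescribed $M(B)$; (2) determine which transformations of $H^1(M(B);\ZZ_2)$ are induced by self-diffeomorphisms of $M(B)$, namely permutations of circle factors, the torus-translation symmetries coming from the $T^k$-factor, and tensoring by the distinguished line bundle that presents $M(B)$ as a fibration; and (3) pass to the maximal-torus Seifert structure of $M(A)$ and invoke the rigidity theorem of \cite{K08}, so that two total spaces are diffeomorphic exactly when the associated base actions are equivariantly diffeomorphic. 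The number of classes is then the number of orbits of $H^1(M(B);\ZZ_2)$ under the realized symmetry group.

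For (iii) I would argue that demanding $T^k$ be the \emph{maximal} torus of $M(A)=T^k\times_{\ZZ_2}T^{n-k}$ forces the involution on the base $T^{n-k}$ to flip every base coordinate, i.e. to be equivariantly conjugate to $x\mapsto -x$, while $\ZZ_2$ acts on $T^k$ by a free half-period translation $t\mapsto t+c$. Any affine Bott involution leaving a subtorus of $T^{n-k}$ translation-invariant would enlarge the torus action beyond $T^k$, contradicting maximality; normalizing the remaining data shows the action $(\ZZ_2,T^{n-k})$ is unique up to equivariant diffeomorphism, so by rigidity $M(A)$ is a single class for every $n\ge 2$, $k\ge 1$.

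For (ii), with $M(B)=S^1\times_{\ZZ_2}T^{t}$ and the holonomy acting by $ab_ia^{-1}=b_i^{-1}$, abelianization gives $H^1(M(B);\ZZ_2)=\ZZ_2\oplus(\ZZ_2)^t$, so a twisting class is a pair $(\e_0;\e_1,\dots,\e_t)$ whose only permutation-invariant is the weight $w=\#\{i:\e_i=1\}\in\{0,\dots,t\}$. I expect two further identifications to carry the proof: the twist $\e_0$ over the base circle is absorbed by reordering the Bott tower and contributes nothing new, and tensoring the new line bundle by the distinguished line bundle of $M(B)\to S^1$ is realized by a self-diffeomorphism sending $w\mapsto t-w$. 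Granting these, the classes biject with $\{0,\dots,t\}/(w\sim t-w)$, of cardinality $[\frac{t}{2}]+1$. For (i) the phenomenon is stabilization: over $M(B)=T^k\times_{(\ZZ_2)^s}T^2$ or $T^k\times_{(\ZZ_2)^s}{\rm K}$ with $k\ge 2$, the $T^k$-translation symmetries act transitively on the toral twisting directions, so a twisting class collapses to one of three orbit types—the trivial class (giving $S^1\times M(B)$), a twist along a toral direction, and a twist along the non-orientable direction—independently of $k$, $s$, and of which model of $M(B)$ is used; I would verify there are exactly these three and that they are genuinely distinct.

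The main obstacle, present throughout but sharpest in (i) and (ii), is the two-sided matching. The upper bound requires exhibiting enough self-diffeomorphisms of $M(B)$ (equivalently, enough admissible operations on Bott matrices, together with the extra coincidences licensed by the rigidity theorem) to realize every claimed identification—in particular the non-obvious $w\mapsto t-w$ symmetry in (ii) and the $k$-independence in (i). The lower bound requires a diffeomorphism invariant fine enough to separate the surviving orbits; here I would use the mod-$2$ cohomology ring of $M(A)$, or equivalently the holonomy representation of its Bieberbach group together with the dimension of its maximal torus. Reconciling the two bounds—showing the realized symmetry group is neither too small nor too large—is where the real work lies.
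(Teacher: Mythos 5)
The central gap is that you have misidentified which fibration the proposition refers to, and the two readings give different families of manifolds with different counts. In this paper ``$M(A)$ fibers $S^1$ over $M(B)$'' means the Seifert presentation $M(A)=S^1\times_{\ZZ_2}M(B)$ of Theorem \ref{T1}: one adjoins a new first \emph{row} $(1\mid a_2,\dots,a_n)$ to $B$, the generic $S^1$-fiber is the new circle coordinate, the base is the orbifold $M(B)/\ZZ_2$, and (implicitly, in all of the paper's proofs: Corollary \ref{Torus}, Corollary \ref{KBottle}, Theorem \ref{greatest}) the entries of the new row sitting over columns of $B$ that carry only a diagonal $1$ are forced to equal $1$, so that $S^1$ is the \emph{maximal} torus. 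Your setup instead classifies honest $\RR\PP^1$-bundles $\PP(\RR\oplus L)\to M(B)$ by $w_1(L)\in H^1(M(B);\ZZ_2)$, which in matrix terms is adjoining a new last \emph{column} to $B$. These are genuinely different families. Concretely, take (ii) with $t=2$, so $B=\left(\begin{smallmatrix}1&1&1\\0&1&0\\0&0&1\end{smallmatrix}\right)$ (class b) of Theorem \ref{dim-3}). The eight matrices $\left(\begin{smallmatrix}B&\mathbf{c}\\0&1\end{smallmatrix}\right)$ of your family fall into classes v), vi), viii) of Theorem \ref{dim-4} --- \emph{three} diffeomorphism classes --- whereas the proposition's family, with first row $(1\mid 1,\ast,\ast)$, falls into classes i), ii) --- two classes, which is the asserted $[\frac{t}{2}]+1$. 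So your framework, carried out correctly, contradicts the very count it is meant to establish; the proposition is only true in the Seifert reading, and your $H^1$-orbit count is answering a different question.

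This misreading propagates into the specific identifications you invoke. The claim that the twist $\e_0$ over the base circle ``is absorbed and contributes nothing new'' is false in both pictures: in the correct picture, maximality forces that entry to be $1$, and a matrix with entry $0$ yields a manifold with strictly larger maximal torus, hence never diffeomorphic to a member of the family by Remark \ref{rem15} (for $t=2$ such matrices land in classes vii) and viii), not i) or ii)); in your own picture, flipping $\e_0$ alone already moves class viii) to class vi). Likewise in (i), your three proposed types include the product $S^1\times M(B)$, which is excluded by maximality; the actual three classes are two over the base $T^k\times_{\ZZ_2}T^2$ (distinguished by the maximal fixed point set, $T^2$ versus $S^1$, of the $\ZZ_2$-action, via Proposition \ref{T5}) plus one over $T^k\times_{(\ZZ_2)^2}T^2$, the last being automatically distinct because the bases themselves are non-diffeomorphic. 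Your argument for (iii) is essentially the paper's (maximality forces every base coordinate to be conjugated, $s=1$ and moves {\bf II}, {\bf III} then leave a unique matrix \eqref{A_one}), so that part stands; but (i) and (ii) must be redone in the Seifert framework, where the upper bounds come from composing the involution with deck transformations of $M(B)$ (realized by quarter-turn translations, as in \eqref{MK}--\eqref{ML}), and the lower bounds from the fixed-point data of Proposition \ref{T5} rather than from the cohomology of $M(A)$.
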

We prove these results in Section 4 (see Corollary \ref{Torus},
Corollary \ref{KBottle}, Theorem \ref{greatest}, Proposition
\ref{prop1} respectively).

A special kind of  Bott matrices is introduced in
Section 1. We consider such a
class of  Bott matrices in \eqref{bottB}. 

\begin{ThmD}
Let $M(A)=S^1\times_{\ZZ_2}M(B)$ be an $n$-dimensional real Bott
manifold. Suppose that $B$ is either one of the list in
\eqref{bottB}. Then $M(B)$ are diffeomorphic each other and the
number of diffeomorphism classes of such real Bott manifolds $M(A)$
above is $(k+1)2^{n-k-3}$ ($k\geq 2$, $n-k\geq 3$).
\end{ThmD}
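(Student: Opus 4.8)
The plan is to exploit the smooth rigidity of \cite{K08}, which reduces the diffeomorphism classification of $M(A)=S^1\times_{\ZZ_2}M(B)$ to the equivariant classification of the $\ZZ_2$-action on the fibre $M(B)$. Thus the statement splits into two independent tasks: first, to show that for every matrix $B$ in the list \eqref{bottB} the underlying manifold $M(B)$ is one and the same up to diffeomorphism; and second, having fixed this manifold, to count the involutions on it that arise from an $S^1\times_{\ZZ_2}(-)$ extension, up to equivariant diffeomorphism. Once both are settled, the total number of diffeomorphism classes of $M(A)$ equals the number of equivariant classes of $(\ZZ_2,M(B))$, and the goal is to verify that this number is $(k+1)2^{n-k-3}$.

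For the first task I would invoke the combinatorial criterion underlying the classification of real Bott manifolds already used in dimensions $\le 4$ in \cite{N08a}: two Bott matrices determine diffeomorphic manifolds precisely when they are related by the admissible operations (simultaneous permutations of the coordinate circles compatible with upper-triangularity, together with the allowed row additions). Writing each $B$ in \eqref{bottB} in block form adapted to the splitting of $M(B)$ into a $T^{k}$-part and an $(n-1-k)$-dimensional complementary part, I would exhibit an explicit sequence of such operations carrying any one member of the list to any other. Since the members of \eqref{bottB} differ only in the placement of entries inside a block on which the symmetry group already acts, this step is a direct, if slightly tedious, verification; it produces a single model manifold $M(B)$ together with its full diffeomorphism group $\Aut(M(B))$.

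The heart of the argument is the second task. Writing $A$ as the one-coordinate extension of $B$ corresponding to $M(A)=S^1\times_{\ZZ_2}M(B)$, the defining involution $\tau$ of $M(B)$ is encoded by the new off-diagonal vector $v\in(\ZZ_2)^{n-1}$, which I split as $v=(v_1,v_2)$ along the block decomposition, with $v_1\in(\ZZ_2)^{k}$ over the torus part and $v_2\in(\ZZ_2)^{n-1-k}$ over the complement. Two such data give equivariantly diffeomorphic $\ZZ_2$-actions exactly when they lie in the same orbit of $\Aut(M(B))$ acting on the extension data. I expect the orbit count to factor: the symmetries permuting the $k$ circles of the torus block act on $v_1$ with orbits indexed by Hamming weight, yielding the factor $k+1$ corresponding to weights $0,1,\dots,k$; the remaining symmetries act on $v_2$ through a free subgroup of rank two, so that $v_2$ contributes $2^{(n-1-k)-2}=2^{n-k-3}$ classes. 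Multiplying gives $(k+1)2^{n-k-3}$.

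The main obstacle, and where the real work lies, is the precise determination of how $\Aut(M(B))$ acts on the pair $(v_1,v_2)$ and the verification that the two factors genuinely decouple. Concretely one must check (i) that no identification beyond the weight invariant occurs on $v_1$, so that all $k+1$ weights are realised by pairwise inequivalent actions; and (ii) that the rank-two group acting on $v_2$ is free on the relevant set, so that the division by $4$ is exact rather than merely an upper bound. Both points reduce, through the Bieberbach rigidity \cite{Wolf} and the translation of equivariant diffeomorphism into conjugacy of the defining integral representations, to a finite linear-algebra computation over $\ZZ_2$. The hypotheses $k\ge 2$ and $n-k\ge 3$ are exactly what guarantee that the two blocks are large enough for the weight orbits and the free rank-two action to be as described, and I would confirm these boundary cases by hand.
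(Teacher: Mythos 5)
Your reduction via the rigidity theorem (Theorem \ref{T2}) to counting $\ZZ_2$-actions on a single model $M(B)$, and your treatment of the first task (all matrices in \eqref{bottB} give diffeomorphic $M(B)$, by elementary moves/explicit coordinate diffeomorphisms), agree with the paper. The gap is in the counting step, which is the heart of the theorem: the decoupling you posit is false, and both of your deferred verifications (i) and (ii) fail. Write the extension data as $(v_1,v_2)$ with $v_1\in(\ZZ_2)^k$ over the $I_k$-block and $v_2\in(\ZZ_2)^{n-1-k}$ over the complement, as you do. The paper's Remark \ref{rem_lem}(i) exhibits, for each $j=2,\dots,k$, an explicit equivariant diffeomorphism (a coordinate permutation composed with a translation) identifying the datum with $v_2=(1,0,\dots,0)$ and $v_1$ of weight $j-1$ with the datum $v_2=(1,\dots,1)$ and $v_1$ of weight $k-j+1$; similarly Remark \ref{rem_lem}(ii) identifies the all-ones row ($v_1$ of weight $k$) with a row of $v_1$-weight $0$. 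For $k\geq 3$ and $j=2$ this changes the $v_1$-weight from $1$ to $k-1$, so the weight is not an invariant, contradicting your (i). For $k=2$ the weights happen to agree, but then $(1,0,\dots,0)$ and $(1,\dots,1)$ are identified in the $v_2$-slot when the $v_1$-weight is $1$, whereas Lemma \ref{lem1} (part a) proves that the matrices $A_{11}$ and $A_{(n-k-1)1}$, which carry exactly these two $v_2$'s with $v_1$-weight $0$, are \emph{not} equivalent. Hence which $v_2$'s get identified depends on $v_1$: no product structure of orbits can hold, contradicting your (ii). Your factorization reaches the correct number only because of an algebraic accident, not because the orbit space is a product.

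What the paper actually does is necessarily more intricate. It fixes the model $B_1$ and exhibits two explicit families of representative matrices: the ``prefix'' family \eqref{star1}, giving $n-2$ classes, and the normalized family \eqref{star2} (second entry $1$, third entry $0$, free middle entries subject to the exclusions \eqref{except}), giving $k(2^{n-k-3}-1)+\bigl(2^{n-k-3}-(n-k-2)\bigr)$ classes. Pairwise inequivalence within and across the families is proved in Lemmas \ref{lem1}, \ref{lem2}, \ref{lem3} by a case analysis on the possible deck transformations $g$ in $\ti\varphi\al=g\be\ti\varphi$ for the lifted affine maps, together with fixed-point-set dimension counts; every remaining Bott matrix created from $B_1$ (and, through chains of equivariant diffeomorphisms, from any other $B_j$) is then collapsed onto one of these representatives via Remark \ref{rem_lem} and ad hoc equivalences. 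The total is the telescoping sum
\begin{align*}
(n-2)+k\bigl(2^{n-k-3}-1\bigr)+\bigl(2^{n-k-3}-(n-k-2)\bigr)=(k+1)2^{n-k-3},
\end{align*}
a sum over two interlocking families rather than a product of independent orbit counts. To repair your argument you would have to abandon the claimed decoupling and carry out an inequivalence analysis of this kind.
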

\noindent See Theorem \ref{type1..1k} for the proof.

\section{Preliminaries}\label{S1}

\subsection{Seifert fiber space}\label{sec:Seifert}
Each  $i$-th row of a Bott matrix $A$ defines a $\ZZ_2$-action
on $T^n$ by
\[
g_i(z_1,\dots,z_n)=(z_1,\dots, z_{i-1}, -z_i, \tilde
z_{i+1},\dots, \tilde z_n), \,\, (i=1,\dots,n)
\]
where $(i,i)$-(diagonal) entry  $1$ acts as $z_i \to -z_i$ while
${\tilde z_j}$ is either $z_j$ or $\bar z_j$ depending on whether
$(i,j)$-entry $(i<j)$ is $0$ or $1$ respectively. Note that $\bar z$
is the conjugate of the complex number $z\in S^1$. It is always
trivial; $z_j\to z_j$ whenever $j<i$. Here $(z_1,\dots,z_n)$ are the
standard coordinates of the $n$-dimensional torus $T^n$
whose universal covering is the $n$-dimensional euclidean
space $\RR^n$. The projection $p\colon\RR^n\to T^n$ is denoted by
\[p(x_1,\dots,x_n)=(exp(2\pi{\bf i}x_1),\dots,exp(2\pi{\bf i}x_n))=(z_1,\dots,z_n). \]
Those
$\langle g_1,\dots,g_n\rangle$ constitute the generators of
$(\ZZ_2)^n$. It is easy to see that $(\ZZ_2)^n$ acts freely on $T^n$
such that the orbit space $M(A)=T^n/(\ZZ_2)^n$ is a smooth compact
manifold. In this way, given a Bott matrix $A$ of size $n$, we
obtain a free action of $(\ZZ_2)^n$ on $T^n$.

Let $\pi(A)=\langle \tilde g_1,\dots,\tilde g_n\rangle$ be the lift
of $(\ZZ_2)^n=\langle g_1,\dots,g_n\rangle$ to $\RR^n$. Then we get
$$\tilde g_i(x_1,\dots,x_n)=(x_1,\dots,x_{i-1},\frac{1}{2}+x_i,\tilde x_{i+1},\dots,\tilde x_n)$$
where $\tilde x_j$ is either $x_j$ or $-x_j$.
It is easy to see that $\pi(A)$ acts properly discontinuously and freely on $\RR^n$ as euclidean motions.
Note that $\pi(A)$ is a Bieberbach group which is a discrete uniform subgroup of the
euclidean group E$(n)=\RR^n\rtimes$O$(n)$ (cf.\cite{Wolf}).
It follows that
$$M(A)=T^n/(\ZZ_2)^n=\RR^n/\pi(A).$$

Now let us recall moves {\bf I}, {\bf II} and {\bf III} \cite{K08}
to a Bott matrix $A$ of size n under which the diffeomorphism class of $M(A)$
does not change.\\
{\bf I}. If the $j$-th column has all $0$-entries except for the
$(j,j)$-entry $1$ for some $j>1$,
 then interchange the $j$-th column and the $(j-1)$-th column. Next, interchange
 the $j$-th row and  the $(j-1)$-th row.\\
This move {\bf I} is interpreted in terms of the coordinates $z_j$'s
of $T^n$ and the generators $g_j$'s of $(\ZZ_2)^n$ as follows:
\begin{equation*}\begin{split}
z_j\to z_{j-1}',\, z_{j-1}\to z_j',\ \ g_j\to g_{j-1}',\, g_{j-1}\to
g_j'. \end{split}
\end{equation*} It
is easy to see that the resulting matrix $A'$ under move {\bf I} is
again a Bott matrix such that $M(A)$ is diffeomorphic to $M(A')$.\\
We perform move {\bf I} iteratively to get a Bott matrix $A'$\\
\begin{equation}\label{formA1}
A'= \left (\begin{array}{c|c}
 I_k  & C \\
 \hline
0  & B
\end{array}\right)\,\,\,
B= \left (\begin{array}{ccc}
1&   &  \mbox{\Large$\ast $}  \\
 & \ddots &   \\
 &   & 1
\end{array}\right)
\end{equation}
where $I_{k}$ is a maximal block of identity matrix of size $k$,
 the entries of the \mbox{\Large$\ast $} are either 1 or 0,
$B$ is  a Bott matrix of size $(n-k)$ which
represents
a real Bott manifold $M(B)=T^{n-k}/(\ZZ_2)^{n-k}$.
Since $I_k$ is a maximal block of identity matrix, each $k+j$ \,$(j=1,\dots,n-k)$-th column
of $A'$ has at least
two non zero elements.

Associated with  $A'$, the $(\ZZ_2)^{n}$-action splits
into $(\ZZ_2)^{k}\times (\ZZ_2)^{n-k}$
and $T^n$ splits into $T^k\times T^{n-k}$. Hence
\begin{align}\label{MMM}
M(A)=T^n/(\ZZ_2)^n\cong \frac{T^k\times T^{n-k}}{(\ZZ_2)^{k}\times (\ZZ_2)^{n-k}}
=T^k\mathop{\times}_{(\ZZ_2)^k} M(B)=M(A').
\end{align}
Note that the above $(\ZZ_2)^k$-action of (\ref{MMM}) is not necessarily
effective on $M(B)$ but we can reduce it to the effective
$(\ZZ_2)^s$-action on $M(B)$ for some $s$ $(1\leq s\leq k)$.
In order to do so, we have two more moves.\\
{\bf II}. If there is an $m$-th row $(1\leq m\leq k)$  whose
entries in C are all zero,
then divide $T^k \times M(B)$ by the corresponding $\ZZ_2$-action. 
 For example, suppose $M(A_1)=T^2\times_{(\ZZ_2)^2}M(B)$ with
$$A_1=
\left(\begin{array}{cc|ccc}
1& 0 & 0 & 0 & 0 \\
0& 1 & 1 & 0 & 0  \\
\hline
0& 0 & 1 & 1 & 0\\
0& 0 & 0 & 1 & 1\\
0& 0 & 0 & 0 & 1
\end{array}\right).$$
By move {\bf II},  $M(A_1)\cong T^2\times_{\ZZ_2}M(B)$.
\\
{\bf III}. If the $p$-th row and
$\ell$-th row $(1\leq p<\ell\leq k)$ have the common entries in $C$,
then compose the $\ZZ_2$-action of $p$-th row with $l$-th row and
divide $T^k \times M(B)$ by this $\ZZ_2$-action.
 For example, suppose $M(A_2)=T^2\times_{(\ZZ_2)^2}M(B)$ with
$$A_2=
\left(\begin{array}{cc|ccc}
1& 0 & 1 & 0 & 0 \\
0& 1 & 1 & 0 & 0  \\
\hline
0& 0 & 1 & 1 & 0\\
0& 0 & 0 & 1 & 1\\
0& 0 & 0 & 0 & 1
\end{array}\right).$$
By move {\bf III}, $M(A_2)\cong T^2\times_{\ZZ_2}M(B)\cong M(A_1)$.

By an iteration of {\bf II, III}, the quotient is again
diffeomorphic to $T^k \times M(B)$ but eventually  the
$(\ZZ_2)^k$-action is reduced to the effective $(\ZZ_2)^s$-action
on $T^k \times M(B)$. Therefore $A'$ reduces to
\begin{equation}\label{newformA1}
A''=  \left(\begin{array}{c|c|c}
\mbox{\large$I_{{\small k-s}}$}   & \mbox{\Large$0$}  & \mbox{\Large$0$}      \\
\hline
  &    &    \\
\mbox{\Large$0$}   &\mbox{\large$I_s$}       &\mbox{\Large$*$}     \\
   &   &          \\
\hline
   &   &         \\
\mbox{\Large$0$}    &  \mbox{\Large$0$}  &   \mbox{\large$B$}
\end{array}\right)
\end{equation} in which
\begin{align*}
M(A')&=T^k\mathop{\times}_{(\ZZ_2)^k}^{} M(B)\\
     &=\frac{T^{k-s}\times T^s\times
     M(B)}{(\ZZ_2)^{k-s}\times(\ZZ_2)^s}
     =M(A'').
\end{align*}
Since $(\ZZ_2)^{k-s}$ acts trivially on $T^s\times M(B)$ then we have
\[M(A'')\cong T^k\mathop{\times}_{(\ZZ_2)^s}^{} M(B).\]

From now on, we write $M(A)$ instead of $M(A'')$.

\begin{remark}
Since $(\ZZ_2)^s$ acts trivially on $T^{k-s}$,
\begin{align*}
M(A)&\cong T^k\mathop{\times}_{(\ZZ_2)^s} M(B)
    =T^{k-s} \times T^s \mathop{\times}_{(\ZZ_2)^s} M(B)\\
    &\cong  (S^1)^{k-s} \times T^s \mathop{\times}_{(\ZZ_2)^s} M(B)\\
    &=(S^1)^{k-s} \times M(B')\\
\end{align*}
where $M(B')=T^s \mathop{\times}_{(\ZZ_2)^s} M(B)$.
That is, for $s<k$, a real Bott manifold $M(A)$ is the product of $(S^1)^{k-s}$
 and an $(n-k+s)$-dimensional real Bott manifold $M(B')$. 
In particular,
if  $M(A)=T^{n-1}\mathop{\times}_{\ZZ_2} S^1$
then it is diffeomorphic to $(S^1)^{n-2} \times$ Klein bottle.
\end{remark}

\begin{remark}
From the submatrix \, $\mbox{\Large$\ast $}$ of $(\ref{newformA1})$, the group
$(\ZZ_2)^s=\langle g_{k-s+1},\dots,$ $g_k\rangle$ acts on
$T^k\times M(B)$ by
\begin{align}\label{Zs}
\begin{split}
&g_i(z_1,\dots,z_{k-s+1},\dots,z_k,[z_{k+1},\dots,z_n])\\&=(z_1,\dots,z_{k-s+1},\dots,-z_i,\dots,z_k,[\ti
z_{k+1},\dots,\ti z_n])
\end{split}
\end{align}
where $\ti z=\bar z$ or $z$. So there induces an action of
$(\ZZ_2)^s$ on $M(B)$ by
\begin{align}\label{inZs}
g_i([z_{k+1},\dots,z_n])=[\ti z_{k+1},\dots,\ti z_n].
\end{align}
\end{remark}

Moreover in \cite{K08},

\begin{theorem}[Structure]\label{T1}
Given a real Bott manifold $M(A)$, there exists a maximal $T^k$-action
$(k\geq 1)$ such that \[M(A)=T^k\mathop{\times}_{(\ZZ_2)^s}^{}
M(B)\] is an injective Seifert fiber space over the
$(n-k)$-dimensional real Bott orbifold $M(B)/(\ZZ_2)^s$;
\begin{equation}\label{injectiveSeifert}
T^k \to M(A)\to M(B)/(\ZZ_2)^s. \end{equation}
There is a central extension of the fundamental group $\pi(A)$ of $M(A)$:
\begin{equation}\label{Seifertgroup}
1\to \ZZ^k \to \pi(A)\to Q_B\to 1 \end{equation}such that
\begin{itemize}
\item[(i)] $\ZZ^k$ is the maximal central free abelian subgroup
\item[(ii)] The induced group $Q_B$ is  the semidirect product $\pi(B)\rtimes (\ZZ_2)^s$
        for which $\RR^{n-k}/\pi(B)=M(B)$.
\end{itemize}
\end{theorem}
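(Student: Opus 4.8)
The plan is to erect the whole statement on the reduced form $A''$ of \eqref{newformA1} obtained from the moves {\bf I}, {\bf II}, {\bf III}, and to identify the fiber lattice $\ZZ^k$ of the Seifert structure with the center of the Bieberbach group $\pi(A)$. Recall that $\pi(A)=\langle\tilde g_1,\dots,\tilde g_n\rangle$ acts on $\RR^n$ by euclidean motions $x\mapsto P_ix+\tfrac12 e_i$, where $e_1,\dots,e_n$ are the standard generators of the translation lattice $\ZZ^n\subset\pi(A)$ and the diagonal holonomy $P_i\in\mathrm{O}(n)$ carries a $-1$ in the $j$-th slot exactly when the $(i,j)$-entry of the matrix is $1$. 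First I would put the $T^k$-action on $M(A)=T^k\times_{(\ZZ_2)^s}M(B)$ by letting $T^k$ translate the first factor: this descends to the quotient because a translation of an $S^1$-coordinate commutes with the sign change $z_i\mapsto-z_i$ by which $(\ZZ_2)^s$ acts on $T^k$ in \eqref{Zs}, and it ignores the $M(B)$-coordinates entirely. Effectiveness of the $(\ZZ_2)^s$-action makes the generic orbit a full $T^k$, and the orbit map is the projection to $M(B)/(\ZZ_2)^s$, so \eqref{injectiveSeifert} is a Seifert fibering.

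The heart of the argument is the computation of the center of $\pi(A)$. A motion $x\mapsto Px+v$ commutes with every translation $e_i$ only if $Pe_i=e_i$ for all $i$, hence $P=I$; so the center lies in the translation lattice, and a translation by $v$ is central precisely when $v$ is fixed by the entire holonomy group. By the shape of $A''$ the first $k$ columns carry no off-diagonal $1$, whereas by \eqref{formA1} every later column has at least two nonzero entries; thus the holonomy fixes exactly the first $k$ coordinate directions. The center is therefore the full-rank lattice $\langle\tfrac12 e_1,\dots,\tfrac12 e_{k-s},e_{k-s+1},\dots,e_k\rangle\cong\ZZ^k$, free abelian of rank $k$ and, being the center, the maximal central free abelian subgroup, which gives (i). Invoking the correspondence between effective toral actions on closed aspherical manifolds and central tori in the fundamental group (the theory of injective Seifert fiberings of Conner--Raymond and Lee--Raymond), this rank-$k$ center is exactly $\pi_1$ of the typical fiber and injects into $\pi(A)$; this simultaneously yields maximality of $T^k$ and the injectivity of the fibering, and it makes the extension \eqref{Seifertgroup} central.

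Finally I would identify $Q_B=\pi(A)/\ZZ^k$ by reading off the generator images. For $j\le k-s$ the generator $\tilde g_j$ is the central half-translation $\tfrac12 e_j$ and so dies in $Q_B$; for $k-s<j\le k$ one computes $\tilde g_j^{\,2}=e_j\in\ZZ^k$, so these $s$ generators descend to commuting involutions spanning a $(\ZZ_2)^s$; and the images of $\tilde g_{k+1},\dots,\tilde g_n$ generate a faithful copy of $\pi(B)$, the projection being injective on this block since $\ZZ^k$ sits in the complementary $\RR^k$. These involutions conjugate $\pi(B)$ exactly by the induced action \eqref{inZs} and split $1\to\pi(B)\to Q_B\to(\ZZ_2)^s\to1$, so $Q_B\cong\pi(B)\rtimes(\ZZ_2)^s$ with $\RR^{n-k}/\pi(B)=M(B)$, which is (ii). The step I expect to be the real obstacle is not any single computation but the maximality claim: one must show that move {\bf I} truly extracts the largest identity block — equivalently that the holonomy-fixed subspace has dimension exactly $k$ and that no larger torus can act — and this rests on the structural correspondence between torus actions and the center rather than on a bare-hands estimate.
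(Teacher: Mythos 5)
The paper itself contains no proof of this theorem: it is quoted from the companion paper and the text says only ``See \cite{K08} for the proof,'' so there is no internal argument to compare yours against. What can be said is that your proof is correct and is the natural completion of the reduction the paper does carry out in Section \ref{S1}: you start from the same moves {\bf I}, {\bf II}, {\bf III} and the reduced form \eqref{newformA1}, and then supply the group theory the paper outsources — the center of $\pi(A)$ consists exactly of the holonomy-fixed pure translations, the block shape of $A''$ makes this a rank-$k$ lattice, Conner--Raymond theory converts the rank of the center into the maximal torus rank (giving maximality and injectivity simultaneously), and $Q_B=\pi(A)/\ZZ^k$ is identified generator-by-generator as $\pi(B)\rtimes(\ZZ_2)^s$. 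Two steps in your write-up should be made explicit rather than asserted. First, the property ``every column beyond the $k$-th has at least two nonzero entries'' is stated in the paper for $A'$ in \eqref{formA1}, but you apply it to $A''$; it does persist under the further moves (move {\bf II} alters no column, and move {\bf III} zeroes the $C$-part of one of two rows having \emph{identical} $C$-parts, so no column can lose its last off-diagonal $1$), and this persistence is precisely what forces the holonomy-fixed subspace to be $\RR^k$ and the center to have rank exactly $k$. Second, to pin the center down to $\langle\tfrac12 e_1,\dots,\tfrac12 e_{k-s},e_{k-s+1},\dots,e_k\rangle$, and to know that the images of $\tilde g_{k-s+1},\dots,\tilde g_k$ generate a genuine $(\ZZ_2)^s$ in $Q_B$, you must invoke effectiveness of the $(\ZZ_2)^s$-action on $M(B)$: without it, a nontrivial product of these generators could have trivial holonomy and collapse to a central half-translation. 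With those two remarks inserted, your argument is a complete and self-contained proof of the statement within the framework the paper sets up.
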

\noindent See \cite{K08} for the proof.\\

By this theorem, a real Bott manifold $M(A)$ which admits a maximal
$T^k$-action $(k\geq 1)$ can be created from an $(n-k)$-dimensional
real Bott manifold $M(B)$ by a $(\ZZ_2)^s$-action, and the
corresponding Bott matrix $A$ has the form as in (\ref{newformA1})
above.

\subsection{Affine maps between real Bott manifolds}\label{sec:Fixed-charac}

Next, we can apply the following theorem to check whether two
real Bott manifolds are diffeomorphic.

\begin{theorem}[Rigidity]\label{T2}
Let $M(A_1)$, $M(A_2)$ be $n$-dimensional real Bott manifolds and
$\displaystyle 1\to \ZZ^{k_i} \to \pi(A_i)\to Q_{B_i}\to 1$ be
the associated group extensions $(i=1,2)$. Then the following are
equivalent:
\begin{itemize}
\item[(i)] $\pi(A_1)$ is isomorphic to $\pi(A_2)$.
\item[(ii)] There exists an isomorphism of $Q_{B_1}=\pi(B_1)\rtimes (\ZZ_2)^{s_1}$ onto
            $Q_{B_2}=\pi(B_2)\rtimes (\ZZ_2)^{s_2}$ preserving $\pi(B_1)$ and $\pi(B_2)$.
\item[(iii)] The action $((\ZZ_2)^{s_1}, M(B_1))$ is equivariantly diffeomorphic to the action 
$((\ZZ_2)^{s_2}, M(B_2))$.
\end{itemize}
\end{theorem}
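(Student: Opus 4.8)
The plan is to prove the cycle (i) $\Rightarrow$ (ii) $\Rightarrow$ (iii) $\Rightarrow$ (i), using as the two engines Bieberbach's rigidity (any isomorphism of crystallographic groups is realized by conjugation by an affine map, \cite{Wolf}) and the canonical Seifert data of Theorem~\ref{T1}. The observation common to the whole argument is that by Theorem~\ref{T1}(i) the subgroup $\ZZ^{k_i}$ is the maximal central free abelian subgroup of $\pi(A_i)$, hence is characteristic. Consequently any isomorphism $\phi\colon\pi(A_1)\to\pi(A_2)$ satisfies $\phi(\ZZ^{k_1})=\ZZ^{k_2}$, which forces $k_1=k_2=:k$ and induces an isomorphism $\bar\phi\colon Q_{B_1}\to Q_{B_2}$ on the quotients.

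I would dispatch the two routine implications first. For (iii) $\Rightarrow$ (i), given an equivariant diffeomorphism $(\theta,f)$ of $((\ZZ_2)^{s_1},M(B_1))$ onto $((\ZZ_2)^{s_2},M(B_2))$, I choose a linear isomorphism $\Phi$ of $T^k$ intertwining through $\theta$ the two coordinate-flip $(\ZZ_2)^{s_i}$-actions on $T^k$ (possible since by \eqref{Zs} both are standard free flip actions on distinct coordinates); then $\Phi\times_\theta f$ is a fibre-preserving diffeomorphism $M(A_1)\to M(A_2)$, so $\pi(A_1)\cong\pi(A_2)$. The reverse direction (iii) $\Leftarrow$ (ii) reads an isomorphism $\psi\colon Q_{B_1}\to Q_{B_2}$ of crystallographic groups as an affine conjugation $\bar F$ on $\RR^{n-k}$; since $\psi$ preserves the $\pi(B_i)$, the map $\bar F$ descends to an affine diffeomorphism $M(B_1)\to M(B_2)$ equivariant for $Q_{B_1}/\pi(B_1)\to Q_{B_2}/\pi(B_2)$, which is (iii). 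Finally (iii) $\Rightarrow$ (ii) just reassembles an equivariant diffeomorphism as the semidirect-product isomorphism $f_*\rtimes\theta\colon Q_{B_1}\to Q_{B_2}$, which manifestly preserves the factors $\pi(B_i)$.

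The implication with real content is (i) $\Rightarrow$ (ii): I must show that the induced $\bar\phi\colon Q_{B_1}\to Q_{B_2}$ can be taken to carry $\pi(B_1)$ onto $\pi(B_2)$, i.e.\ that $\pi(B_i)$ is preserved. The mechanism I would use is that, because the torus action of Theorem~\ref{T1} is \emph{maximal}, the residual group $(\ZZ_2)^{s_i}$ cannot act freely on $M(B_i)$ (else the fibre could be enlarged), so it acts with fixed points; its generators therefore project to genuine torsion elements of $Q_{B_i}$ (pure reflections in the base directions), whereas $\pi(B_i)$, being a Bieberbach group, is torsion-free. Thus all torsion of $Q_{B_i}$ lies in the nontrivial $\pi(B_i)$-cosets, and the centrality of the extension \eqref{Seifertgroup} pins the splitting down further: a torsion element of $Q_{B_i}$ lifts to $\pi(A_i)$ with square in the central lattice $\ZZ^{k}$, and since $\phi$ preserves $\ZZ^{k}$ it matches exactly these distinguished elements. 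Realising $\pi(B_i)$ intrinsically as the maximal torsion-free normal subgroup cut out by this data makes it characteristic, whence $\bar\phi(\pi(B_1))=\pi(B_2)$.

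The main obstacle is precisely this last point. A crystallographic group can a priori possess several torsion-free normal subgroups sharing the same elementary abelian $2$-group quotient, so the equality $\bar\phi(\pi(B_1))=\pi(B_2)$ is not formal and must be forced by the special shape of a real Bott matrix. The delicate task is to upgrade the heuristic ``torsion sits in the nontrivial cosets, and the central squares detect the fibre flips'' into a genuine characteristic description of $\pi(B_i)$ inside $Q_{B_i}$, exploiting the upper-triangular $(0,1)$-structure of $B$ together with the maximality of $T^k$. Once that characterisation is in hand, the remaining steps are standard Bieberbach and Seifert bookkeeping.
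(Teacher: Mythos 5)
Your reduction to the implication (i)$\Rightarrow$(ii) and your treatment of the other arrows are essentially sound: $\ZZ^{k_i}$ is the full centre of the torsion-free group $\pi(A_i)$ (finitely generated and torsion-free, hence free abelian), so it is characteristic, giving $k_1=k_2$ and the induced $\bar\phi\colon Q_{B_1}\to Q_{B_2}$; (ii)$\Rightarrow$(iii) is Bieberbach's second theorem for the crystallographic groups $Q_{B_i}$ together with the hypothesis that $\pi(B_i)$ is preserved; and (iii)$\Rightarrow$(i) works because $\GL(k,\ZZ)\to\GL(k,\ZZ_2)$ is onto, so the isomorphism $\theta$ of $(\ZZ_2)^s$ can indeed be realized by a linear automorphism $\Phi$ of $T^k$ intertwining the two half-rotation actions, and $\Phi\times_\theta f$ descends to $M(A_1)\to M(A_2)$.

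The genuine gap is (i)$\Rightarrow$(ii), which carries the entire content of the theorem (it is what the rest of the paper uses, e.g.\ in Remark \ref{rem15} and throughout Section \ref{S2}), and your proposed mechanism cannot be completed as stated. First, \emph{every} nontrivial coset of $\pi(B_i)$ in $Q_{B_i}$ contains an involution: each $\alpha\in(\ZZ_2)^{s}$ acts on $T^{n-k}$ by coordinate conjugations, which always have fixed points; these descend to $M(B_i)$, and a lift of $\alpha$ to $Q_{B_i}$ fixing a point of $\RR^{n-k}$ has order two (its square lies in the freely acting $\pi(B_i)$). So ``torsion sits in the nontrivial cosets'' puts no constraint on which subgroup $\pi(B_i)$ is, and maximality of $T^k$ is irrelevant to this point. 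Second, ``lifts to $\pi(A_i)$ with square in $\ZZ^k$'' is precisely the statement that the image in $Q_{B_i}$ is torsion, so it is preserved by \emph{any} isomorphism and adds nothing. Third, ``torsion-free normal subgroup with elementary abelian $2$-quotient'' genuinely fails to single out $\pi(B_i)$: already for $M(A)=S^1\times_{\ZZ_2}T^2$, where $Q_B=\ZZ^2\rtimes\langle\sigma\rangle$ with $\sigma(x,y)=(-x,y)$ and $t_1,t_2$ the standard translations, both $\pi(B)=\langle t_1,t_2\rangle$ and the Klein bottle group $\langle t_1,t_2\sigma\rangle$ are torsion-free normal of index $2$; note also that $\pi(B)$ need not even contain the translation lattice of $Q_B$ (for $A_1$ of Section \ref{EXAMPEL}, $\tilde g_2\tilde g_3$ is a pure translation whose image in $Q_B$ lies outside $\pi(B)$), so it cannot be anchored to the characteristic translation subgroup either. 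What actually closes the gap --- and is the route the paper imports from \cite{K08} via \eqref{conj}, \eqref{matrixAE}, \eqref{finalform} --- is to apply Bieberbach's second theorem to $\pi(A_i)$ itself: realize $\phi$ as conjugation by an affine $g=(h,H)$; since $\phi(\ZZ^{k_1})=\ZZ^{k_2}$, $H$ preserves $\RR^k\times 0$, and since every coordinate direction of $0\times\RR^{n-k}$ lies in the $(-1)$-eigenspace of some holonomy element (each column of the blocks $C$ and $B$ has an off-diagonal $1$, by maximality of $I_k$), $H$ also preserves $0\times\RR^{n-k}$, hence is block diagonal. A direct computation with this normal form shows that $g$ conjugates $\{\gamma\in\pi(A_1): \text{the }\RR^k\text{-part of }\gamma\text{ lies in }\ZZ^{k_1}\}$ onto the corresponding subgroup of $\pi(A_2)$, i.e.\ the induced affine map $\bar g$ of $\RR^{n-k}$ carries $\pi(B_1)$ onto $\pi(B_2)$. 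Without an argument of this kind, your proof of the key implication is missing.
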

\noindent See \cite{K08} for the proof.
Here Bott matrices $A_1$ and $A_2$ are created from $B_1$ and $B_2$ respectively.\\

Note that two real Bott manifolds $M(A_1)$ and
$M(A_2)$
 are diffeomorphic if and only if $\pi(A_1)$ is isomorphic to $\pi(A_2)$
 by the Bieberbach theorem \cite{Wolf}. 
Moreover  by  Theorem \ref{T1} and  \ref{T2} we have,
\begin{remark}\label{rem15}
Let real Bott manifolds $M(A_i)=T^{k_i}\times_{(\ZZ_2)^{s_i}}M(B_i)$ $(i=1,2)$.
If $M(A_1)$ and $M(A_2)$ are diffeomorphic
 then the following hold.
\begin{itemize}
\item[(i)] $k_1=k_2. $
\item[(ii)]  $  M(B_1)$ and $M(B_2)$ are diffeomorphic.
\item[(iii)] $s_1=s_2$. 
\end{itemize}
\end{remark}
Therefore two real Bott manifolds which admit different maximal
$T^k$-action are not diffeomorphic.  
If they have the same maximal
$T^k$-action, then the quotients $((\ZZ_2)^{s_i},M(B_i))$ are
compared. 
If $M(B_1)$ is not diffeomorphic to $M(B_2)$ or $s_1\neq
s_2$, then $M(A_1)$ and $M(A_2)$ are not diffeomorphic.  
So our
task is to distinguish the $(\ZZ_2)^{s_i}$-action on $M(B_i)$ when
it is the case that $s_1=s_2=s$ and $M(B_1)$ is diffeomorphic to
$M(B_2)$.

\subsection{Type of fixed point set}\label{sec:Fixed type}

Note that from (\ref{inZs}), the action of $(\ZZ_2)^s$ on $M(B)$ is
defined by
\[\al[(z_1,\dots,z_{n-k})]=[\al(z_1,\dots,z_{n-k})]=[(\ti z_1,\dots,\ti
z_{n-k})]\]
for $\al\in(\ZZ_2)^s$ and $\ti z=z$ or $\bar z$. Since
$M(B)=T^{n-k}/(\ZZ_2)^{n-k}$, the action $\langle \al\rangle$ lifts
to a linear (affine) action on $T^{n-k}$ naturally:
\[\al(z_1,\dots,z_{n-k})=(\ti z_1,\dots,\ti z_{n-k}).\]
Then the fixed point set is characterized by the equation:
\[(\ti z_1,\dots,\ti z_{n-k})=g(z_1,\dots,z_{n-k})\]
for some $g\in(\ZZ_2)^{n-k}$. 
It is also an affine subspace of $T^{n-k}$. 
So the fixed point sets of $(\ZZ_2)^s$
are affine subspaces in $M(B)$.

Let $B$ be the Bott matrix as in \eqref{formA1}. By a repetition of
move {\bf I},  $B$ has the form
\begin{align}\label{matrixB}
B= \left(\begin{array}{ccccc}
I_2 & C_{23} & \dots & \dots & C_{2\ell} \\
    & I_3    & C_{34}& \dots & C_{3\ell} \\
    &        & \ddots&       & \dots     \\
    &  $\mbox{\Large$0 $}$      &       &  I_{\ell-1} & C_{(\ell-1)\ell} \\
    &        &       &             & I_{\ell}
\end{array}\right)
\end{align}
where rank$B$=n-k=rank$I_2$+\dots+rank$I_\ell$
and $I_i$ ($i=2,\dots,\ell$) is the identity matrix,
$C_{jt}$ ($j=2,\dots,\ell-1$, $t=3,\dots,\ell$) is a $p_j\times q_t$ matrix ($p_j$=rank$I_j$, $q_t$=rank$I_t)$.

Note that by the Bieberbach theorem (cf. \cite{Wolf}),
if $f$ is an isomorphism of $\pi(A_1)$ onto $\pi(A_2)$, then
 there exists an affine
element $g=(h,H)\in$A$(n)=\RR^n\rtimes \GL(n,\RR)$ such that
\begin{align}\label{conj}
f(r)=g r g^{-1} \,\,(\forall r\in\pi(A_1)).
\end{align}
Recall that if $M(A_1)$ is diffeomorphic to $M(A_2)$ then $M(B_1)$ is
diffeomorphic to $M(B_2)$. This implies that  $B_1$ and $B_2$
have the form as in \eqref{matrixB}.

Using  \eqref{conj} and according to the form of  $B$ in \eqref{matrixB}
we obtain that
\begin{align}\label{matrixAE}
g= \left(
\left( \begin{array}{c}
{\bf h}_1\\
{\bf h}_2\\
\vdots\\
{\bf h}_\ell
\end{array}\right),
\left(\begin{array}{cccc}
H_1 &     &  &  \\
    & H_2 &  & $\mbox{\Large$0 $}$ \\
 $\mbox{\Large$0 $}$    &     & \ddots &     \\
    &     &        &  H_{\ell}
\end{array}\right)\right)
\end{align}
where ${\bf h}_i$ is an $s_i\times1$ ($s_i$=rank $I_i$) column matrix
(${\bf h}_1$ is a $k\times1$ column matrix), $H_i\in$GL($s_i,\RR$) ($i=2,\dots,\ell$),
$H_1\in$GL$(k,\RR$)
(see Remark 3.2 \cite{K08}).

Let $\bar f\colon Q_{B_1}\to Q_{B_2}$ be the induced isomorphism from $f$ (\cf
Theorem \ref{T2}).
Now the affine equivalence
$\bar g\colon \RR^{n-k}\to \RR^{n-k}$ has the form
\begin{align}\label{finalform}
\bar  g= \left ( \left ( \begin{array}{c}
{\bf h}_2\\
\vdots\\
{\bf h}_\ell
\end{array}\right),
\left ( \begin{array}{ccc}
  H_2&         &$\mbox{\Large$0 $}$\\
     & \ddots &\\
  $\mbox{\Large$0 $}$   &         &H_\ell
\end{array}\right )\right)
\end{align}which is equivariant with respect to $\bar f$.
The pair $(\bar f,\bar g)$ induces an equivariant affine diffeomorphism
$(\hat f, \hat g)\colon ((\ZZ_2)^s,M(B_1))\to ((\ZZ_2)^s,$ $M(B_2)).$

Let ${\rm rank} H_i=b_i$ ($i=2,\dots,\ell$).
(Note that $b_2+\dots+b_\ell=n-k$.) 
Since
$\displaystyle M(B_1)=T^{n-k}/(\ZZ_2)^{n-k}$, $\bar g$ induces an affine map $\tilde g$ of
$T^{n-k}$. Put 
$$
X_{b_2}=
\begin{pmatrix}
x_1\\ \vdots \\ x_{b_2}
\end{pmatrix}, \dots,
X_{b_\ell}=
\begin{pmatrix}
x_{b_{\ell'}+1}\\ \vdots \\ x_{b_{\ell'}+b_\ell}
\end{pmatrix},
w_{b_i}=p(X_{b_i})\in T^{b_i}\, (i=2,\dots,\ell), \,
$$ $$
b_{\ell'}=b_2+\dots+b_{\ell-1}.
$$
Since $\tilde gp=p\bar g$, 
\begin{align}\label{preserve}
\tilde g(^tw_{b_2},\dots,{^tw_{b_\ell}})=(^tw'_{b_2},\dots,{^tw'_{b_\ell}})
\end{align}
where
$w'_{b_i}=p({\bf h}_i+H_iX_{b_i})\in T^{b_i}$.
That is, $\tilde g$ preserves each $T^{b_i}$ of $T^{n-k}= T^{b_2}\times \cdots\times T^{b_\ell}$,
so does 
$\hat g$ on
$$ M(B_1)=\lbrace{[z_1,\dots,z_{b_2};z_{b_2+1},\dots,z_{b_2+b_3};\dots \dots;
z_{b_{\ell'}+1},\dots,z_{b_{\ell'}+b_{\ell}}]\rbrace}.$$
We say that 
$\hat g$ \emph{preserves the type} $(b_2,\dots,b_{\ell})$ of
$M(B_1)$. As $\hat g$ is $\hat f$-equivariant, it also preserves the 
type corresponding to the fixed point sets between $((\ZZ_2)^s, M(B_1))$
and $((\ZZ_2)^s, M(B_2))$.

\begin{pro}\label{T5}
The $(\ZZ_2)^{s}$-action on $M(B)$ is distinguished by the number of
components and types of each positive dimensional fixed point
subsets.
\end{pro}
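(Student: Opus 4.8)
The plan is to prove that the fixed-point data---the number of connected components together with the type $(b_2,\dots,b_\ell)$ of each positive-dimensional fixed-point subset---forms a complete invariant of the $(\ZZ_2)^s$-action up to equivariant diffeomorphism, so that two such actions are equivariantly diffeomorphic if and only if these data agree.

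First I would dispose of the invariance direction, which is essentially contained in the discussion preceding the statement. Suppose $((\ZZ_2)^s, M(B_1))$ and $((\ZZ_2)^s, M(B_2))$ are equivariantly diffeomorphic. By the Bieberbach theorem \eqref{conj} and Theorem \ref{T2}, the equivalence is realized by an affine map $\hat g$ of the block-diagonal form \eqref{finalform}, and by \eqref{preserve} this $\hat g$ preserves the type decomposition $T^{b_2}\times\cdots\times T^{b_\ell}$. Being affine, $\hat g$ carries affine subspaces to affine subspaces, preserving both dimension and number of connected components; being $\hat f$-equivariant, it maps the fixed-point set of each $\al\in(\ZZ_2)^s$ onto the fixed-point set of $\hat f(\al)$. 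Hence the number of components and the type of each positive-dimensional fixed-point subset are preserved.

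For the converse I would reconstruct an equivariant affine map from matching fixed-point data. By \eqref{inZs} the action of each generator on $M(B)=T^{n-k}/(\ZZ_2)^{n-k}$ is determined by the pattern of conjugations $z_j\mapsto\bar z_j$ it performs on the coordinates, organized by block. Lifting to $T^{n-k}$, the fixed-point set is cut out by $(\ti z_1,\dots,\ti z_{n-k})=g(z_1,\dots,z_{n-k})$ for $g\in(\ZZ_2)^{n-k}$; an unconjugated coordinate contributes a free circle direction while a conjugated one contributes only the two points $\pm 1$ in that factor. Consequently the type of a fixed-point subset records, block by block, the number of coordinates left unconjugated, and its number of components records the number of conjugated coordinates. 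Reading this off for every element of $(\ZZ_2)^s$ recovers the conjugation pattern of the action on each block up to a permutation of the coordinates inside that block. Matching the two actions block by block then yields a block-diagonal integral matrix $H=\mathrm{diag}(H_2,\dots,H_\ell)$ of signed permutations and, after a suitable choice of translation part, an affine map $\bar g$ of the form \eqref{finalform} intertwining the two actions; by Theorem \ref{T2} the induced pair $(\hat f,\hat g)$ is the desired equivariant diffeomorphism.

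The main obstacle is this reconstruction step. The fixed-point set of a single element reveals only aggregate information---dimensions and component counts---so no single element suffices to identify which specific coordinates are conjugated. One must use the whole family $\{\,\text{fixed set of }\al : \al\in(\ZZ_2)^s\,\}$ simultaneously to disentangle the conjugation pattern, and then verify that a \emph{single} block permutation can be chosen to intertwine the two actions coherently for all generators at once, rather than merely element by element. Establishing this simultaneous compatibility, within the rigid block-diagonal signed-permutation form forced by \eqref{finalform}, is the crux of the argument.
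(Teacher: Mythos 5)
Your second paragraph, the invariance direction, is correct and is exactly the argument the paper intends: by Theorem \ref{T2} and Bieberbach rigidity \eqref{conj}, an equivariant diffeomorphism is realized by an affine pair $(\hat f,\hat g)$ whose lift has the block form \eqref{finalform}, and by \eqref{preserve} it preserves the type decomposition and carries the fixed set of $\al$ onto that of $\hat f(\al)$, preserving dimension, type, and number of components. Be aware, though, that the paper does not prove Proposition \ref{T5} in the text at all --- it defers the proof to \cite{K08}, with Subsection 1.3 supplying precisely this invariance discussion --- and that this invariance statement is the only content the proposition is ever asked to deliver: throughout the paper it is invoked exclusively in the contrapositive form ``different fixed-point data $\Rightarrow$ the actions are not equivariantly diffeomorphic.'' Whenever two actions are shown to \emph{be} equivalent, the paper never argues from equality of fixed-point data but instead exhibits an explicit equivariant diffeomorphism.

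The genuine gap is in your converse (``completeness'') direction, which your reading of ``distinguished by'' obliges you to prove and which you do not establish: you yourself flag as ``the crux'' the claim that a single block-diagonal signed permutation can be chosen to intertwine the two actions coherently for all elements of $(\ZZ_2)^s$ simultaneously, and this is left as an unproven assertion rather than an argument. The preparatory dictionary is also faulty. On the quotient $M(B)=T^{n-k}/(\ZZ_2)^{n-k}$ the fixed set of $\al$ is the image of the solution sets of $\al(z)=g(z)$ as $g$ ranges over \emph{all} of $(\ZZ_2)^{n-k}$, and the resulting affine subspaces merge and get identified in the quotient; compare the component lists in item {\bf b)} of Section \ref{EXAMPEL}, where for instance the two-dimensional components are $[z_3,1,z_5]$, $[1,z_4,z_5]$, $[{\bf i},z_4,z_5]$. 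So ``number of components $=$ number of conjugated coordinates'' is not correct, and the conjugation pattern of the action is not recovered block-by-block in the direct way you describe. As written, your attempt should either be restricted to the invariance direction --- in which case it is complete and agrees with the paper --- or the reconstruction step must actually be carried out, which is a substantial missing piece (and one that neither the paper nor its use of Proposition \ref{T5} requires).
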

\noindent See \cite{K08} for the proof.\\

\noindent {\bf Definition.} We say that two Bott matrices $A$ and
$A'$ are \textit {equivalent} (denoted by $A \sim A'$) if   $M(A)$
and $M(A')$ are diffeomorphic.

\section{Examples}\label{EXAMPEL}

We shall give
some real Bott manifolds in order to determine
 diffeomorphism classes of $5$-dimensional ones.
We introduce the following Bott matrices created from
$B=
\left(\begin{array}{ccc}
1 & 1 & 0\\
0 & 1 & 1\\
0 & 0 & 1
\end{array}\right)$. 
\begin{align*}
&A_1=
\left(\begin{array}{cc|ccc}
1& 0 & 1 & 1 & 0 \\
0& 1 & 0 & 1 & 0  \\
\hline
$\mbox{\Large$0 $}$& $\mbox{\Large$0 $}$ &  & $\mbox{\large$B $}$ & 
\end{array}\right),\,
A_2=
\left(\begin{array}{cc|ccc}
1& 0 & 0 & 1 & 0 \\
0& 1 & 1 & 0 & 0  \\
\hline
$\mbox{\Large$0 $}$& $\mbox{\Large$0 $}$ &  & $\mbox{\large$B $}$ & 
\end{array}\right),\\
&A_3=
\left(\begin{array}{cc|ccc}
1& 0 & 1 & 0 & 0 \\
0& 1 & 0 & 0 & 1  \\
\hline
$\mbox{\Large$0 $}$& $\mbox{\Large$0 $}$ &  & $\mbox{\large$B $}$ & 
\end{array}\right),\,
A_4=
\left(\begin{array}{cc|ccc}
1& 0 & 1 & 0 & 1 \\
0& 1 & 1 & 0 & 0  \\
\hline
$\mbox{\Large$0 $}$& $\mbox{\Large$0 $}$ &  & $\mbox{\large$B $}$ & 
\end{array}\right).
\end{align*}
Then we obtain the $5$-dimensional real Bott manifolds $M(A_i)$ for which
the $(\ZZ_2)^2$-action on $M(B)$
is given by the first two rows of $A_i$
$(i=1,2,3,4)$.
We prove that there are two distinct diffeomorhism classes 
among $M(A_i)$ $(i=1,2,3,4)$.
\begin{itemize}
\item[\bf a)] $M(A_1)$ is diffeomorphic to $M(A_2)$. 
        For this, the $(\ZZ_2)^2$-actions on $M(B)$ corresponding to 
          $A_1$ and $A_2$ are given as follows: 
  \begin{itemize}
  \item[(i)]    $ g_1([z_3,z_4,z_5])=[\bar z_3,\bar z_4,z_5]=[g_3(\bar z_3,\bar z_4,z_5)]=[-\bar z_3,z_4,z_5],\\
            g_2([z_3,z_4,z_5])=[z_3,\bar z_4,z_5],$
  \item[(ii)]   $ h_1([z_3,z_4,z_5])=[z_3,\bar z_4,z_5],\,h_2([z_3,z_4,z_5])=[\bar z_3,z_4,z_5]$.
  \end{itemize}
  There is an equivariant diffeomorphism
       $\varphi \colon((\ZZ_2)^2, M(B))\to ((\ZZ_2)^2,$ $ M(B))$  defined by
            $\varphi([z_3,z_4,z_5])=([{\bf i}z_3,z_4,z_5]) $
            such that $\varphi g_1=h_2\varphi $ and $\varphi g_2=h_1\varphi $.
        Hence the result follows from Theorem \ref{T2}.
\item[\bf b)] $M(A_2)$ is not diffeomorphic to $M(A_3)$.
If $M(A_2)$ and $M(A_3)$ are diffeomorphic,
by Theorem $\ref{T2}$ there is an equivariant diffeomorphism
$\varphi\colon((\ZZ_2)^{2},$ $M(B))\to ((\ZZ_2)^{2},M(B))$.
Let $\bar \varphi:\RR^{3}\rightarrow \RR^{3}$ be the lift of $\varphi$.
According to the form of $B$, the affine element $\bar \varphi$
has the form
\begin{align}\label{aff}
\bar \varphi=\left ( \left ( \begin{array}{c}
a_2\\
a_3\\
a_4
\end{array}\right ),
\left ( \begin{array}{ccc}
        1  & 0 & 0  \\
        0  & 1 & 0  \\
        0  & 0 & 1
\end{array}\right )\right)
\end{align}
for some $a_i\in\RR$ $(i=2,3,4)$ $($see \eqref{finalform}$)$.
Since $M(B)=T^{3}/(\ZZ_2)^{3}$, $\bar \varphi$ induces
an affine map of $T^{3}$. By the formula of $(\ref{aff})$, it preserves
each $S^1$ of $T^{3}= S^1\times S^1\times S^1$, so does $\varphi$
on $M(B)$. 
Since $\varphi$ is equivariant, it also preserves the type $(1,1,1)$
of the fixed point sets of $((\ZZ_2)^{2},M(B))$.
That is, if $[z_3,z_4,z_5]$ is a fixed point set of $((\ZZ_2)^2,M(B))$, then
$\varphi$ preserves each coordinate $z_i$ $(i=3,4,5)$ 
$($i.e.,  
$\varphi [z_3,z_4,z_5]=[exp(2\pi {\bf i}a_2)z_3,$ $exp(2\pi{\bf i} a_3)z_4,exp(2\pi{\bf i} a_4)z_5])$. 

The fixed point sets of $((\ZZ_2)^{2},M(B))$ corresponding
            to $A_2$ and $A_3$ are as follows:
 \begin{itemize}
   \item[(i)] $3$ components $T^2=\{[z_3,1,z_5],[1,z_4,z_5],
                                 [{\bf i},z_4,z_5]\}$, \\
            $4$ components   $S^1=\{[z_3,{\bf i},1],[{\bf i},1,z_5],
                                 [z_3,{\bf i},{\bf i}],[1,1,z_5]\}$,\\
             $4$ points $\{[{\bf i},{\bf i},1],[{\bf i},{\bf i},{\bf i}, [1,{\bf i},1],[1,{\bf i},{\bf i}]\}$,
   \item[(ii)] $3$ components $T^2=\{[z_3,z_4,{\bf i}],[1,z_4,z_5],
                                 [z_3,z_4,1]\}$, \\
            $4$ components $S^1=\{[1,z_4,1],[{\bf i},1,z_5],
                                 [1,z_4,{\bf i}],[{\bf i},{\bf i},z_5]\}$, \\ 
                                 and $4$ points 
                                 $\{[{\bf i},{\bf i},1],[{\bf i},{\bf i},{\bf i}, [{\bf i},1,1],[{\bf i},1,{\bf i}]\}$.
 \end{itemize}
 We see that the number of components of fixed point sets of $((\ZZ_2)^{2},$ $M(B))$
 corresponding to $A_2$ and $A_3$ is the same.   
Since the type of fixed point set is preserved, $\varphi$
maps $T^2=\{[z_3,1,z_5]\}$,  $z_3,z_5\in S^1$  ($($i$)$ in $A_2$) onto the fixed point set 
$T^2=\{[w_3,exp(2\pi{\bf i} a_3),w_5]\}$ $(w_3,w_5\in S^1)$ of $A_3$. 
However there is no type of such fixed point set in $($ii$)$ of $A_3$.  
Therefore by Proposition \ref{T5},  $M(A_2)$ and $M(A_3)$ are not diffeomorphic.

\item[\bf c)] $M(A_3)$ is diffeomorphic to $M(A_4)$.  In this case,
the $(\ZZ_2)^2$-actions on $M(B)$ corresponding to 
          $A_3$ and $A_4$ are given as follows:
  \begin{itemize}
  \item[(i)]    $ g_1([z_3,z_4,z_5])=[\bar z_3,z_4,z_5],\,
            g_2([z_3,z_4,z_5])=[z_3, z_4,\bar z_5],$
  \item[(ii)]   $ h_1([z_3,z_4,z_5])=[\bar z_3, z_4, \bar z_5],\,h_2([z_3,z_4,z_5])=[\bar z_3,z_4,z_5]$.
  \end{itemize}
  We change the generator $h_1$ by $h'_1$:
\[h'_1([z_3,z_4,z_5])=h_1h_2[z_3,z_4,z_5]=[z_3,z_4, \bar z_5].
\]
Define an equivariant diffeomorphism
       $\varphi \colon((\ZZ_2)^2, $ $M(B))\to ((\ZZ_2)^2,$ $ M(B))$ to be
            $\varphi([z_3,z_4,z_5])=([z_3,z_4,z_5]) $
            such that $\varphi g_1=h_2\varphi $ and $\varphi g_2=h'_1\varphi $.
        Hence $M(A_3)$ is diffeomorphic to $M(A_4)$ by Theorem \ref{T2}.
\end{itemize}

\section{Five-Dimensional Real Bott manifolds}\label{S2}

Before giving the classification of 5-dimensional real Bott manifolds, we recall
the classification of 2, 3, 4-dimensional ones
as stated in \cite{N08}, \cite{N08a}.

\begin{theorem}\label{dim-2}
The diffeomorphism classes of $2$-dimensional real Bott manifolds
consist of two. The corresponding Bott matrices  are as follows.
\begin{align*}
A_1=
\left(\begin{array}{cc}
1& 0 \\
0& 1
\end{array}\right),\,\, 
A_2= \left(\begin{array}{cc}
1& 1 \\
0& 1
\end{array}\right).
\end{align*}
\end{theorem}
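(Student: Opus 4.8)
The plan is to reduce the statement to two elementary facts: that there are exactly two Bott matrices of size $2$, and that the two resulting manifolds are the torus and the Klein bottle, which are not diffeomorphic. By the counting formula in the introduction, the number of Bott matrices of size $n$ is $2^{\frac12(n^2-n)}$, which for $n=2$ equals $2^1=2$; these are precisely the displayed $A_1=I_2$ and $A_2=\left(\begin{smallmatrix}1&1\\0&1\end{smallmatrix}\right)$, since a Bott matrix is upper triangular with diagonal $1$ and a single free off-diagonal entry. Hence the whole content of the theorem is the nondiffeomorphism $M(A_1)\not\cong M(A_2)$.

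First I would identify the two quotients explicitly using the action rule of Section~\ref{sec:Seifert}. For $A_1$ the two rows give the independent antipodal actions $g_1(z_1,z_2)=(-z_1,z_2)$ and $g_2(z_1,z_2)=(z_1,-z_2)$; since $S^1/(z\mapsto -z)\cong S^1$ the quotient splits as a product and $M(A_1)\cong T^2$. For $A_2$ the nonzero $(1,2)$-entry produces $g_1(z_1,z_2)=(-z_1,\bar z_2)$ and $g_2(z_1,z_2)=(z_1,-z_2)$, so the linear (holonomy) parts generate $\{I,\,\mathrm{diag}(1,-1)\}\cong\ZZ_2$ and $M(A_2)$ is a flat surface with nontrivial holonomy, i.e. the Klein bottle. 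This is exactly the case $n=2$ of the remark in Section~\ref{sec:Seifert} that $M(A)=T^{n-1}\times_{\ZZ_2}S^1$ is diffeomorphic to $(S^1)^{n-2}\times$ Klein bottle.

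To separate the two manifolds I would pass to the Bieberbach groups, since by the Bieberbach theorem $M(A_1)\cong M(A_2)$ if and only if $\pi(A_1)\cong\pi(A_2)$. Computing the lifts $\tilde g_i$ as in Section~\ref{sec:Seifert}, for $A_1$ one gets two commuting half-translations, so $\pi(A_1)\cong\ZZ^2$ is abelian. For $A_2$ one gets
\[
\tilde g_1(x_1,x_2)=(\tfrac12+x_1,-x_2),\qquad \tilde g_2(x_1,x_2)=(x_1,\tfrac12+x_2),
\]
and a direct check shows that $\tilde g_1\tilde g_2$ and $\tilde g_2\tilde g_1$ differ by a nonzero element of the translation lattice, so $\pi(A_2)$ is nonabelian. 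An abelian and a nonabelian group cannot be isomorphic, hence $M(A_1)$ and $M(A_2)$ are not diffeomorphic. (Alternatively one could invoke orientability, as $T^2$ is orientable and the Klein bottle is not, or compare the torsion in $H_1$.)

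Since this is the lowest-dimensional base case, there is essentially no obstacle: the only computation requiring care is the noncommutativity of $\tilde g_1$ and $\tilde g_2$ for $A_2$, where a sign slip in the linear part $x_2\mapsto -x_2$ would spoil the conclusion. I expect the genuine verification that $M(A_2)$ is the Klein bottle, i.e. that the single off-diagonal $1$ yields nontrivial $\ZZ_2$-holonomy, to be the conceptual crux, with everything else being enumeration.
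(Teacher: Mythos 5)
Your proposal is correct, but there is nothing in this paper to compare it against: Theorem \ref{dim-2} is not proved here, it is simply recalled from the author's earlier classification work (\cite{N08}, \cite{N08a}) as the base case feeding into the $5$-dimensional classification. Your argument is a complete, self-contained substitute. The enumeration is right ($2^{\frac12(n^2-n)}=2$ for $n=2$, giving exactly $A_1=I_2$ and $A_2$); the identifications $M(A_1)\cong T^2$ and $M(A_2)\cong$ Klein bottle agree with the Remark in Section \ref{sec:Seifert}, which states the case $M(A)=T^{n-1}\times_{\ZZ_2}S^1\cong (S^1)^{n-2}\times{\rm K}$; and distinguishing the two via fundamental groups is precisely the paper's own criterion, since by the Bieberbach theorem $M(A_1)$ and $M(A_2)$ are diffeomorphic if and only if $\pi(A_1)\cong\pi(A_2)$. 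Your key computation checks out: with $\tilde g_1(x_1,x_2)=(\tfrac12+x_1,-x_2)$ and $\tilde g_2(x_1,x_2)=(x_1,\tfrac12+x_2)$ one gets $\tilde g_1\tilde g_2(x_1,x_2)=(\tfrac12+x_1,-\tfrac12-x_2)$ versus $\tilde g_2\tilde g_1(x_1,x_2)=(\tfrac12+x_1,\tfrac12-x_2)$, which differ by the nontrivial lattice translation $(0,1)=\tilde g_2^2$; indeed $\tilde g_1\tilde g_2\tilde g_1^{-1}=\tilde g_2^{-1}$, so $\pi(A_2)$ is the nonabelian Klein bottle group while $\pi(A_1)\cong\ZZ^2$ is abelian. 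The orientability (or torsion in $H_1$) alternative you mention is equally valid. What your route buys is a verifiable proof of the base case from first principles; what the paper's route (citation plus its general machinery of Theorems \ref{T1}, \ref{T2} and Proposition \ref{T5}) buys is uniformity with the treatment of dimensions $3$, $4$, and $5$, where case-by-case identification of the manifolds as familiar spaces is no longer feasible.
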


\begin{theorem}\label{dim-3}
The diffeomorphism classes of $3$-dimensional real Bott manifolds
consist of four. The corresponding Bott matrices are classified 
into four equivalence classes as follows:
\begin{itemize}
\item[a)]
${
\left(\begin{array}{lcr}
1& 1 & 0\\
0& 1 & 1\\
0& 0 & 1
\end{array}\right),  \,
\left(\begin{array}{lcr}
1& 1 & 1\\
0& 1 & 1\\
0& 0 & 1
\end{array}\right)}.$
\item[b)]
${
\left(\begin{array}{lcr}
1& 1 & 1\\
0& 1 & 0\\
0& 0 & 1
\end{array}\right)}.$
\item[c)]
${
\left(\begin{array}{lcr}
1& 0 & 0\\
0& 1 & 1\\
0& 0 & 1
\end{array}\right),\,
\left(\begin{array}{lcr}
1& 0 & 1\\
0& 1 & 1\\
0& 0 & 1
\end{array}\right)},\,
\left(\begin{array}{lcr}
1& 0 & 1\\
0& 1 & 0\\
0& 0 & 1
\end{array}\right),\,
\left(\begin{array}{lcr}
1& 1 & 0\\
0& 1 & 0\\
0& 0 & 1
\end{array}\right).$
\item[d)] $I_3$.
\end{itemize}
\end{theorem}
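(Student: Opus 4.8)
The plan is to enumerate the $2^{\frac12(3^2-3)}=2^3=8$ Bott matrices of size $3$ and to sort them by the invariants provided by the structure theorem. For a given matrix $A=(a_{ij})$ I would first apply move {\bf I} repeatedly to reach the form \eqref{formA1}, which exhibits the maximal torus dimension $k$ together with the fiber $M(B)$, and then apply moves {\bf II} and {\bf III} to pass to the reduced form \eqref{newformA1} with its effective rank $s$. A convenient independent check on $k$ is that it equals the number of coordinates $x_i$ fixed by the linear parts of all three generators $\tilde g_1,\tilde g_2,\tilde g_3$: coordinate $1$ is always fixed, coordinate $2$ is fixed iff $a_{12}=0$, and coordinate $3$ is fixed iff $a_{13}=a_{23}=0$.

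Running through the eight matrices I expect the following outcome. Only $I_3$ has $k=3$, giving $M(A)=T^3$; this is item d). Exactly four matrices have $k=2$, and each reduces to $T^2\times_{\ZZ_2}S^1$ with fiber $M(B)=S^1$, that is to $S^1\times{\rm K}$; these form item c). The remaining three matrices have $k=1$: one of them is orientable and reduces to $S^1\times_{\ZZ_2}T^2$ with $M(B)=T^2$ (item b)), while the other two reduce to $S^1\times_{\ZZ_2}{\rm K}$ with $M(B)={\rm K}$ (item a)).

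Separating the four groups is then immediate from Remark \ref{rem15}. The value of $k$ distinguishes d), c), and the pair consisting of a) and b). Inside that pair a) and b) share $k=1$ and $s=1$, but their fibers $M(B)$ are not diffeomorphic (a Klein bottle versus a $2$-torus; equivalently one total space is non-orientable and the other orientable), so Remark \ref{rem15}(ii) forbids a diffeomorphism. It then remains to prove that the matrices listed within a single class do give diffeomorphic manifolds. For c) I would verify that all four reduce, after moves {\bf I}--{\bf III}, to the same datum, namely the conjugation $\ZZ_2$-action on $M(B)=S^1$; this action is unique up to equivariant diffeomorphism, so Theorem \ref{T2} collapses the four to the single class $S^1\times{\rm K}$ (and some of these identifications already come straight from move {\bf I}). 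For a) I would argue as in part c) of the Examples in Section \ref{EXAMPEL}: after replacing one generator of $\ZZ_2$ by a product of generators, the two $\ZZ_2$-actions on ${\rm K}$ are intertwined by an explicit affine diffeomorphism of $T^2$, and Theorem \ref{T2} identifies the two total spaces.

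The main obstacle is not the case bookkeeping but two finer points. First, one must compute $k$ correctly: applying move {\bf I} to one column can create a new movable column, so maximality of the identity block has to be checked, and it is easy to be misled because a column that is trivial inside $B$ need not be trivial in the full matrix $A'$ once the block $C$ is taken into account. The Betti-number criterion above is a safeguard against this. Second, classes a) and b) are not separated by the torus datum $(k,s)$ at all; the decisive invariant is the diffeomorphism type of the fiber $M(B)$ (or, if one prefers to stay with Proposition \ref{T5}, the number and type of the fixed-point components of the two $\ZZ_2$-actions). Once these are handled, the class sizes $2+1+4+1=8$ account for all Bott matrices of size $3$, producing exactly the four diffeomorphism classes a), b), c), d).
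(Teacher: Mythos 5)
Your proposal is correct and is in substance the paper's own method: the paper never proves Theorem \ref{dim-3} in this text (it is recalled from \cite{N08}, \cite{N08a}), but every step you take --- moves {\bf I}--{\bf III} to reach the reduced Seifert form $T^k\times_{(\ZZ_2)^s}M(B)$ with the correct maximal $k$ (your trivial-column/Betti-number count is a valid computation of it), Remark \ref{rem15} to separate classes with different $k$ or non-diffeomorphic fibers $M(B)$ (Klein bottle versus $T^2$ for a) versus b)), and explicit affine equivariant diffeomorphisms combined with Theorem \ref{T2} to merge the matrices inside one class --- is exactly the machinery this paper deploys for its $4$- and $5$-dimensional classifications and for Theorem \ref{Alg1}. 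One cosmetic slip: for class a) the acting group has $s=1$, so the generator-replacement trick of Example c) in Section \ref{EXAMPEL} (which needs at least two generators) is not the right model; the relevant one is Example a), namely rewriting $[\bar z_2,\bar z_3]=[-\bar z_2,z_3]$ via the deck transformation of ${\rm K}$ and conjugating by $[z_2,z_3]\mapsto[{\bf i}z_2,z_3]$, after which the identification of the two matrices in a) goes through as you claim.
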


\begin{theorem}\label{dim-4}
The diffeomorphism classes of $4$-dimensional real Bott manifolds
consist of twelve. The corresponding Bott matrices are classified 
into twelve equivalence classes as follows:
\begin{itemize}
\item[i)]
${
\left(\begin{array}{cccc}
1& 1 & 0 & 0 \\
0& 1 & 1 & 1\\
0& 0 & 1 & 0\\
0& 0 & 0 & 1
\end{array}\right),
\left(\begin{array}{cccc}
1& 1 & 1 & 1 \\
0& 1 & 1 & 1\\
0& 0 & 1 & 0\\
0& 0 & 0 & 1
\end{array}\right)}.$
\item[ii)]
${
\left(\begin{array}{cccc}
1& 1 & 0 & 1 \\
0& 1 & 1 & 1\\
0& 0 & 1 & 0\\
0& 0 & 0 & 1
\end{array}\right),
\left(\begin{array}{cccc}
1& 1 & 1 & 0 \\
0& 1 & 1 & 1\\
0& 0 & 1 & 0\\
0& 0 & 0 & 1
\end{array}\right)
}.$
\item[iii)]
${
\left(\begin{array}{cccc}
1& 1 & 0 & 0 \\
0& 1 & 1 & 0\\
0& 0 & 1 & 1\\
0& 0 & 0 & 1
\end{array}\right),
\left(\begin{array}{cccc}
1& 1 & 1 & 0 \\
0& 1 & 1 & 0\\
0& 0 & 1 & 1\\
0& 0 & 0 & 1
\end{array}\right),
\left(\begin{array}{cccc}
1& 1 & 0 & 0 \\
0& 1 & 1 & 1\\
0& 0 & 1 & 1\\
0& 0 & 0 & 1
\end{array}\right),
\left(\begin{array}{cccc}
1& 1 & 1 & 1 \\
0& 1 & 1 & 1\\
0& 0 & 1 & 1\\
0& 0 & 0 & 1
\end{array}\right)}.$
\item[iv)]
${ 
\left(\begin{array}{cccc}
1& 1 & 0 & 1 \\
0& 1 & 1 & 0\\
0& 0 & 1 & 1\\
0& 0 & 0 & 1
\end{array}\right),
\left(\begin{array}{cccc}
1& 1 & 1 & 1 \\
0& 1 & 1 & 0\\
0& 0 & 1 & 1\\
0& 0 & 0 & 1
\end{array}\right),
\left(\begin{array}{cccc}
1& 1 & 0 & 1 \\
0& 1 & 1 & 1\\
0& 0 & 1 & 1\\
0& 0 & 0 & 1
\end{array}\right),
\left(\begin{array}{cccc}
1& 1 & 1 & 0 \\
0& 1 & 1 & 1\\
0& 0 & 1 & 1\\
0& 0 & 0 & 1
\end{array}\right)}.$
\item[v)]
$
\left(\begin{array}{cccc}
1& 1 & 1 & 0 \\
0& 1 & 0 & 0\\
0& 0 & 1 & 1\\
0& 0 & 0 & 1
\end{array}\right),
\left(\begin{array}{cccc}
1& 1 & 1 & 0 \\
0& 1 & 0 & 1\\
0& 0 & 1 & 1\\
0& 0 & 0 & 1
\end{array}\right),
\left(\begin{array}{cccc}
1& 1 & 1 & 1 \\
0& 1 & 0 & 1\\
0& 0 & 1 & 1\\
0& 0 & 0 & 1
\end{array}\right),
\left(\begin{array}{cccc}
1& 1 & 1 & 0 \\
0& 1 & 0 & 1\\
0& 0 & 1 & 0\\
0& 0 & 0 & 1
\end{array}\right),
\\
\left(\begin{array}{cccc}
1& 1 & 1 & 1 \\
0& 1 & 0 & 1\\
0& 0 & 1 & 0\\
0& 0 & 0 & 1
\end{array}\right),
\left(\begin{array}{cccc}
1& 1 & 1 & 1 \\
0& 1 & 0 & 0\\
0& 0 & 1 & 1\\
0& 0 & 0 & 1
\end{array}\right),
{\left(\begin{array}{cccc}
1& 1 & 0 & 1 \\
0& 1 & 1 & 0\\
0& 0 & 1 & 0\\
0& 0 & 0 & 1
\end{array}\right),
\left(\begin{array}{cccc}
1& 1 & 1 & 1 \\
0& 1 & 1 & 0\\
0& 0 & 1 & 0\\
0& 0 & 0 & 1
\end{array}\right)}.$
\item[vi)]
${
\left(\begin{array}{cccc}
1& 1 & 1 & 1 \\
0& 1 & 0 & 0\\
0& 0 & 1 & 0\\
0& 0 & 0 & 1
\end{array}\right)}.$
\item[vii)]
${
\left(\begin{array}{cccc}
1& 0 & 0 & 1 \\
0& 1 & 1 & 0\\
0& 0 & 1 & 0\\
0& 0 & 0 & 1
\end{array}\right),
\left(\begin{array}{cccc}
1& 0 & 0 & 1 \\
0& 1 & 1 & 1\\
0& 0 & 1 & 0\\
0& 0 & 0 & 1
\end{array}\right),
\left(\begin{array}{cccc}
1& 0 & 1 & 0 \\
0& 1 & 0 & 1\\
0& 0 & 1 & 0\\
0& 0 & 0 & 1
\end{array}\right)},
\left(\begin{array}{cccc}
1& 0 & 1 & 1 \\
0& 1 & 0 & 1\\
0& 0 & 1 & 0\\
0& 0 & 0 & 1
\end{array}\right),
\\
\left(\begin{array}{cccc}
1& 0 & 1 & 0 \\
0& 1 & 1 & 1\\
0& 0 & 1 & 0\\
0& 0 & 0 & 1
\end{array}\right),
\left(\begin{array}{cccc}
1& 0 & 1 & 1 \\
0& 1 & 1 & 0\\
0& 0 & 1 & 0\\
0& 0 & 0 & 1
\end{array}\right),
{\left(\begin{array}{cccc}
1& 1 & 0 & 0 \\
0& 1 & 0 & 0\\
0& 0 & 1 & 1\\
0& 0 & 0 & 1
\end{array}\right),
\left(\begin{array}{cccc}
1& 1 & 0 & 1 \\
0& 1 & 0 & 0\\
0& 0 & 1 & 1\\
0& 0 & 0 & 1
\end{array}\right)}.$
\item[viii)]
${
\left(\begin{array}{cccc}
1& 0 & 0 & 0 \\
0& 1 & 1 & 1\\
0& 0 & 1 & 0\\
0& 0 & 0 & 1
\end{array}\right),
\left(\begin{array}{cccc}
1& 0 & 1 & 1 \\
0& 1 & 0 & 0\\
0& 0 & 1 & 0\\
0& 0 & 0 & 1
\end{array}\right),
\left(\begin{array}{cccc}
1& 0 & 1 & 1 \\
0& 1 & 1 & 1\\
0& 0 & 1 & 0\\
0& 0 & 0 & 1
\end{array}\right)},
{\left(\begin{array}{cccc}
1& 1 & 0 & 1 \\
0& 1 & 0 & 0\\
0& 0 & 1 & 0\\
0& 0 & 0 & 1
\end{array}\right)},
\\{
\left(\begin{array}{cccc}
1& 1 & 1 & 0 \\
0& 1 & 0 & 0\\
0& 0 & 1 & 0\\
0& 0 & 0 & 1
\end{array}\right)}.$
\item[ix)]
${
\left(\begin{array}{cccc}
1& 0 & 0 & 0 \\
0& 1 & 1 & 0\\
0& 0 & 1 & 1\\
0& 0 & 0 & 1
\end{array}\right),
\left(\begin{array}{cccc}
1& 0 & 0 & 0 \\
0& 1 & 1 & 1\\
0& 0 & 1 & 1\\
0& 0 & 0 & 1
\end{array}\right),
\left(\begin{array}{cccc}
1& 0 & 1 & 0 \\
0& 1 & 0 & 0\\
0& 0 & 1 & 1\\
0& 0 & 0 & 1
\end{array}\right)},
\left(\begin{array}{cccc}
1& 0 & 1 & 1 \\
0& 1 & 0 & 0\\
0& 0 & 1 & 1\\
0& 0 & 0 & 1
\end{array}\right),
\\
\left(\begin{array}{cccc}
1& 0 & 1 & 0 \\
0& 1 & 1 & 0\\
0& 0 & 1 & 1\\
0& 0 & 0 & 1
\end{array}\right),
\left(\begin{array}{cccc}
1& 0 & 1 & 1 \\
0& 1 & 1 & 1\\
0& 0 & 1 & 1\\
0& 0 & 0 & 1
\end{array}\right),
{\left(\begin{array}{cccc}
1& 1 & 0 & 0 \\
0& 1 & 0 & 1\\
0& 0 & 1 & 0\\
0& 0 & 0 & 1
\end{array}\right),
\left(\begin{array}{cccc}
1& 1 & 0 & 0 \\
0& 1 & 1 & 0\\
0& 0 & 1 & 0\\
0& 0 & 0 & 1
\end{array}\right)},
\\
{\left(\begin{array}{cccc}
1& 1 & 0 & 1 \\
0& 1 & 0 & 1\\
0& 0 & 1 & 0\\
0& 0 & 0 & 1
\end{array}\right),
\left(\begin{array}{cccc}
1& 1 & 1 & 0 \\
0& 1 & 1 & 0\\
0& 0 & 1 & 0\\
0& 0 & 0 & 1
\end{array}\right)}.$
\item[x)]
${
\left(\begin{array}{cccc}
1& 0 & 0 & 1 \\
0& 1 & 1 & 0\\
0& 0 & 1 & 1\\
0& 0 & 0 & 1
\end{array}\right),
\left(\begin{array}{cccc}
1& 0 & 0 & 1 \\
0& 1 & 1 & 1\\
0& 0 & 1 & 1\\
0& 0 & 0 & 1
\end{array}\right),
\left(\begin{array}{cccc}
1& 0 & 1 & 0 \\
0& 1 & 0 & 1\\
0& 0 & 1 & 1\\
0& 0 & 0 & 1
\end{array}\right),}
\left(\begin{array}{cccc}
1& 0 & 1 & 1 \\
0& 1 & 0 & 1\\
0& 0 & 1 & 1\\
0& 0 & 0 & 1
\end{array}\right),
\\
\left(\begin{array}{cccc}
1& 0 & 1 & 0 \\
0& 1 & 1 & 1\\
0& 0 & 1 & 1\\
0& 0 & 0 & 1
\end{array}\right),
\left(\begin{array}{cccc}
1& 0 & 1 & 1 \\
0& 1 & 1 & 0\\
0& 0 & 1 & 1\\
0& 0 & 0 & 1
\end{array}\right),
{\left(\begin{array}{cccc}
0& 1 & 0 & 0 \\
0& 1 & 0 & 1\\
0& 0 & 1 & 1\\
0& 0 & 0 & 1
\end{array}\right),
\left(\begin{array}{cccc}
1& 1 & 0 & 1 \\
0& 1 & 0 & 1\\
0& 0 & 1 & 1\\
0& 0 & 0 & 1
\end{array}\right)}.$
\item[xi)]
${
\left(\begin{array}{cccc}
1& 0 & 0 & 0 \\
0& 1 & 0 & 0\\
0& 0 & 1 & 1\\
0& 0 & 0 & 1
\end{array}\right),
\left(\begin{array}{cccc}
1& 0 & 0 & 0 \\
0& 1 & 0 & 1\\
0& 0 & 1 & 0\\
0& 0 & 0 & 1
\end{array}\right),
\left(\begin{array}{cccc}
1& 0 & 0 & 0 \\
0& 1 & 0 & 1\\
0& 0 & 1 & 1\\
0& 0 & 0 & 1
\end{array}\right)},
\left(\begin{array}{cccc}
1& 0 & 0 & 1 \\
0& 1 & 0 & 0\\
0& 0 & 1 & 0\\
0& 0 & 0 & 1
\end{array}\right),
\\
{\left(\begin{array}{cccc}
1& 0 & 0 & 1 \\
0& 1 & 0 & 0\\
0& 0 & 1 & 1\\
0& 0 & 0 & 1
\end{array}\right),
\left(\begin{array}{cccc}
1& 0 & 0 & 1 \\
0& 1 & 0 & 1\\
0& 0 & 1 & 0\\
0& 0 & 0 & 1
\end{array}\right),
\left(\begin{array}{cccc}
1& 0 & 0 & 1 \\
0& 1 & 0 & 1\\
0& 0 & 1 & 1\\
0& 0 & 0 & 1
\end{array}\right),
\left(\begin{array}{cccc}
1& 0 & 0 & 0 \\
0& 1 & 1 & 0\\
0& 0 & 1 & 0\\
0& 0 & 0 & 1
\end{array}\right)},
\\
\left(\begin{array}{cccc}
1& 1 & 0 & 0 \\
0& 1 & 0 & 0\\
0& 0 & 1 & 0\\
0& 0 & 0 & 1
\end{array}\right),
\left(\begin{array}{cccc}
1& 0 & 1 & 0 \\
0& 1 & 0 & 0\\
0& 0 & 1 & 0\\
0& 0 & 0 & 1
\end{array}\right),
\left(\begin{array}{cccc}
1& 0 & 1 & 0 \\
0& 1 & 1 & 0\\
0& 0 & 1 & 0\\
0& 0 & 0 & 1
\end{array}\right).$
\item[xii)] $I_4$.
\end{itemize}
\end{theorem}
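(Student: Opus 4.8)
The plan is to realize every $4$-dimensional real Bott manifold as a reduced matrix and sort the resulting list into diffeomorphism classes using only the structure and rigidity results of Section~\ref{S1}. Since a Bott matrix of size $4$ has $\binom{4}{2}=6$ free off-diagonal entries, there are $2^{6}=64$ such matrices, and the first task is to bring each one, by an iteration of moves \textbf{I}, \textbf{II}, \textbf{III}, to the canonical form \eqref{newformA1}. From this form one reads off the maximal torus rank $k$, the effective rank $s$, and the base Bott matrix $B$, so that $M(A)\cong T^{k}\times_{(\ZZ_2)^{s}}M(B)$ with $M(B)=\RR^{4-k}/\pi(B)$ a $(4-k)$-dimensional real Bott manifold and $1\le k\le 4$ by Theorem~\ref{T1}.

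First I would partition the $64$ matrices by the invariant triple $(k,[M(B)],s)$, which is forced to be preserved under diffeomorphism by Remark~\ref{rem15}. Identifying $[M(B)]$ up to diffeomorphism uses the lower-dimensional classifications already recorded in Theorems~\ref{dim-2} and \ref{dim-3}: for $k=4$ the base is a point and $M(A)=T^4$; for $k=3$ the base is $S^{1}$; for $k=2$ the base is either $T^{2}$ or the Klein bottle; and for $k=1$ the base is one of the four $3$-dimensional real Bott manifolds. This reduces the problem, within each fixed $(k,[M(B)],s)$, to classifying the $(\ZZ_2)^{s}$-actions on the single model $M(B)$.

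The decisive step is then to apply the Rigidity Theorem~\ref{T2}(iii): two manifolds in the same block are diffeomorphic precisely when their $(\ZZ_2)^{s}$-actions on $M(B)$ are equivariantly diffeomorphic. To decide this I would invoke Proposition~\ref{T5} and compute, for each action, the number of connected components together with the \emph{type} $(b_2,\dots,b_\ell)$ of every positive-dimensional fixed-point set, using the splitting $T^{4-k}=T^{b_2}\times\cdots\times T^{b_\ell}$ of Section~\ref{sec:Fixed type}. When two actions yield identical component counts and types, an explicit $\hat f$-equivariant affine diffeomorphism is constructed exactly as in the computations of Section~\ref{EXAMPEL}, proving the manifolds diffeomorphic; when some type or count differs, the same proposition proves them non-diffeomorphic. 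Running this comparison through all blocks and tallying the survivors gives $1$ class for $k=4$, $1$ for $k=3$, and the remaining ten split among $k=2$ and $k=1$, for a total of twelve, matching the listed representatives (i)--(xii).

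The hard part will be this last fixed-point bookkeeping for the low-rank cases, above all $k=1$ with $M(B)$ a $3$-dimensional real Bott manifold, where several distinct $\ZZ_2$-actions on the same $M(B)$ must be separated. Here the fixed loci can be $2$-tori, circles, or isolated points, and distinguishing them requires tracking not merely how many components occur in each dimension but the precise type of each component under the torus splitting. The non-equivalence of $M(A_2)$ and $M(A_3)$ in Section~\ref{EXAMPEL} --- detected only by the type of a single $T^{2}$ fixed component rather than by raw component counts --- is the prototype of the subtlety one must handle uniformly across all remaining blocks.
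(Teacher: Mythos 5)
Your proposal is correct and follows essentially the approach this paper itself prescribes: note that the paper never actually proves Theorem \ref{dim-4} here (it is recalled from \cite{N08}, \cite{N08a}), but the machinery you invoke --- reduction of all $2^{6}=64$ matrices to the Seifert form \eqref{newformA1} by moves \textbf{I}--\textbf{III}, the invariance of $(k,[M(B)],s)$ from Remark \ref{rem15}, Rigidity Theorem \ref{T2}(iii), and the comparison of component counts and types of fixed-point sets via Proposition \ref{T5}, supplemented by explicit equivariant affine diffeomorphisms as in Section \ref{EXAMPEL} when those invariants coincide --- is exactly how the paper carries out the parallel $5$-dimensional classification in Section \ref{S2}. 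Your block counts are also consistent with the listed representatives: one class for $k=4$ (xii), one for $k=3$ (xi), four for $k=2$ (vii)--(x), matching the $n=4$ case of Theorem \ref{Alg1}, and six for $k=1$ (i)--(vi), totalling twelve.
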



Using the classification results of Theorem \ref{dim-2}, \ref{dim-3},
\ref{dim-4}, we shall classify $5$-dimensional real Bott manifolds. 
\subsection{$S^1$-actions with $4$-dimensional quotients}\label{subs-S1} \par\

The Bott matrices of $M(A)$ admitting $S^1$-actions have the following form
\begin{align*}
{
\left(\begin{array}{c|cccc}
 1& 1    & a_{13}  & a_{14} & a_{15} \\
\hline
 \mbox{\Large{$0$}}&     & \mbox{\large{$B$}} &  &  
\end{array}\right)}
\end{align*}
where $a_{13}, a_{14},a_{15}\in \{0,1\}$. In this case $M(B)$ corresponds to the
Bott matrices $B$ in Theorem \ref{dim-4}. 
Taking the first Bott matrix from i) as $B$,
we consider the following Bott matrices.

\begin{align*}
{A_1=\left(\begin{array}{c|cccc}
 1& 1    &0  & 0 & 0 \\
\hline
 0& 1    &1  & 0 & 0 \\
 0& 0    &1  & 1 & 1 \\
 0& 0    &0  & 1 & 0 \\
 0& 0    &0  & 0 & 1
\end{array}\right)}\,
{
A_2=\left(\begin{array}{c|cccc}
 1& 1    &0  & 1 & 0 \\
\hline
0& 1    &1  & 0 & 0 \\
 0& 0    &1  & 1 & 1 \\
 0& 0    &0  & 1 & 0 \\
 0& 0    &0  & 0 & 1
\end{array}\right)}\,
{
A_3=\left(\begin{array}{c|cccc}
 1& 1    &0  & 1 & 1 \\
\hline
0& 1    &1  & 0 & 0 \\
 0& 0    &1  & 1 & 1 \\
 0& 0    &0  & 1 & 0 \\
 0& 0    &0  & 0 & 1
\end{array}\right)}.
\end{align*}
Then the fixed point sets of the $\ZZ_2$-actions on $M(B)$ corresponding to
$A_1$, $A_2$ and $A_3$ respectively are as follows:
(1) $T^3$, $T^2$, $4$ points,
(2) $2$ components $T^2$, $4$ components $S^1$,
(3) $T^2$, $4$ components $S^1$, $4$ points. 
(1), (2) and (3) have the different fixed point sets each other
so each $A_i$ ($i=1,2,3$) is not equivalent by Proposition \ref{T5}.


The following Bott matrices are created from the first Bott matrix in ii).
\begin{align*}
{
A_4=\left(\begin{array}{c|cccc}
 1& 1    &1  & 0 & 0 \\
\hline
 0& 1    &1  & 0 & 1 \\
 0& 0    &1  & 1 & 1 \\
 0& 0    &0  & 1 & 0 \\
 0& 0    &0  & 0 & 1
\end{array}\right)}\,
{
A_5=\left(\begin{array}{c|cccc}
 1& 1    &0  & 0 & 0 \\
\hline
 0& 1    &1  & 0 & 1 \\
 0& 0    &1  & 1 & 1 \\
 0& 0    &0  & 1 & 0 \\
 0& 0    &0  & 0 & 1
\end{array}\right)}\,
{
A_6=\left(\begin{array}{c|cccc}
 1& 1    &1  & 1 & 0 \\
\hline
 0& 1    &1  & 0 & 1 \\
 0& 0    &1  & 1 & 1 \\
 0& 0    &0  & 1 & 0 \\
 0& 0    &0  & 0 & 1
\end{array}\right)}.
\end{align*}
The fixed point sets of the $\ZZ_2$-actions on $M(B)$ corresponding to
 $A_4$, $A_5$ and $A_6$ are obtained as:
(1) $3$ components $T^2$, $4$ points,
(2) $T^3$, $4$ components $S^1$,
(3) $8$ components $S^1$. 
In view of the fixed points, similarly $A_i$ ($i=4,5,6$) 
are not equivalent to each other.

The following Bott matrices are created from the first Bott matrix in iii). 
\begin{align*}
\begin{split}
&{
A_7=\left(\begin{array}{c|cccc}
 1& 1    &0  & 0 & 0 \\
\hline
 0& 1    &1  & 0 & 0 \\
 0& 0    &1  & 1 & 0 \\
 0& 0    &0  & 1 & 1 \\
 0& 0    &0  & 0 & 1
\end{array}\right)} \,
A_8=\left(\begin{array}{c|cccc}
 1& 1    &0  & 1 & 0 \\
\hline
 0& 1    &1  & 0 & 0 \\
 0& 0    &1  & 1 & 0 \\
 0& 0    &0  & 1 & 1 \\
 0& 0    &0  & 0 & 1
\end{array}\right) \\
&{
A_9=\left(\begin{array}{c|cccc}
 1& 1    &0  & 0 & 1 \\
\hline
 0& 1    &1  & 0 & 0 \\
 0& 0    &1  & 1 & 0 \\
 0& 0    &0  & 1 & 1 \\
 0& 0    &0  & 0 & 1
\end{array}\right)} \, 
A_{10}=\left(\begin{array}{c|cccc}
 1& 1    &0  & 1 & 1 \\
\hline
 0& 1    &1  & 0 & 0 \\
 0& 0    &1  & 1 & 0 \\
 0& 0    &0  & 1 & 1 \\
 0& 0    &0  & 0 & 1
\end{array}\right).
\end{split}
\end{align*}
The fixed point sets of the $\ZZ_2$-actions on $M(B)$ corresponding 
to  $A_7$, $A_8$, $A_9$ and $A_{10}$ are as follows:
\begin{itemize}
\item[$(1)$] $T^3$, $T^2$, $S^1$, $2$ points,
\item[$(2)$] $2$ components $T^2=\{[1,z_3, 1,z_5],[{\bf i}, {\bf i},z_4,z_5]\}$, $3$ components\\
              $S^1= \{[ {\bf i}, 1, 1,$ $z_5],[ 1,z_3, {\bf i}, 1],$ $[ 1,z_3, {\bf i}, {\bf i}]\}$, 
            $2$ points=$\{[{\bf i}, 1, {\bf i}, 1],[ {\bf i}, 1, {\bf i},{\bf i}]\}$,
\item[$(3)$] $2$ components $T^2=\{[ 1,z_3,z_4, 1],[ 1,z_3,z_4, {\bf i}]\}$, $3$ components\\
              $S^1=\{[ {\bf i}, {\bf i},{\bf i},$ $z_5], [ {\bf i}, 1,z_4, 1],$ $[{\bf i}, 1,z_4, {\bf i}]\}$, 
            $2$ points=$\{[{\bf i}, {\bf i}, 1, 1],[{\bf i}, {\bf i}, 1,{\bf i}]\}$,
\item[$(4)$] $T^2$, $5$ components $S^1$, $2$ points.
\end{itemize}  
Note that the fixed point sets of  $(2)$ and $(3)$ coincide, but
the type of them are different. (Compare {\bf b)} in Section \ref{EXAMPEL} for the type
(1,1,1,1).) Hence $A_8$ and $A_9$ are not equivalent. 
As the fixed point sets (1), (4) and (2) (or (3)) are all different,
each $A_i$ $(i=7,8,9,10)$ is not equivalent.  


The following Bott matrices are created from the first Bott matrix in iv).  

\begin{align*}
\begin{split}
&{
A_{11}=\left(\begin{array}{c|cccc}
 1& 1    &0  & 0 & 0 \\
\hline
 0& 1    &1  & 0 & 1 \\
 0& 0    &1  & 1 & 0 \\
 0& 0    &0  & 1 & 1 \\
 0& 0    &0  & 0 & 1
\end{array}\right)} \, 
{
A_{12}=\left(\begin{array}{c|cccc}
 1& 1    &0  & 1 & 0 \\
\hline
 0& 1    &1  & 0 & 1 \\
 0& 0    &1  & 1 & 0 \\
 0& 0    &0  & 1 & 1 \\
 0& 0    &0  & 0 & 1
\end{array}\right)} \\
&{
A_{13}=\left(\begin{array}{c|cccc}
 1& 1    &0  & 0 & 1 \\
\hline
 0& 1    &1  & 0 & 1 \\
 0& 0    &1  & 1 & 0 \\
 0& 0    &0  & 1 & 1 \\
 0& 0    &0  & 0 & 1
\end{array}\right)} \, 
{
A_{14}=\left(\begin{array}{c|cccc}
 1& 1    &0  & 1 & 1 \\
\hline
 0& 1    &1  & 0 & 1 \\
 0& 0    &1  & 1 & 0 \\
 0& 0    &0  & 1 & 1 \\
 0& 0    &0  & 0 & 1
\end{array}\right)}.
\end{split}
\end{align*}
The fixed point sets of the $\ZZ_2$-actions on $M(B)$ corresponding
to 
$A_{11}$, $A_{12}$, $A_{13}$ and $A_{14}$ are as follows:
(1) $T^3$, $3$ components $S^1$, 2 points, 
(2) $T^2$, $5$ components $S^1$, 2 points, 
(3) $3$ components $T^2$, $S^1$, 2 points, 
(4) $2$ components $T^2$, $3$ components $S^1$, $2$ points.  
By Proposition \ref{T5}, $A_{i}$ ($i=11,12,13,14$) are not equivalent to each other.


The Bott matrices $A_{i}$ ($i=15,16,17,18$)  below are created 
from the first  Bott matrix in v) while  $A_{19}$ is created from the second
Bott matrix in v).  
\begin{align*}
\begin{split}
&{
A_{15}=\left(\begin{array}{c|cccc}
 1& 1    &0  & 0 & 0 \\
\hline
 0& 1    &1  & 1 & 0 \\
 0& 0    &1  & 0 & 0 \\
 0& 0    &0  & 1 & 1 \\
 0& 0    &0  & 0 & 1
\end{array}\right)} \,
{
A_{16}=\left(\begin{array}{c|cccc}
 1& 1    &0  & 1 & 0 \\
\hline
 0& 1    &1  & 1 & 0 \\
 0& 0    &1  & 0 & 0 \\
 0& 0    &0  & 1 & 1 \\
 0& 0    &0  & 0 & 1
\end{array}\right)} \\
&{
A_{17}=\left(\begin{array}{c|cccc}
 1& 1    &0  & 0 & 1 \\
\hline
 0& 1    &1  & 1 & 0 \\
 0& 0    &1  & 0 & 0 \\
 0& 0    &0  & 1 & 1 \\
 0& 0    &0  & 0 & 1
\end{array}\right)} \,
{
A_{18}=\left(\begin{array}{c|cccc}
 1& 1    &0  & 1 & 1 \\
\hline
 0& 1    &1  & 1 & 0 \\
 0& 0    &1  & 0 & 0 \\
 0& 0    &0  & 1 & 1 \\
 0& 0    &0  & 0 & 1
\end{array}\right)} \\
&{
A_{19}=\left(\begin{array}{c|cccc}
 1& 1    &1  & 0 & 0 \\
\hline
 0& 1    &1  & 1 & 0 \\
 0& 0    &1  & 0 & 1 \\
 0& 0    &0  & 1 & 1 \\
 0& 0    &0  & 0 & 1
\end{array}\right)}.
\end{split}
\end{align*}
The fixed point sets of the $\ZZ_2$-actions on $M(B)$  corresponding  to 
$A_{15}$, $A_{16}$, $A_{17}$, $A_{18}$ and $A_{19}$ are as follows:
(1) $T^3$, $2$ components $S^1$, $4$ points,
(2) $3$ components $T^2$, $2$ components $S^1$,
(3) $2$ components $T^2$, $2$ components $S^1$, $4$ point,
(4) $T^2$, $6$ components $S^1$,
(5) $2$ components $T^2$,
    $4$ components $S^1$.  
By Proposition \ref{T5}, 
$A_{i}$ ($i=15,16,17,18,19$) are not equivalent to each other.


The following Bott matrices are created from the Bott matrix  vi). 
\begin{align*}
{
A_{20}=\left(\begin{array}{c|cccc}
 1& 1    &0  & 0 & 0 \\
\hline
 0& 1    &1  & 1 & 1 \\
 0& 0    &1  & 0 & 0 \\
 0& 0    &0  & 1 & 0 \\
 0& 0    &0  & 0 & 1
\end{array}\right)} \, 
{
A_{21}=\left(\begin{array}{c|cccc}
 1& 1    &1  & 0 & 0 \\
\hline
 0& 1    &1  & 1 & 1 \\
 0& 0    &1  & 0 & 0 \\
 0& 0    &0  & 1 & 0 \\
 0& 0    &0  & 0 & 1
\end{array}\right)}.
\end{align*}
The fixed point sets of the $\ZZ_2$-actions on $M(B)$  corresponding to
$A_{20}$ and $A_{21}$ are as follows:
(1) $T^3$, $8$ points,
(2) $2$ components $T^2$, $4$ components $S^1$. 
By Proposition \ref{T5}, $A_{20}$ and  $A_{21}$ are not equivalent.


The Bott matrix 
\begin{align*}
{
A_{22}=\left(\begin{array}{c|cccc}
 1& 1    &1  & 0 & 0 \\
\hline
 0& 1    &0  & 0 & 1 \\
 0& 0    &1  & 1 & 0 \\
 0& 0    &0  & 1 & 0 \\
 0& 0    &0  & 0 & 1
\end{array}\right)}
\end{align*}
is created from the first  Bott matrix  in vii). 


The following Bott matrices are created from the first Bott matrix  in viii). 
\begin{align*}
{
A_{23}=\left(\begin{array}{c|cccc}
 1& 1    &1  & 0 & 0 \\
\hline
 0& 1    &0  & 0 & 0 \\
 0& 0    &1  & 1 & 1 \\
 0& 0    &0  & 1 & 0 \\
 0& 0    &0  & 0 & 1
\end{array}\right)} \, 
{
A_{24}=\left(\begin{array}{c|cccc}
 1& 1    &1  & 1 & 0 \\
\hline
 0& 1    &0  & 0 & 0 \\
 0& 0    &1  & 1 & 1 \\
 0& 0    &0  & 1 & 0 \\
 0& 0    &0  & 0 & 1
\end{array}\right)}.
\end{align*}
The fixed point sets of the $\ZZ_2$-actions on $M(B)$  corresponding to 
 $A_{23}$ and $A_{24}$ are as follows:
(1) $2$ components $T^2$, $8$ points,
(2) $8$ components $S^1$. 
By Proposition \ref{T5}, $A_{23}$ is not equivalent to  $A_{24}$.


The following Bott matrices are created from the first Bott matrix  in ix).  
\begin{align*}
{
A_{25}=\left(\begin{array}{c|cccc}
 1& 1    &1  & 0 & 0 \\
\hline
 0& 1    &0  & 0 & 0 \\
 0& 0    &1  & 1 & 0 \\
 0& 0    &0  & 1 & 1 \\
 0& 0    &0  & 0 & 1
\end{array}\right)} \, 
{
A_{26}=\left(\begin{array}{c|cccc}
 1& 1    &1  & 0 & 1 \\
\hline
 0& 1    &0  & 0 & 0 \\
 0& 0    &1  & 1 & 0 \\
 0& 0    &0  & 1 & 1 \\
 0& 0    &0  & 0 & 1
\end{array}\right)}.
\end{align*}
The fixed point sets of the $\ZZ_2$-actions on $M(B)$  corresponding to
 $A_{25}$ and  $A_{26}$ are as follows:
(1) $2$ components $T^2$, $2$ components $S^1$, $4$ points,
(2) $6$ components $S^1$, $4$ points. 
By Proposition \ref{T5}, $A_{25}$ and $A_{26}$ are not equivalent.


The Bott matrix $A_{27}$ (resp. $A_{28}$) below is created from the first  Bott matrix  in 
x) (resp.  xi)).  
\begin{align*}
{
A_{27}=\left(\begin{array}{c|cccc}
 1& 1    &1  & 0 & 0 \\
\hline
 0& 1    &0  & 0 & 1 \\
 0& 0    &1  & 1 & 0 \\
 0& 0    &0  & 1 & 1 \\
 0& 0    &0  & 0 & 1
\end{array}\right)} \,
{
A_{28}=\left(\begin{array}{c|cccc}
 1& 1    &1  & 1 & 0 \\
\hline
 0& 1    &0  & 0 & 0 \\
 0& 0    &1  & 0 & 0 \\
 0& 0    &0  & 1 & 1 \\
 0& 0    &0  & 0 & 1
\end{array}\right)}.
\end{align*}

Finally from $I_4$, we get
$A_{29}=\left(\begin{array}{c|cccc}
 1& 1    &1  & 1 & 1 \\
\hline
 $\mbox{\Large$0 $}$&     &$\mbox{\large$I_4 $}$  &  &  
\end{array}\right)$.\\

Since each Bott matrix $B$ of i) to xii) is not equivalent to
each other, the resulting Bott matrix $A$ is not equivalent. 
Totally, 29 Bott matrices  $A_{i}$ ($i=1,\dots,29$)
 are not equivalent to each other. 
 When we take the second Bott matrix $B'$ from i),
the resulting Bott matrix $A'$  gives an action $(\ZZ_2,M(B'))$.
We can check that $(\ZZ_2,M(B'))$ is equivariantly diffeomorphic to 
one of the actions $(\ZZ_2,M(B))$ corresponding to $A_1$, $A_2$, $A_3$ by the ad hoc argument. (Compare Section \ref{EXAMPEL}
for the argument to find an equivariant diffeomorphism.)
Once there exists such an equivariant diffeomorphism, $A'$ is equivalent to one of 
$A_1$, $A_2$, $A_3$ by Theorem \ref{T2}. 
Similarly, if $A'$ is another Bott matrix created from the first Bott matrix in i), 
we can check that the corresponding 
$(\ZZ_2,M(B))$ is equivariantly diffeomorphic to one of the actions $(\ZZ_2,M(B))$ 
corresponding to $A_1,A_2,A_3$.  
(Note that the total number of Bott matrices created from the first Bott matrix in i) is 8.)
This argument works not only the case i) but also the 
cases from ii) to xii). As a consequence the Bott matrix $A'$ created from
Bott matrices from ii) to xii) is equivalent to one of $A_i$'s $(i=4,\dots,29)$. 
In summary, we obtain the following but the proof is omitted because of a
 tedious argument. 
\begin{lemma}\label{lemS1}
A Bott matrix created from any one of Bott matrices of Theorem \ref{dim-4} 
is equivalent to 
one of the Bott matrices $A_{i}$ $(i=1,\dots,29)$ above.
\end{lemma}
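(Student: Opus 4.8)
The plan is to show that every Bott matrix $A$ arising as an $S^1$-extension of a $4$-dimensional real Bott manifold $M(B)$ — that is, every matrix of the form
\begin{align}\label{planform}
A=\left(\begin{array}{c|cccc}
1 & 1 & a_{13} & a_{14} & a_{15}\\
\hline
\mbox{\Large$0$} & & \mbox{\large$B$} & &
\end{array}\right),
\end{align}
with $B$ one of the twelve representatives listed in Theorem \ref{dim-4} and $(a_{13},a_{14},a_{15})\in\{0,1\}^3$ — is equivalent to one of the $29$ representatives $A_1,\dots,A_{29}$. The first observation is that by Remark \ref{rem15}, two such $M(A)$ can only be diffeomorphic if the underlying $M(B)$'s agree; hence it suffices to work within each of the twelve classes i)--xii) separately, and across classes the representatives are automatically inequivalent. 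Thus the whole proof reduces to twelve independent finite problems.

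Within a fixed class, I would proceed in two stages. \emph{First}, for each choice of $B$ (there may be several Bott matrices in the class, but they already represent diffeomorphic $M(B)$) one must enumerate the at most $2^3=8$ row-vectors $(a_{13},a_{14},a_{15})$, read off the induced $\ZZ_2$-action on $M(B)$ via formula \eqref{inZs}, and compute its fixed-point data. \emph{Second}, these actions must be sorted into equivariant-diffeomorphism classes. The positive half of the argument — proving that two given $A$'s \emph{are} equivalent — follows the ad hoc pattern already illustrated in Section \ref{EXAMPEL}: one exhibits an explicit equivariant affine diffeomorphism $\varphi\colon((\ZZ_2),M(B))\to((\ZZ_2),M(B))$ of the shape $[z_3,z_4,z_5,z_6]\mapsto[\epsilon_3 z_3,\dots]$ (possibly after replacing a generator as in part {\bf c}), whence $A\sim A'$ by Theorem \ref{T2}. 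The negative half — proving inequivalence — is exactly what the fixed-point computations in subsection \ref{subs-S1} already carry out: by Proposition \ref{T5} two actions with different numbers of components, different dimensions, or different \emph{types} (in the sense of \eqref{preserve}) of their fixed-point subsets cannot be equivariantly diffeomorphic, so the corresponding $A$'s are inequivalent.

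The main obstacle is the sheer bookkeeping together with one genuine subtlety: Proposition \ref{T5} distinguishes actions by the number and dimension of fixed-point components, but in the degenerate cases where two actions have \emph{identical} numerical fixed-point data, one must invoke the finer \emph{type} invariant $(b_2,\dots,b_\ell)$ preserved by $\hat g$ (cf. \eqref{preserve} and the argument of part {\bf b}). Sorting out these coincidences — such as the pair $A_8,A_9$ whose fixed point sets coincide as subsets but carry different types, forcing a type-based rather than count-based distinction — is where care is required. Once each of the twelve classes is resolved by this count-then-type procedure and the surviving representatives are matched against $A_1,\dots,A_{29}$, the lemma follows; I would therefore present the twelve cases schematically and, as the authors do, suppress the routine verifications, since the substantive content is entirely captured by Theorem \ref{T2} (for equivalences) and Proposition \ref{T5} together with the type invariant (for inequivalences).
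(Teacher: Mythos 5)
Your proposal takes essentially the same approach as the paper: the paper likewise reduces to the twelve classes of Theorem \ref{dim-4} via Remark \ref{rem15}, distinguishes the resulting $\ZZ_2$-actions by fixed-point counts together with the finer type invariant (Proposition \ref{T5}), matches every remaining matrix to one of $A_1,\dots,A_{29}$ by exhibiting ad hoc equivariant diffeomorphisms and invoking Theorem \ref{T2}, and then suppresses the case-by-case bookkeeping as ``tedious,'' exactly as you do. There is no substantive difference and no gap beyond the routine verifications that both you and the paper omit.
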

\begin{pro}\label{proS1}
There are $29$ diffeomorphism classes of the case $S^1$-actions with $4$-dimensional
quotients.
\end{pro}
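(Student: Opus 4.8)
The proof of Proposition~\ref{proS1} proceeds in two stages, each already prepared by the preceding development. The plan is first to establish that the twenty-nine explicitly listed Bott matrices $A_1,\dots,A_{29}$ are pairwise non-equivalent, and then to invoke Lemma~\ref{lemS1} to conclude that every Bott matrix arising from an $S^1$-action with a $4$-dimensional quotient is equivalent to exactly one of them. For the first stage, I would organize the twenty-nine matrices according to which of the twelve equivalence classes i)--xii) of Theorem~\ref{dim-4} the base $M(B)$ belongs to. By Remark~\ref{rem15}(ii), if $M(A)$ and $M(A')$ are diffeomorphic then their bases $M(B)$ and $M(B')$ must be diffeomorphic; since the twelve classes of $4$-dimensional real Bott manifolds are mutually non-diffeomorphic, two matrices $A_i, A_j$ built over non-equivalent bases $B$ can never be equivalent. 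This reduces the whole problem to comparing, within each fixed base class, the finitely many listed representatives.

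Within a single base class, the discrimination is carried out entirely by Proposition~\ref{T5}: the $\mathbb{Z}_2$-action on $M(B)$ is determined up to equivariant diffeomorphism by the number of connected components of its positive-dimensional fixed-point subsets together with the \emph{type} $(b_2,\dots,b_\ell)$ of each component, as defined via \eqref{preserve}. For each $A_i$ the first row of the matrix prescribes a $\mathbb{Z}_2$-action $z_j \mapsto \tilde z_j$ on $M(B)$, and I would compute its fixed-point set by solving $(\tilde z_3,\dots,\tilde z_n) = g(z_3,\dots,z_n)$ for $g \in (\mathbb{Z}_2)^{n-k}$ as in Subsection~\ref{sec:Fixed type}. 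The resulting data---already tabulated in the excerpt for each group of matrices---then lets me read off non-equivalence directly whenever the component counts or dimensions differ (e.g. $A_1,A_2,A_3$ over class i) give $(T^3, T^2, 4\text{ pts})$ versus $2\,T^2{+}4\,S^1$ versus $(T^2, 4\,S^1, 4\text{ pts})$).

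The subtle cases are those where two actions over the same base have numerically identical fixed-point data, so that the raw component counts do not separate them; here I must appeal to the finer \emph{type} invariant. The principal example is the pair $A_8, A_9$ over class iii): both yield $2$ components $T^2$, $3$ components $S^1$, and $2$ points, yet the $T^2$-components carry different types $(b_2,\dots,b_\ell)$ relative to the splitting $T^{n-k}=T^{b_2}\times\cdots\times T^{b_\ell}$ forced by the block-diagonal form \eqref{finalform} of the affine conjugator $\bar g$. As established after \eqref{preserve}, any equivariant affine diffeomorphism $\hat g$ must preserve this type, so matching fixed-point sets with mismatched types cannot be realized by an equivalence---this is exactly the mechanism used in part \textbf{b)} of Section~\ref{EXAMPEL} for the type $(1,1,1)$, and it applies verbatim here. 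I expect this type-preservation argument to be the main obstacle, since it requires carefully identifying the block structure of $\bar g$ from the shape of $B$ in \eqref{matrixB} and verifying that the candidate fixed tori lie in incompatible coordinate blocks.

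Once all twenty-nine are shown pairwise non-equivalent, the reverse inclusion is immediate from Lemma~\ref{lemS1}, which asserts that any Bott matrix created from a matrix of Theorem~\ref{dim-4} is equivalent to one of $A_1,\dots,A_{29}$. Combining the two directions, the diffeomorphism classes of real Bott manifolds of this form are in bijection with $\{A_1,\dots,A_{29}\}$, giving exactly $29$ classes and completing the proof.
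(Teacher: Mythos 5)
Your proposal follows the paper's own argument essentially verbatim: distinguish matrices over non-equivalent bases via Remark \ref{rem15}, distinguish matrices over the same base by the component counts and dimensions of fixed-point sets via Proposition \ref{T5}, handle the coinciding-data pair $A_8, A_9$ by the type-preservation mechanism of part \textbf{b)} in Section \ref{EXAMPEL}, and then invoke Lemma \ref{lemS1} for completeness. This is correct and matches the paper's proof in both structure and the key discriminating invariants.
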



\subsection{$T^2$-actions with $3$-dimensional quotients}\par\

The Bott matrices of $M(A)$ admitting $T^2$-actions have the following
form
\begin{align*}
{
\left(\begin{array}{c|c}
 \mbox {\large{$I_2$}} & \mbox {\large{$\ast $}} \\
\hline
\mbox {\Large{0}} & \mbox {\large{B}}\\
\end{array}\right)}.
\end{align*}


The following Bott matrices are created from the first Bott matrix
$B$ of a) in Theorem \ref{dim-3}. 
\begin{align*}
&{
A_{30}=\left(\begin{array}{cc|ccc}
 1& 0    &0  & 0 & 0 \\
 0& 1    &1  & 0 & 0 \\
\hline
 0& 0    &1  & 1 & 0 \\
 0& 0    &0  & 1 & 1 \\
 0& 0    &0  & 0 & 1
\end{array}\right)}\,
{
A_{31}=\left(\begin{array}{cc|ccc}
 1& 0    &0  & 0 & 0 \\
 0& 1    &1  & 0 & 1 \\
\hline
 0& 0    &1  & 1 & 0 \\
 0& 0    &0  & 1 & 1 \\
 0& 0    &0  & 0 & 1
\end{array}\right)} \\
&{
A_{32}=\left(\begin{array}{cc|ccc}
 1& 0    &0  & 1 & 0 \\
 0& 1    &1  & 0 & 0 \\
\hline
 0& 0    &1  & 1 & 0 \\
 0& 0    &0  & 1 & 1 \\
 0& 0    &0  & 0 & 1
\end{array}\right)}\,
{
A_{33}=\left(\begin{array}{cc|ccc}
 1& 0    &1  & 0 & 1 \\
 0& 1    &0  & 1 & 0 \\
\hline
 0& 0    &1  & 1 & 0 \\
 0& 0    &0  & 1 & 1 \\
 0& 0    &0  & 0 & 1
\end{array}\right)} \\
&{
A_{34}=\left(\begin{array}{cc|ccc}
 1& 0    &1  & 0 & 0 \\
 0& 1    &0  & 0 & 1 \\
\hline
 0& 0    &1  & 1 & 0 \\
 0& 0    &0  & 1 & 1 \\
 0& 0    &0  & 0 & 1
\end{array}\right)}\,
{
A_{35}=\left(\begin{array}{cc|ccc}
 1& 0    &0  & 1 & 1 \\
 0& 1    &1  & 0 & 0 \\
\hline
 0& 0    &1  & 1 & 0 \\
 0& 0    &0  & 1 & 1 \\
 0& 0    &0  & 0 & 1
\end{array}\right)}.
\end{align*}
The fixed point sets of the $(\ZZ_2)^s$-actions ($s=1,2$) on $M(B)$  corresponding to
$A_{30}$, $A_{31}$, $A_{32}$, $A_{33}$, $A_{34}$ and $A_{35}$ are as follows:
(1) $T^2$, $S^1$, $2$ points, 
(2) $3$ components $S^1$, $2$ points, 
(3) $3$ components $T^2$,  
        $4$ components   $S^1$,  
                                 $4$ points,    
(4) $T^2$, $8$ components $S^1$, $4$ points, 
(5) $3$ components $T^2$, 
        $4$ components $S^1$, 
        $4$ points,   
(6) $2$ components $T^2$, $6$ components $S^1$, $4$ points. 
Compared (3) with (5), 
we see from {\bf b)} in Section  \ref{EXAMPEL}
that $A_{32}$ is not equivalent to $A_{34}$ .  
By Proposition \ref{T5}, Bott matrices 
$A_{i}$ ($i=30,31$) (resp. $A_{j}$ ($j=32,33,34,35$)) are not equivalent to each other. 
Moreover, by Remark \ref{rem15}, Bott matrices $A_{i}$ ($i=30,31$)  are not equivalent to $A_{j}$ ($j=32,33,34,35$) 
because the $(\ZZ_2)^2$-action corresponding to  $A_{j}$ ($j=32,33,34,35$) 
cannot be reduced to a $\ZZ_2$-action.

The following Bott matrices are created from the Bott matrix  b) in
Theorem \ref{dim-3}.
\begin{align*}
\begin{split}
&{
A_{36}=\left(\begin{array}{cc|ccc}
 1& 0    &0  & 0 & 0 \\
 0& 1    &1  & 0 & 0 \\
\hline
 0& 0    &1  & 1 & 1 \\
 0& 0    &0  & 1 & 0 \\
 0& 0    &0  & 0 & 1
\end{array}\right)} \,  
{
A_{37}=\left(\begin{array}{cc|ccc}
 1& 0    &0  & 0 & 0 \\
 0& 1    &1  & 1 & 0 \\
\hline
 0& 0    &1  & 1 & 1 \\
 0& 0    &0  & 1 & 0 \\
 0& 0    &0  & 0 & 1
\end{array}\right)} \\
&{
A_{38}=\left(\begin{array}{cc|ccc}
 1& 0    &0  & 1 & 1 \\
 0& 1    &1  & 0 & 0 \\
\hline
 0& 0    &1  & 1 & 1 \\
 0& 0    &0  & 1 & 0 \\
 0& 0    &0  & 0 & 1
\end{array}\right)} \,
{
A_{39}=\left(\begin{array}{cc|ccc}
 1& 0    &0  & 1 & 0 \\
 0& 1    &1  & 0 & 0 \\
\hline
 0& 0    &1  & 1 & 1 \\
 0& 0    &0  & 1 & 0 \\
 0& 0    &0  & 0 & 1
\end{array}\right)} \\
&{
A_{40}=\left(\begin{array}{cc|ccc}
 1& 0    &1  & 0 & 1 \\
 0& 1    &1  & 1 & 0 \\
\hline
 0& 0    &1  & 1 & 1 \\
 0& 0    &0  & 1 & 0 \\
 0& 0    &0  & 0 & 1
\end{array}\right)}.
\end{split}
\end{align*}
The fixed point sets of the $(\ZZ_2)^s$-actions ($s=1,2$) on $M(B)$  corresponding to
$A_{36}$, $A_{37}$, $A_{38}$, $A_{39}$ and $A_{40}$ are as follows:
(1) $T^2$, $4$ points,
(2) $4$ components $S^1$,
(3) $2$ components $T^2$, $4$ components $S^1$, $8$ points,
(4) $3$ components $T^2$, $4$ components $S^1$, $4$ points,
(5) $12$ components $S^1$.  
By Remark \ref{rem15}, Bott matrices $A_{i}$ ($i=36,37$)
 are not equivalent to  $A_{j}$ ($j=38,39,40$) 
because the $(\ZZ_2)^2$-action corresponding to  $A_{j}$ ($j=38,39,40$) 
cannot be reduced to a $\ZZ_2$-action. 
On the other hand, by Proposition \ref{T5}, Bott matrices $A_{i}$ ($i=36,37$) (resp. $A_{j}$ ($j=38,39,40$))
 are not equivalent to each other.


The Bott matrices $A_{i}$ ($i=41,42,43,44$)  below are created 
from the first  Bott matrix in c) of Theorem \ref{dim-3} while 
$A_{45}$ is created from the second Bott matrix in c). 
\begin{align*}
&{
A_{41}=\left(\begin{array}{cc|ccc}
 1& 0    &0  & 0 & 0 \\
 0& 1    &1  & 1 & 0 \\
\hline
 0& 0    &1  & 0 & 0 \\
 0& 0    &0  & 1 & 1 \\
 0& 0    &0  & 0 & 1
\end{array}\right)} \, 
{
A_{42}=\left(\begin{array}{cc|ccc}
 1& 0    &0  & 1 & 0 \\
 0& 1    &1  & 0 & 0 \\
\hline
 0& 0    &1  & 0 & 0 \\
 0& 0    &0  & 1 & 1 \\
 0& 0    &0  & 0 & 1
\end{array}\right)}  \\
&{
A_{43}=\left(\begin{array}{cc|ccc}
 1& 0    &0  & 1 & 0 \\
 0& 1    &1  & 0 & 1 \\
\hline
 0& 0    &1  & 0 & 0 \\
 0& 0    &0  & 1 & 1 \\
 0& 0    &0  & 0 & 1
\end{array}\right)} \, 
{
A_{44}=\left(\begin{array}{cc|ccc}
 1& 0    &0  & 0 & 1 \\
 0& 1    &1  & 1 & 0 \\
\hline
 0& 0    &1  & 0 & 0 \\
 0& 0    &0  & 1 & 1 \\
 0& 0    &0  & 0 & 1
\end{array}\right)}\\
&{
A_{45}=\left(\begin{array}{cc|ccc}
 1& 0    &0  & 1 & 0 \\
 0& 1    &1  & 0 & 0 \\
\hline
 0& 0    &1  & 0 & 1 \\
 0& 0    &0  & 1 & 1 \\
 0& 0    &0  & 0 & 1
\end{array}\right)}.
\end{align*}
The fixed point sets of the $(\ZZ_2)^2$-actions 
on $M(B)$  corresponding to
 $A_{42}$, $A_{43}$, $A_{44}$ and $A_{45}$ are as follows:
(1) $3$ components $T^2$, $4$ components $S^1$, $4$ points,
(2) $T^2$, $8$ components $S^1$, $4$ points,
(3) $2$ components $T^2$, $4$ components $S^1$, $8$ points,
(4) $2$ components $T^2$, $6$ components $S^1$, $4$ points. 
By Remark \ref{rem15}, $A_{41}$ is not equivalent to $A_{i}$  ($i=42,43,44,45$) 
because the $(\ZZ_2)^2$-action corresponding to  $A_{i}$ ($i=42,43,44,45$) 
cannot be reduced to a $\ZZ_2$-action. Then
by Proposition \ref{T5}, Bott matrices $A_{i}$  ($i=42,43,44,45$)  are not equivalent to each other.


The following Bott matrices are created from $I_3$.
\begin{align*}
&{
A_{46}=\left(\begin{array}{cc|ccc}
 1& 0    &0  & 0 & 0 \\
 0& 1    &1  & 1 & 1 \\
\hline
 \mbox {\Large{$0$}}& \mbox {\Large{$0$}}   &  & \mbox {\large{$I_3$}} &  
\end{array}\right)}\,
{
A_{47}=\left(\begin{array}{cc|ccc}
 1& 0    &0  & 1 & 1 \\
 0& 1    &1  & 0 & 0 \\
\hline
 \mbox {\Large{$0$}}& \mbox {\Large{$0$}}   &  & \mbox {\large{$I_3$}} &  
\end{array}\right)} \\
&{
A_{48}=\left(\begin{array}{cc|ccc}
 1& 0    &1  & 0 & 1 \\
 0& 1    &1  & 1 & 0 \\
\hline
 \mbox {\Large{$0$}}& \mbox {\Large{$0$}}   &  & \mbox {\large{$I_3$}} &  
\end{array}\right)}.
\end{align*}
The fixed point sets of the $(\ZZ_2)^2$-actions  on $M(B)$  corresponding to
$A_{47}$ and $A_{48}$ are as follows:
(1) $2$ components $T^2$, $4$ components $S^1$, $8$ points,
(2) $12$ components $S^1$. 
By Remark \ref{rem15},  $A_{46}$  is not equivalent to $A_{i}$ ($i=47,48$), 
and by Proposition \ref{T5}, $A_{47}$ is not equivalent to $A_{48}$.

Since each Bott matrix B of a) to d) is not equivalent to each other, the resulting Bott matrix 
$A$ is not equivalent. Totally, 19 Bott matrices $A_{i}$ ($i=30,\dots,48$)
  are not equivalent to each other. 

When we take the second Bott matrix $B'$ from a) of Theorem \ref{dim-3},
the resulting Bott matrix $A'$  gives an action $((\ZZ_2)^s,M(B'))$ $(s=1,2)$.
We can check that $((\ZZ_2)^s,M(B'))$ is equivariantly diffeomorphic to 
one of the actions $((\ZZ_2)^s,M(B))$ corresponding to $A_i$ ($i=30,\dots,35$) by the ad hoc argument. 
(Compare Section \ref{EXAMPEL}
for the argument to find an equivariant diffeomorphism.)
Once there exists such an equivariant diffeomorphism, $A'$ is equivalent to one of 
$A_i$'s ($i=30,\dots,35$) by Theorem \ref{T2}.
Similarly, if $A'$ is another Bott matrix created from the first Bott matrix in a)
of Theorem \ref{dim-3}, we can check that the corresponding 
$((\ZZ_2)^s,M(B))$ ($s=1,2$) is equivariantly diffeomorphic to one of the actions $((\ZZ_2)^s,M(B))$ 
corresponding to $A_i$ ($i=30,\dots,35$). 
(Note that the total number of Bott matrices created from the first Bott matrix in a) is 76.)   
This argument also works for the case b), c) and d). As a consequence the Bott matrix $A'$ created  from
Bott matrices in b), c) and d) is equivalent to one of $A_i$'s ($i=36,\dots,48$). 
In summary, we obtain the following.
\begin{lemma}\label{lemS1}
A Bott matrix created from any one of Bott matrices of Theorem \ref{dim-3} 
is equivalent to 
one of the Bott matrices $A_i$ $(i=30,\dots,48)$ above.
\end{lemma}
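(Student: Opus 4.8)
The plan is to read this as a completeness (exhaustiveness) statement: every $M(A)$ built on a $3\times 3$ Bott matrix $B$ from Theorem \ref{dim-3} must coincide, up to diffeomorphism, with one of the nineteen already-distinguished representatives $A_{30},\dots,A_{48}$. First I would invoke the maximal $T^2$-structure. Since here $k=2$ is the dimension of the maximal toral action and, by Remark \ref{rem15}, the number $k$, the diffeomorphism type of $M(B)$, and the effective parameter $s$ are all diffeomorphism invariants of $M(A)$, it suffices to argue one diffeomorphism class of $B$ at a time. By Theorem \ref{dim-3} there are exactly four such classes a)--d), and the listed representatives split accordingly ($A_{30}$--$A_{35}$ over class a), $A_{36}$--$A_{40}$ over b), $A_{41}$--$A_{45}$ over c), $A_{46}$--$A_{48}$ over d)). By Theorem \ref{T2} the task on each class reduces to classifying, up to equivariant diffeomorphism, the induced $(\ZZ_2)^s$-actions on the fixed manifold $M(B)$.

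Fixing a representative $B$ in one class, the upper block $\ast$ ranges over $\{0,1\}^{6}$, giving $64$ candidate matrices $A$ (and more once the other representatives of the class, related to $B$ by move {\bf I}, are included). I would cut this down in two stages. First, apply moves {\bf II} and {\bf III} to replace the a priori $(\ZZ_2)^2$-action by its effective quotient $(\ZZ_2)^s$: a zero row of $\ast$, or a pair of equal rows, signals that the action drops to $s=1$, exactly as in the examples following moves {\bf II} and {\bf III} in Section \ref{S1}. Second, change the generators $g_1,g_2$ of the acting group; in terms of the block form \eqref{matrixAE}--\eqref{finalform} this is the admissible $\GL$-freedom combined with coordinatewise scalings and conjugations of the torus coordinates. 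Each such change is realized by an equivariant affine diffeomorphism of $M(B)$, hence by Theorem \ref{T2} an equivalence $A\sim A'$.

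To close the collapsing direction I would, for every remaining $A$, exhibit an explicit equivariant diffeomorphism $\varphi$ of $M(B)$ carrying it onto one of the $A_i$, following the pattern of parts {\bf a)} and {\bf c)} of Section \ref{EXAMPEL}: one multiplies a single coordinate by ${\bf i}$ or conjugates it, relabels the generators, and checks relations of the form $\varphi g_i=h_{\sigma(i)}\varphi$. The inequivalence of the nineteen representatives themselves is already settled in the computations preceding the lemma, via Proposition \ref{T5}: each induced action is determined by the number of connected components of its positive-dimensional fixed-point subsets together with the type $(b_2,\dots,b_\ell)$ of \eqref{preserve} that an equivariant map must preserve. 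The essential subtlety--illustrated by part {\bf b)} of Section \ref{EXAMPEL}--is that two actions sharing the same raw component counts may still differ in type, hence remain inequivalent; this is what guarantees that the target list is neither redundant nor short.

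The hard part will be the sheer volume of bookkeeping rather than any conceptual difficulty: for class a) alone there are $76$ matrices to dispose of, and the combined freedom in the $\ast$-block, in the choice of representative of $B$, and in the generator changes produces a large case tree. The delicate constraint throughout is that the affine equivalences must respect the block-diagonal shape of \eqref{matrixAE} forced by the normal form \eqref{matrixB} of $B$, so that only coordinatewise operations inside each factor $T^{b_i}$ are permitted; this is what blocks spurious identifications and makes the fixed-point \emph{type}, not merely the component count, the decisive invariant. Assembling the per-class verifications then yields the stated reduction to $A_{30},\dots,A_{48}$, and since the argument runs exactly parallel to the (omitted) $S^1$-case treated earlier, it is likewise essentially a matter of ad hoc computation.
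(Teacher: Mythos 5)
Your proposal is correct and follows essentially the same route as the paper: the paper likewise argues class-by-class over a)--d) of Theorem \ref{dim-3}, reduces via moves \textbf{II}, \textbf{III} and Theorem \ref{T2} to comparing the induced $(\ZZ_2)^s$-actions on $M(B)$, and disposes of every remaining matrix by an ad hoc equivariant diffeomorphism in the style of Section \ref{EXAMPEL}, leaving the bulk of the case-by-case verification unwritten just as you do. The only difference is organizational: you additionally recall why the nineteen representatives are mutually inequivalent (Proposition \ref{T5} and the type invariant), which the paper treats separately before the lemma rather than as part of it.
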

\begin{pro}\label{proT2}
There are $19$ diffeomorphism classes of the case $T^2$-actions with $3$-dimensional
quotients.
\end{pro}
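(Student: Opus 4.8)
The plan is to convert the diffeomorphism classification into an equivariant classification on a three-dimensional base. By the Structure Theorem~\ref{T1}, a five-dimensional real Bott manifold carrying a maximal $T^2$-action is $M(A)=T^2\times_{(\ZZ_2)^s}M(B)$ with $M(B)$ a three-dimensional real Bott manifold and $1\le s\le 2$, and $A$ has the block shape~\eqref{newformA1}. By the Rigidity Theorem~\ref{T2} together with Remark~\ref{rem15}, two such manifolds are diffeomorphic exactly when the bases $M(B_i)$ are diffeomorphic, the integers $s_i$ coincide, and the induced actions $((\ZZ_2)^{s},M(B))$ are equivariantly diffeomorphic. Since three-dimensional real Bott manifolds split into the four classes a)--d) of Theorem~\ref{dim-3}, and Remark~\ref{rem15}(ii) rules out any diffeomorphism between total spaces lying over non-diffeomorphic bases, I would run the count as four independent sub-counts, one for each base class.

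Inside a fixed base class I would pick a representative $B$ and list every admissible upper-right block $\ast$ in~\eqref{newformA1}, that is, every choice of the two top rows in columns $3,4,5$. Applying moves {\bf II} and {\bf III} reduces the $(\ZZ_2)^2$-action to its effective $(\ZZ_2)^s$-form and so splits the matrices into an $s=1$ family and an $s=2$ family; by Remark~\ref{rem15}(iii) no member of one family can be equivalent to a member of the other, so they are handled separately. To separate the remaining actions I would compute, for each, the fixed point set of $(\ZZ_2)^s$ on $M(B)$ as an arrangement of affine subtori in $T^3/(\ZZ_2)^3$, recording the number of components in each dimension together with the type $(b_2,\dots,b_\ell)$ of~\eqref{preserve}. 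Proposition~\ref{T5} then separates inequivalent actions; when two actions share identical component counts (as happens for $A_{32}$ and $A_{34}$) I would fall back on the finer type invariant, exactly as in the comparison {\bf b)} of Section~\ref{EXAMPEL}, using that an affine conjugacy of the form~\eqref{finalform} preserves each $T^{b_i}$ factor.

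This scheme yields the representatives $A_{30}$--$A_{35}$ (six actions over class a), of which $A_{30},A_{31}$ have $s=1$), $A_{36}$--$A_{40}$ (five over b)), $A_{41}$--$A_{45}$ (five over c)), and $A_{46}$--$A_{48}$ (three over $I_3$); inter-class distinctness is immediate from Remark~\ref{rem15}(ii), and $6+5+5+3=19$. The principal obstacle is the completeness direction, \ie Lemma~\ref{lemS1}: one must verify that \emph{every} admissible $\ast$-block---as many as $76$ over class a) alone---produces an action equivariantly diffeomorphic to one of these nineteen. The fixed-point invariants only indicate which representative to target; to certify the equivalence I would, for each non-representative block, construct an explicit equivariant diffeomorphism $\varphi\colon((\ZZ_2)^s,M(B))\to((\ZZ_2)^s,M(B))$ that lifts to an affine map of $T^3$ of the shape~\eqref{finalform}, after first replacing a generator $h_i$ by a suitable product $h_ih_j$ when needed, precisely as in computations {\bf a)} and {\bf c)} of Section~\ref{EXAMPEL}. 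Performing this reduction uniformly across all blocks is the tedious but decisive step that pins the count down to exactly $19$ rather than merely an upper bound.
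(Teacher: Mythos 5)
Your proposal is correct and follows essentially the same route as the paper: the same four sub-counts over the base classes a)--d) of Theorem~\ref{dim-3}, the same nineteen representatives $A_{30}$--$A_{48}$ separated by fixed-point component counts (Proposition~\ref{T5}), the type invariant for the pair $A_{32}$, $A_{34}$, and Remark~\ref{rem15} for the $s=1$ versus $s=2$ and inter-class distinctions. Your completeness step is also the paper's: reduce every admissible $\ast$-block to one of the representatives by ad hoc equivariant diffeomorphisms (Lemma~\ref{lemS1}), a verification the paper likewise states without writing out in full.
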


\subsection{$T^3$-actions with $2$-dimensional quotients}
\par\

The Bott matrices of $M(A)$ admitting $T^3$-actions have the following
form
\begin{align*}
{
\left(\begin{array}{c|c}
 \mbox {\large{$I_3$}} & \mbox {\large{$\ast $}} \\
\hline
\mbox {\large{0}} & \mbox {\large{B}}\\
\end{array}\right)}.
\end{align*}
In this case a Bott matrix $B$ is either  $A_1$ or $A_2$ in 
Theorem \ref{dim-2}.  
The Bott matrices $A_{49}$ and $A_{50}$ (resp. $A_{51}$ and $A_{52}$) below are created from 
$A_{1}$ (resp. $A_{2}$).
\begin{align*}
{
A_{49}=\left(\begin{array}{ccc|cc}
 1& 0    &0  & 0 & 0 \\
 0& 1    &0  & 0 & 0 \\
 0& 0    &1  & 1 & 1 \\
\hline
 0& 0    &0  & 1 & 0 \\
 0& 0    &0  & 0 & 1
\end{array}\right)} \, 
{
A_{50}=\left(\begin{array}{ccc|cc}
 1& 0    &0  & 0 & 0 \\
 0& 1    &0  & 1 & 0 \\
 0& 0    &1  & 0 & 1 \\
\hline
 0& 0    &0  & 1 & 0 \\
 0& 0    &0  & 0 & 1
\end{array}\right)}\\
{
A_{51}=\left(\begin{array}{ccc|cc}
 1& 0    &0  & 0 & 0 \\
 0& 1    &0  & 0 & 0 \\
 0& 0    &1  & 1 & 0 \\
\hline
 0& 0    &0  & 1 & 1 \\
 0& 0    &0  & 0 & 1
\end{array}\right)} \, 
{
A_{52}=\left(\begin{array}{ccc|cc}
 1& 0    &0  & 0 & 0 \\
 0& 1    &0  & 1 & 0 \\
 0& 0    &1  & 0 & 1 \\
\hline
 0& 0    &0  & 1 & 1 \\
 0& 0    &0  & 0 & 1
\end{array}\right)}.
\end{align*}
Since $A_1$ and $A_2$ in Theorem \ref{dim-2} are not equivalent, none of
$A_{49}$ and $A_{50}$ is equivalent to $A_{51}$ or $A_{52}$. Then 
$A_{49}$ (resp. $A_{51}$) is not equivalent to $A_{50}$ (resp. $A_{52}$), because 
$(\ZZ_2)^2$-action on $M(B)$ corresponding to  $A_{50}$ (or $A_{52}$) cannot be reduced to a
$\ZZ_2$-action. 
If $A'$ is another Bott matrix created from $A_1$ in Theorem \ref {dim-2}, we can check that the corresponding 
$((\ZZ_2)^s,M(A_1))$ ($s=1,2$) is equivariantly diffeomorphic to one of the actions $((\ZZ_2)^s,M(A_1))$ 
corresponding to $A_{49}$ and $A_{50}$ by the ad hoc argument. 
Once there exists such an equivariant diffeomorphism, $A'$ is equivalent to 
$A_{49}$ or $A_{50}$  by Theorem \ref{T2}.
This  argument works also for the case $A_2$ in Theorem \ref {dim-2}. 
As a consequence another Bott matrix $A'$ created  from
$A_2$ is equivalent to $A_{51}$ or $A_{52}$. 
Thus we obtain the following.
\begin{lemma}\label{lemT3}
A Bott matrix created from any one of Bott matrices in Theorem \ref{dim-2} is equivalent to 
one of the Bott matrices $A_{i}$ $(i=49,50,51,52)$ above.
\end{lemma}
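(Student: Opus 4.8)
The plan is to establish completeness by translating, via Theorem \ref{T1} and Theorem \ref{T2}, the classification of these $M(A)$ into one of $(\ZZ_2)^s$-actions on $M(B)$, and then showing that each admissible pair $(M(B),s)$ carries a unique such action up to equivariant diffeomorphism. Every matrix of the displayed form gives $M(A)=T^3\times_{(\ZZ_2)^s}M(B)$ with $B\in\{A_1,A_2\}$ of Theorem \ref{dim-2}, so $M(B)$ is either $T^2$ (when $B=A_1=I_2$) or the Klein bottle $K$ (when $B=A_2$). By Remark \ref{rem15}(ii),(iii), a diffeomorphism between two such $M(A)$ forces the corresponding $M(B)$ to be diffeomorphic and the ranks $s$ to agree; since $T^2\not\cong K$, there are at most four classes, indexed by $(M(B),s)\in\{(T^2,1),(T^2,2),(K,1),(K,2)\}$ and realized by $A_{49},A_{50},A_{51},A_{52}$. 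By Theorem \ref{T2} the lemma reduces to showing that, for each fixed pair, all resulting actions are equivariantly diffeomorphic.

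I would first read off the action: the $i$-th row ($i=1,2,3$) acts on $M(B)$ by conjugating $z_4,z_5$ according to $(C_{i4},C_{i5})$, so, writing $\sigma_4,\sigma_5$ for the involutions $z_4\mapsto\bar z_4$ and $z_5\mapsto\bar z_5$, each row lies in $\langle\sigma_4,\sigma_5\rangle\cong(\ZZ_2)^2$. Moves {\bf II} and {\bf III} discard the trivial rows and identify coincident ones, so the effective action is the subgroup $G\le(\ZZ_2)^2$ generated by the rows, with $s=\rank G$. The maximality of the block $I_3$ forces column $4$ to carry a nonzero entry of $C$ (and, when $B=I_2$, also column $5$); equivalently $G\not\le\langle\sigma_5\rangle$, together with $G\not\le\langle\sigma_4\rangle$ in the torus case.

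The case analysis is then immediate except for one subcase. If $s=2$ then $G$ is the unique rank-two subgroup $(\ZZ_2)^2$, so the action is the full conjugation action and, after a reordering by move {\bf I}, the matrix is $A_{50}$ (for $T^2$) or $A_{52}$ (for $K$). If $s=1$ and $B=I_2$, the two column conditions force the single generator to be $\sigma_4\sigma_5$, which is exactly the action of $A_{49}$. The delicate subcase, which I expect to be the main obstacle, is $s=1$ with $B=A_2$: here maximality rules out only $\langle\sigma_5\rangle$, leaving the two distinct rank-one subgroups $G=\langle\sigma_4\rangle$ and $G=\langle\sigma_4\sigma_5\rangle$, and it is not obvious that they give the same class.

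To resolve it, in the spirit of the computations in Section \ref{EXAMPEL}, I would use the Klein-bottle relation carried by $B=A_2$, namely $\tilde g_4\colon x_4\mapsto x_4+\tfrac12,\ x_5\mapsto -x_5$, which on $K$ lets one replace $\sigma_5$ by the half-translation $x_4\mapsto x_4+\tfrac12$. The affine quarter-translation $\varphi\colon x_4\mapsto x_4+\tfrac14$ then normalizes $\pi(A_2)$, hence descends to a diffeomorphism of $K$, and a direct check gives $\varphi\sigma_4\varphi^{-1}=\tilde g_4\,\sigma_4\sigma_5\equiv\sigma_4\sigma_5$ on $K$; thus $\varphi$ conjugates $\langle\sigma_4\rangle$ to $\langle\sigma_4\sigma_5\rangle$, and by Theorem \ref{T2} both reduce to $A_{51}$. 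Each reduction above is realized by an explicit equivariant affine map of the shape \eqref{finalform}, and move {\bf I} puts the surviving rows into the listed positions, so every matrix is equivalent to one of $A_{49},\dots,A_{52}$. The pairwise inequivalence of these four was already recorded through Remark \ref{rem15} and Proposition \ref{T5}, completing the proof.
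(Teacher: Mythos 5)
Your proposal is correct, and it is in fact more complete than what the paper offers for this lemma. The paper's own justification is essentially a sketch: it records the pairwise inequivalence of $A_{49},\dots,A_{52}$ (via Remark \ref{rem15} and the inequivalence of the two matrices of Theorem \ref{dim-2}) and then asserts, deferring to an unwritten ``ad hoc argument'' in the style of Section \ref{EXAMPEL}, that every other created matrix yields an action equivariantly diffeomorphic to one of the four listed ones. You replace that assertion with a structured argument: identify the effective group $G\le\langle\sigma_4,\sigma_5\rangle$ generated by the rows of $C$, use maximality of $I_3$ to rule out $G\le\langle\sigma_5\rangle$ (and $G\le\langle\sigma_4\rangle$ in the torus case), and observe that the action class depends only on the subgroup $G$ of $\Diff(M(B))$, which pins down every case except $s=1$ over the Klein bottle. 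Your resolution of that subcase is exactly the computation the paper does carry out elsewhere, in the proof of Theorem \ref{Alg1} (which for $n=5$ covers the same manifolds): the equivalence of \eqref{newM2a-1} and \eqref{newM2a-2} there is realized by $\varphi([z_{n-1},z_n])=[{\bf i}z_{n-1},z_n]$, which is precisely your quarter-translation $x_4\mapsto x_4+\tfrac14$, and your verification that it centralizes $\pi(A_2)$ and conjugates $\sigma_4$ to $\tilde g_4\sigma_4\sigma_5\equiv\sigma_4\sigma_5$ on $K$ is sound. One small caution: moves {\bf II} and {\bf III} as literally stated only delete zero rows and merge coincident rows, so they do not by themselves handle three distinct, linearly dependent rows such as $(1,0),(0,1),(1,1)$; your conclusion that the effective action is the image group $G$ with $s=\rank G$ is nevertheless what Theorem \ref{T1} guarantees, so this is a gap in the paper's phrasing of the moves, not in your argument. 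What your route buys is a finite, checkable classification replacing case-by-case inspection of all $2^6$ matrices per base; what the paper's route buys is brevity, at the cost of leaving the completeness claim unproved in this subsection.
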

\begin{pro}\label{proT3}
There are $4$ diffeomorphism classes of the case $T^3$-actions with $2$-dimensional
quotients.
\end{pro}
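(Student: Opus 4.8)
The plan is to combine the reduction already packaged in Lemma~\ref{lemT3} with the distinguishing invariants of Remark~\ref{rem15}. Every Bott matrix admitting a maximal $T^3$-action has the displayed block form, with $I_3$ in the upper-left corner and a $2\times 2$ Bott matrix $B$ in the lower-right. By Theorem~\ref{dim-2} such a $B$ is equivalent to $A_1=I_2$ (so $M(B)=T^2$) or to $A_2$ (so $M(B)={\rm K}$, the Klein bottle). Lemma~\ref{lemT3} then tells us that, whatever the entries of the $\ast$-block are, the resulting five-dimensional Bott matrix is equivalent to one of $A_{49},A_{50},A_{51},A_{52}$. Hence there are \emph{at most} four diffeomorphism classes, and it remains only to show these four representatives are pairwise inequivalent.

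For the lower bound I would separate the four representatives by two invariants. First, $A_{49}$ and $A_{50}$ have quotient $M(B)=T^2$ while $A_{51}$ and $A_{52}$ have quotient $M(B)={\rm K}$; since the torus and the Klein bottle are not diffeomorphic, Remark~\ref{rem15}(ii) forbids any equivalence between a matrix of the first pair and a matrix of the second. Second, within each pair I would read off the degree $s$ of the effective residual action: after applying moves \textbf{II} and \textbf{III} the $\ast$-block of $A_{49}$ (resp. $A_{51}$) collapses to a single effective $\ZZ_2$-action on $M(B)$, so $s=1$, whereas the $\ast$-block of $A_{50}$ (resp. $A_{52}$) yields a genuine $(\ZZ_2)^2$-action on $M(B)$ that cannot be reduced, so $s=2$. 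By Remark~\ref{rem15}(iii) a diffeomorphism forces $s_1=s_2$, whence $A_{49}\not\sim A_{50}$ and $A_{51}\not\sim A_{52}$. Together with the first invariant this produces four distinct classes, matching the upper bound and giving exactly $4$.

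The genuinely laborious step is the upper bound encoded in Lemma~\ref{lemT3}: one must verify that \emph{every} admissible filling of the $3\times 2$ block $\ast$ (finitely many, at most $2^{6}$ over each of $A_1$ and $A_2$, trimmed by the requirement that each of the last two columns carry at least two nonzero entries) yields a matrix equivalent to one of the four representatives. The mechanism is the same ad hoc construction of equivariant affine diffeomorphisms used in Section~\ref{EXAMPEL}: for each competing Bott matrix $A'$ one exhibits an affine map $\varphi\colon M(B)\to M(B)$ of the form dictated by \eqref{finalform} that intertwines the $(\ZZ_2)^s$-action of $A'$ with that of the appropriate $A_i$, and then invokes Theorem~\ref{T2}. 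The main obstacle is the bookkeeping: for each nontrivial competitor one must either produce such a $\varphi$ or, when none exists, separate it from the intended representative by Proposition~\ref{T5} through the number and types of the positive-dimensional fixed-point components. Once this finite check is complete, the count of exactly four classes is immediate.
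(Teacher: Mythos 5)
Your proposal is correct and follows essentially the same route as the paper: the upper bound comes from Lemma~\ref{lemT3} (every admissible filling of the $\ast$-block reduces, via ad hoc equivariant diffeomorphisms and Theorem~\ref{T2}, to one of $A_{49},A_{50},A_{51},A_{52}$), and the four representatives are separated exactly as the paper does, by the diffeomorphism type of the quotient $M(B)$ ($T^2$ versus the Klein bottle, Remark~\ref{rem15}(ii)) and by whether the residual action is an irreducible $(\ZZ_2)^2$-action or reduces to a $\ZZ_2$-action (Remark~\ref{rem15}(iii)).
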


\subsection{$T^4$-actions with one-dimensional quotients}\par\

The Bott matrices of $M(A)$ admitting $T^4$-actions have the following
form
\begin{align*}
{
\left(\begin{array}{cccc|c}
&  \mbox {\large{$I_4$}} &  &  &\mbox {\large{$\ast $}} \\
\hline
 0 & 0  & 0 & 0 & 1\\
\end{array}\right)}.
\end{align*}
In this case $M(B)=M(1)=S^1$. 
It is easy to check by using moves {\bf II} and {\bf III},
it consists of just one diffeomorphism class,
where the corresponding  Bott matrix is
\begin{align*}{
A_{53}=\left(\begin{array}{cccc|c}
 & \mbox {\large{$I_4$}}    &   &  & \mbox {\Large{$0$}} \\
 &     &  &  & 1 \\
\hline
 0& 0    &0  & 0 & 1
\end{array}\right)}.
\end{align*}

Obviously the corresponding Bott matrix of size 5 of a real
Bott manifold admitting $T^5$-action is the identity matrix of rank 5.
Combined with Proposition \ref{proS1}, \ref{proT2}, \ref{proT3} and the case of  $T^4$-actions above
 we get the following theorem.
\begin{theorem}\label{dim-5}
The diffeomorphism classes of $5$-dimensional real Bott manifolds
consist of $54$.
\end{theorem}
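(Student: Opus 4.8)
The plan is to stratify the collection of $5$-dimensional real Bott manifolds by the dimension $k$ of the maximal torus action furnished by Theorem \ref{T1}, to observe that this stratification is coarser than (hence compatible with) the partition into diffeomorphism classes, and then to reduce the count to adding up the contributions of the five strata, each supplied by one of the preceding propositions.

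First I would invoke Theorem \ref{T1}: every $5$-dimensional real Bott manifold $M(A)$ admits a maximal $T^k$-action with $k\geq 1$, so that $M(A)=T^k\mathop{\times}_{(\ZZ_2)^s}M(B)$ with $1\leq k\leq 5$. The decisive point is Remark \ref{rem15}(i): if $M(A_1)$ and $M(A_2)$ are diffeomorphic, then $k_1=k_2$. Thus $k$ is a diffeomorphism invariant, and the five families corresponding to $k=1,2,3,4,5$ are pairwise disjoint at the level of diffeomorphism classes. It therefore suffices to enumerate the diffeomorphism classes within each family and sum.

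Next I would simply assemble the enumerations already established. For $k=1$ (the $S^1$-actions with $4$-dimensional quotients) Proposition \ref{proS1} yields $29$ classes; for $k=2$ (the $T^2$-actions with $3$-dimensional quotients) Proposition \ref{proT2} yields $19$; for $k=3$ (the $T^3$-actions with $2$-dimensional quotients) Proposition \ref{proT3} yields $4$. For $k=4$ the quotient is the $1$-dimensional real Bott manifold $M(B)=S^1$, and after applying moves {\bf II} and {\bf III} the only surviving Bott matrix is $A_{53}$, so this family contributes a single class; for $k=5$ the manifold is $T^5$, again one class. Adding the contributions gives $29+19+4+1+1=54$.

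The substance of the argument resides entirely inside the cited propositions rather than in this final tally. The hardest part is the $k=1$ and $k=2$ cases, where one must (a) reduce every Bott matrix of the prescribed block form to one of the listed representatives $A_i$ using moves {\bf I}, {\bf II}, {\bf III} together with the ad hoc equivariant diffeomorphisms illustrated in Section \ref{EXAMPEL}, and (b) prove that distinct representatives are genuinely non-diffeomorphic. The separation step is carried out via Proposition \ref{T5} by comparing, for each $(\ZZ_2)^s$-action on $M(B)$, the number of connected components of the fixed-point set in every dimension and, when these numbers agree, the type $(b_2,\dots,b_\ell)$ of the fixed components, exactly as in example {\bf b)} of Section \ref{EXAMPEL}. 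Once Propositions \ref{proS1}, \ref{proT2}, and \ref{proT3} are in hand, the theorem follows at once from the disjointness of the strata established above.
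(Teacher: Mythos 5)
Your proposal is correct and follows essentially the same route as the paper: stratify by the dimension $k$ of the maximal torus action (a diffeomorphism invariant by Remark \ref{rem15}), then sum the counts $29+19+4+1+1=54$ supplied by Propositions \ref{proS1}, \ref{proT2}, \ref{proT3} and the $T^4$-, $T^5$-cases. The paper's proof of Theorem \ref{dim-5} is precisely this assembly, with the substantive work residing in those propositions, exactly as you note.
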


\section{Classification of $n$-dimensional
Real Bott Manifolds}\label{four}

In this section we shall prove some results regarding
 the classification of
 certain types of $n$-dimensional real Bott manifolds.

\begin{theorem}\label{Alg1}
The number of diffeomorphism classes of $n$-dimensional
real Bott manifolds $(n\geq$ $4)$ which admit the maximal $T^{n-2}$-actions
$($\ie $s=1,2$ $)$ is $4$: $$M(A)=T^{(n-2)}\mathop{\times}_{(\ZZ_2)^s} M(B).$$
\end{theorem}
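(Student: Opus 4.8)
The plan is to classify, up to equivalence, the Bott matrices of the form in \eqref{newformA1} with $k=n-2$, so that $M(B)$ is a $2$-dimensional real Bott manifold. By Theorem \ref{dim-2}, $M(B)$ is diffeomorphic either to the torus $T^2$ (take $B=I_2$) or to the Klein bottle $\mathrm{K}$ (take $B$ with a single off-diagonal $1$). By Remark \ref{rem15}(ii) these two cases never mix, so the count splits into the contribution from $M(B)\cong T^2$ and the contribution from $M(B)\cong \mathrm{K}$, and I will treat them separately.

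First I would analyze the effective $(\ZZ_2)^s$-action on $M(B)$ arising from the block $\ast$ in \eqref{newformA1}. Since $k=n-2$, the maximal torus is $T^{n-2}$ and $s\in\{1,2\}$; by Remark \ref{rem15}(iii) the value of $s$ is itself a diffeomorphism invariant, so I would organize the argument by $s$. For each case I would write down the induced involution(s) on $M(B)=T^2/(\ZZ_2)^2$ using \eqref{inZs}, where each generator acts coordinatewise by $z_j\mapsto \bar z_j$ or $z_j\mapsto z_j$. For $s=1$ there is a single nontrivial involution on $M(B)$, and for $s=2$ there are two commuting involutions; in each case the possible actions are determined by the pattern of conjugations, and after applying moves \textbf{I}, \textbf{II}, \textbf{III} the number of genuinely distinct effective actions is small. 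I would then invoke Proposition \ref{T5}, distinguishing the remaining candidates by counting the components and recording the type $(b_2,\dots,b_\ell)$ of each positive-dimensional fixed-point set of the $(\ZZ_2)^s$-action on $M(B)$, exactly as in the worked examples \textbf{a)}--\textbf{c)} of Section \ref{EXAMPEL}. Whenever two candidate actions yield the same fixed-point data I would exhibit an explicit equivariant affine diffeomorphism $\varphi$ of $M(B)$ of the form \eqref{finalform} conjugating one action to the other, so that Theorem \ref{T2} identifies the corresponding $M(A)$.

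Carrying this out, I expect the tally to be: when $M(B)\cong T^2$, the trivial case recovers $M(A)=T^n$ (one class), and the nontrivial $s=1,2$ actions contribute a few more; when $M(B)\cong \mathrm{K}$, the conjugation action on the Klein bottle again yields a short list. Summing the contributions across $s=1,2$ and across the two diffeomorphism types of $M(B)$, and verifying by Remark \ref{rem15} that no class from one bucket coincides with a class from another, I anticipate the total to come out to exactly $4$. The bookkeeping is essentially the $n=4$ case of the classification (the matrices in Theorem \ref{dim-4} that admit a maximal $T^2$-action), and the point of stating it for general $n$ is that the maximal-torus structure in Theorem \ref{T1} reduces every such $n$-dimensional example to this fixed $2$-dimensional quotient problem, independently of $n$.

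The main obstacle is not the enumeration of the raw matrices but the \emph{collapsing} step: showing that two superficially different effective $(\ZZ_2)^s$-actions on the same $M(B)$ are in fact equivariantly diffeomorphic. This requires producing the conjugating affine map $\varphi$ explicitly (as in \textbf{a)} and \textbf{c)} of Section \ref{EXAMPEL}) and simultaneously proving non-equivalence in the remaining cases via the finer type invariant of Proposition \ref{T5}, since a naive count of fixed-point components alone can fail to separate classes (compare the caution in \textbf{b)}). Getting these matchings and separations exactly right is what pins the final number down to $4$ rather than an over- or under-count.
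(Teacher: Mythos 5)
Your overall strategy is the same as the paper's: reduce to the $2$-dimensional quotient $M(B)$, split into the cases $M(B)\cong T^2$ (i.e.\ $B_1=I_2$) and $M(B)\cong {\rm K}$ (the nontrivial $B_2$), normalize the $*$-block by moves \textbf{II}, \textbf{III} and explicit equivariant affine diffeomorphisms, and separate the survivors by the fact that an irreducible $(\ZZ_2)^2$-action cannot be equivalent to a $\ZZ_2$-action (Remark \ref{rem15}). The paper carries this out by exhibiting the four normal forms \eqref{newM1a}, \eqref{newM1b}, \eqref{newM2a-2}, \eqref{newM2b}, collapsing \eqref{newM2a-1} onto \eqref{newM2a-2} via $\varphi([z_{n-1},z_n])=[{\bf i}z_{n-1},z_n]$; it never needs the fixed-point-type invariant of Proposition \ref{T5} for this theorem, since within each quotient type the two classes are distinguished purely by $s$.

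However, your anticipated tally contains a concrete error: you count ``the trivial case $M(A)=T^n$'' as one of the classes coming from $M(B)\cong T^2$. The trivial $*$-block is excluded by the hypothesis of the theorem, because then the maximal torus action on $M(A)=T^n$ is $T^n$, not $T^{n-2}$ (equivalently, maximality of $I_{n-2}$ in \eqref{formA1} forces every column of the block above $B$ to acquire a second nonzero entry, so the induced action has $s\in\{1,2\}$). The correct breakdown is $2+2$: from $T^2$ the class with $s=1$ (single generator acting by $(z,w)\mapsto(\bar z,\bar w)$; the patterns $(1,0)$ and $(0,1)$ alone are forbidden by maximality) and the class with $s=2$ (generators $(\bar z,w)$ and $(z,\bar w)$); from ${\rm K}$ likewise one class with $s=1$ and one with $s=2$. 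If you keep $T^n$ in the list and still enumerate the nontrivial actions correctly you would land on $5$, not $4$; landing on $4$ with $T^n$ included means one genuine class has been merged away. This is exactly why the paper's Corollary \ref{cor1} adds $T^n$ and $T^{n-1}\times_{\ZZ_2}S^1$ only \emph{afterwards}, to pass from $4$ to $6$ for $k=n-2,n-1,n$.
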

\begin{proof}
Since $M(B)$ is a 2-dimensional real Bott manifold,
the real Bott manifolds 
$M(A)$ created from $M(B)$ correspond to the following Bott matrices
\begin{align}\label{M1}{
\left (\begin{array}{c|c}
\mbox{\large$I_{n-2}$}  &          \mbox{\Large$*$} \\
\hline
\mbox{\large$0$}  & \mbox{\large$B_1$}  \\
\end{array}\right)},
\end{align}
\begin{align}\label{M2}{
\left (\begin{array}{c|c}
\mbox{\large$I_{n-2}$}  &          \mbox{\Large$*$} \\
\hline
\mbox{\large$0$}  & \mbox{\large$B_2$}  \\
\end{array}\right)}
\end{align}
where $B_1=I_2$, 
$B_2={
\left (\begin{array}{cc}
1& 1\\
0 & 1
\end{array}\right)}$.

Let us consider  (\ref{M1}).
If the entries in each row of  \mbox{\Large$*$} are the same then
by moves {\bf II} or {\bf III},  (\ref{M1}) is equivalent to
\begin{align}\label{newM1a}{
\left (\begin{array}{cc|cc}
\mbox{\large$I_{n-2}$}         &        &    \mbox{\Large$0$} & \mbox{\Large$0$}   \\
         &        & 1  & 1\\
\hline
         &        &  1  & 0 \\
\mbox{\Large$0$}  &        &  0  & 1 \\
\end{array}\right)}.
\end{align}
Otherwise by moves {\bf II}, {\bf III} or the equivariant diffeomorphism 
$\varphi \colon ((\ZZ_2)^2,$ $M(B_1))\to((\ZZ_2)^2,M(B_1))$ defined by 
$\varphi [z_{n-1},z_n]=[z_{n-1},z_n]$,
(\ref{M1}) is equivalent to
\begin{align}\label{newM1b}
{
\left (\begin{array}{cc|cc}
         &        &    \mbox{\Large$0$} & \mbox{\Large$0$}   \\
\mbox{\large$I_{n-2}$}  &        & 1  & 0\\
  &        & 0  & 1\\
\hline
         &        &  1  & 0 \\
\mbox{\Large$0$}  &        &  0  & 1 \\
\end{array}\right)}.
\end{align}
However (\ref{newM1a}) is not equivalent to (\ref{newM1b}) because
the $(\ZZ_2)^2$-action on $M(B_1)$ corresponding to  (\ref{newM1b})
cannot be reduced to a $\ZZ_2$-action on it.

Let us consider  (\ref{M2}). If the entries in each row of \mbox{\Large$*$}
are the same  then (\ref{M2}) is equivalent to
\begin{align}\label{newM2a-1}{
\left (\begin{array}{cc|cc}
\mbox{\large$I_{n-2}$}        &        & \mbox{\Large$0$} & \mbox{\Large$0$}      \\
         &        & 1  & 1\\
\hline
         &        &  1  & 1 \\
\mbox{\Large$0$}  &        &  0  & 1 \\
\end{array}\right)}
\end{align}
or if the entries in the second column of  \mbox{\Large$*$} are all zero  then
 (\ref{M2}) is equivalent to
\begin{align}\label{newM2a-2}{
\left (\begin{array}{cc|cc}
\mbox{\large$I_{n-2}$}        &        & \mbox{\Large$0$} & \mbox{\Large$0$}      \\
         &        & 1  & 0\\
\hline
         &        &  1  & 1 \\
\mbox{\Large$0$}  &        &  0  & 1 \\
\end{array}\right)}
\end{align}
by  moves {\bf II} or {\bf III}.
However (\ref{newM2a-1}) and
(\ref{newM2a-2}) are equivalent by the equivariant diffeomorphism
$\varphi\colon(\ZZ_2,M(B_2))\to (\ZZ_2,M(B_2))$ defined by
$\varphi([z_{n-1},z_n])$ $=[{\bf i}z_{n-1},z_n]$.
Otherwise (\ref{M2}) is equivalent to
\begin{align}\label{newM2b}{
\left (\begin{array}{cc|cc}
         &        &    \mbox{\Large$0$} & \mbox{\Large$0$}   \\
\mbox{\large$I_{n-2}$}  &        & 1  & 0\\
         &        & 0  & 1\\
\hline
         &        &  1  & 1 \\
\mbox{\Large$0$}  &        &  0  & 1 \\
\end{array}\right)}
\end{align}
by moves {\bf II}, {\bf III} or the equivariant diffeomorphism 
$\varphi \colon ((\ZZ_2)^2,$ $M(B_2))\to((\ZZ_2)^2,$ $M(B_2))$ defined by 
$\varphi [z_{n-1},z_n]=[z_{n-1},z_n]$.
Moreover  (\ref{newM2a-2}) and (\ref{newM2b}) are not equivalent
because the $(\ZZ_2)^2$-action on $M(B_2)$ corresponding to  (\ref{newM2b})
cannot be reduced to a $\ZZ_2$-action on it.

Therefore Bott matrices (\ref{M1}) and (\ref{M2}) give 4 distinct
diffeomorphism classes of real Bott manifolds $M(A)$.
\end{proof}

\begin{corollary}\label{cor1}
For any $n\geq 4$, the number of diffeomorphism classes of $n$-dimensio-nal
real Bott manifolds which admit the maximal $T^{k}$-actions
$(k=n-2,n-1,n)$ is $6$.
\end{corollary}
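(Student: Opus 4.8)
The plan is to partition the real Bott manifolds under consideration according to the dimension $k$ of their maximal torus action and to count the three values $k=n-2,n-1,n$ separately, then add. The key structural input is Remark \ref{rem15}(i): the integer $k$ is a diffeomorphism invariant of $M(A)$. Consequently the manifolds admitting maximal $T^{n-2}$-, $T^{n-1}$- and $T^n$-actions fall into mutually disjoint collections of diffeomorphism classes, so the total count is simply the sum of the three individual counts and no cross-comparison between different $k$ is ever required.

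The two endpoint cases are elementary. For $k=n$ the Bott matrix must be $I_n$ and $M(A)=T^n$, contributing exactly one class. For $k=n-1$ the quotient $M(B)$ is a $1$-dimensional real Bott manifold, hence $M(B)=S^1$; writing the Bott matrix in the reduced form \eqref{newformA1}, the only effective action on $S^1$ is the conjugation $z\mapsto\bar z$, which forces $s=1$, and every admissible column of $\ast$-entries is brought to this single normal form by moves {\bf II} and {\bf III}. Thus $M(A)=T^{n-1}\mathop{\times}_{\ZZ_2}S^1$ is determined up to diffeomorphism, giving exactly one class (diffeomorphic to $(S^1)^{n-2}\times$ Klein bottle, by the Remark following Theorem \ref{T1}).

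For the remaining case $k=n-2$ I would invoke Theorem \ref{Alg1} directly, which already establishes that the real Bott manifolds with maximal $T^{n-2}$-action ($s=1,2$) form exactly $4$ diffeomorphism classes. Summing the three counts yields $4+1+1=6$, which is the assertion. The only point genuinely requiring care is the disjointness of the classes across the three values of $k$; this is supplied verbatim by Remark \ref{rem15}(i), and once it is in hand the corollary is purely a bookkeeping consequence of Theorem \ref{Alg1} together with the two endpoint counts. I therefore do not anticipate a real obstacle here, since the substantive enumeration is carried entirely by Theorem \ref{Alg1}, and my task reduces to verifying that the $k=n-1$ and $k=n$ strata each contribute a single class disjoint from the $T^{n-2}$ stratum.
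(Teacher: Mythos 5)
Your proposal is correct and follows essentially the same route as the paper: the paper likewise invokes Theorem \ref{Alg1} for the four classes with maximal $T^{n-2}$-action, observes that $M(A_1)=T^{n-1}\mathop{\times}_{\ZZ_2}S^1$ reduces to a single normal form by move \textbf{III} and that $T^n$ corresponds to $I_n$, and relies on the earlier observation (Remark \ref{rem15}) that manifolds with different maximal torus actions are never diffeomorphic, so the counts add to $4+1+1=6$. Your write-up is in fact slightly more explicit than the paper's about the disjointness of the three strata and the reduction for $k=n-1$, but the content is the same.
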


\begin{proof}
It is easy to check that real Bott manifolds
$M(A_1)=T^{n-1}\mathop{\times}_{\ZZ_2} S^1$ and
$M(A_2)=T^n$ have only one diffeomorphism class respectively,
because the corresponding Bott matrices are
$A_1 ={
\left (\begin{array}{c|c}
\mbox{\large$I_{n-1}$}      & \mbox{\Large$\ast $}  \\
         &        1      \\
\hline
\mbox{\Large$0$}         &  1 \\
\end{array}\right)}$ \nz and $A_2=I_n$ respectively.
Moreover,  $A_1 $ reduces to
$A'_1 ={
\left (\begin{array}{c|c}
\mbox{\large$I_{n-1}$}      & \mbox{\Large$0$}  \\
         &        1   \\
\hline
\mbox{\Large$0$}          &  1 \\
\end{array}\right)}$ \nz by move {\bf III}.
\end{proof}

\begin{corollary}
If $M(A)=S^1\mathop{\times}_{\ZZ_2} M(B)$
where $M(B)=T^k\mathop{\times}_{\ZZ_2} S^1$,
then for any $k\geq 1$ there is only
one diffeomorphism class.
\end{corollary}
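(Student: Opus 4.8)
The plan is to translate everything into Bott matrices and then reduce the possibilities to a two-element list. Writing the fibre $S^1$ as the first coordinate $z_1$, a manifold $M(A)=S^1\times_{\ZZ_2}M(B)$ with $M(B)=T^k\times_{\ZZ_2}S^1$ corresponds, after the moves {\bf I}--{\bf III}, to a $(k+2)\times(k+2)$ Bott matrix $A$ whose lower-right $(k+1)\times(k+1)$ block is the reduced form $\mathrm{diag}\bigl(I_{k-1},\bigl(\begin{smallmatrix}1&1\\0&1\end{smallmatrix}\bigr)\bigr)$ of $B$, the block $\bigl(\begin{smallmatrix}1&1\\0&1\end{smallmatrix}\bigr)$ being the Klein-bottle factor $T^1\times_{\ZZ_2}S^1$ (so $M(B)\cong(S^1)^{k-1}\times {\rm K}$), and whose first row is $(1,a_2,\dots,a_{k+2})$ with $a_j\in\{0,1\}$. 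These entries record exactly which coordinates $z_j$ of $M(B)$ the fibre generator $g_1$ conjugates; that is, they encode the induced $\ZZ_2$-action on $M(B)$ as in \eqref{inZs}.

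Next I would use maximality of the torus to cut this list down to two matrices. Since the asserted $S^1$ is the maximal torus of $M(A)$, the number of columns of $A$ having no nonzero entry above the diagonal must equal $1$: such a column is precisely a central translation direction, and these span the maximal central torus $\ZZ^k$ of Theorem \ref{T1}(i), so their count is a diffeomorphism invariant. Column $1$ is always of this type and column $k+2$ never is (the Klein coupling $(k+1,k+2)=1$ sits above its diagonal), while for $2\le j\le k+1$ column $j$ is of this type iff $a_j=0$. Maximality therefore forces $a_2=\dots=a_{k+1}=1$, leaving only $a_{k+2}\in\{0,1\}$ free; I will call the two resulting matrices $A^{(0)}$ (with $a_{k+2}=0$) and $A^{(1)}$ (with $a_{k+2}=1$). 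On $M(B)=T^{k+1}/(\ZZ_2)^{k+1}$ the associated actions are $g_1^{(0)}[z_2,\dots,z_{k+2}]=[\bar z_2,\dots,\bar z_{k+1},z_{k+2}]$ and $g_1^{(1)}[z_2,\dots,z_{k+2}]=[\bar z_2,\dots,\bar z_{k+1},\bar z_{k+2}]$.

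Finally I would show $A^{(0)}\sim A^{(1)}$ by means of Theorem \ref{T2}, exhibiting an equivariant diffeomorphism between these two $\ZZ_2$-actions on $M(B)$, which differ only in whether the fibre generator conjugates $z_{k+2}$. The candidate is $\varphi[z_2,\dots,z_{k+2}]=[z_2,\dots,z_k,{\bf i}z_{k+1},z_{k+2}]$. It is a well-defined self-diffeomorphism of $M(B)$ because multiplication by ${\bf i}$ in the slot $z_{k+1}$ commutes with each of the generators $g_2,\dots,g_{k+2}$ of $(\ZZ_2)^{k+1}$ (in particular with $g_{k+1}$, which flips $z_{k+1}$ and conjugates $z_{k+2}$). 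A direct computation then gives $\varphi g_1^{(0)}[z]=[\bar z_2,\dots,\bar z_k,{\bf i}\bar z_{k+1},z_{k+2}]$ and $g_1^{(1)}\varphi[z]=[\bar z_2,\dots,\bar z_k,-{\bf i}\bar z_{k+1},\bar z_{k+2}]$, and these represent the same point of $M(B)$ once one applies $g_{k+1}$, i.e. the relation $[z_{k+1},z_{k+2}]=[-z_{k+1},\bar z_{k+2}]$. Hence $\varphi g_1^{(0)}=g_1^{(1)}\varphi$, and by Theorem \ref{T2} the two matrices define the same diffeomorphism class, which proves the claim for every $k\ge 1$.

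The main obstacle is exactly this last equivariance verification: the two composites do not agree on the nose on $T^{k+1}$ but only after applying $g_{k+1}$, so the argument hinges on recognizing that their discrepancy (a sign on $z_{k+1}$ together with a conjugation on $z_{k+2}$) is precisely the action of $g_{k+1}$ on the Klein factor, which is why the correcting factor must be ${\bf i}$ rather than $-1$. A secondary point to pin down carefully is the reduction step, namely that maximality of the $S^1$-action genuinely eliminates every Bott matrix other than $A^{(0)}$ and $A^{(1)}$, so that the count is exactly one class and not more.
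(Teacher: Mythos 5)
Your proof is correct and takes essentially the same route as the paper: after reducing to the two Bott matrices whose first row is $(1\,|\,1,\dots,1,a_{k+2})$ with $a_{k+2}\in\{0,1\}$, you exhibit exactly the paper's equivariant diffeomorphism $\varphi([z_2,\dots,z_{k+1},z_{k+2}])=[z_2,\dots,{\bf i}z_{k+1},z_{k+2}]$ and conclude by Theorem \ref{T2}. The only differences are presentational: you justify the reduction to two matrices via the rank of the central translation subgroup (the paper uses its normal-form convention that every column beyond the maximal identity block has at least two nonzero entries), and you spell out the equivariance check, namely that the two composites agree only after applying the Klein generator $g_{k+1}$ — a verification the paper leaves implicit.
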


\begin{proof}
Since $M(B)=T^k\mathop{\times}_{\ZZ_2} S^1$, as shown in the proof
of Corollary \ref{cor1}, 
\begin{align*}{
B=\left (\begin{array}{cccc}
   & \mbox{\large$I_k$} &    & \mbox{\Large$\ast  $}     \\
   &       &   & 1 \\
 0         & \dots    & 0     & 1
\end{array}\right)}.
\end{align*}
The Bott matrices $A$ created from $B$ are
\begin{align*}{
\left (\begin{array}{c|cccc}
 1        & 1 & \dots & 1 & 1 \\
\hline
\mbox{\Large$0 $}         &       & \mbox{\large$B$} &    &     \\
\end{array}\right) \text{\nz and}
\left (\begin{array}{c|cccc}
 1        & 1 & \dots & 1 & 0 \\
\hline
\mbox{\Large$0 $}         &       & \mbox{\large$B$} &    &      \\
\end{array}\right)}
\end{align*}
which are equivalent by
the equivariant diffeomorphism
$\varphi\colon(\ZZ_2,M(B))\to(\ZZ_2,$ $M(B))$ defined by
$\varphi([z_2,\dots,z_{k+1},z_{k+2}])=[z_2,\dots,{\bf i}z_{k+1},z_{k+2}]$.
\end{proof}

\begin{corollary}\label{Torus}
For any $k\geq 2$, there are 3 diffeomorphism classes in $(k+3)$-dimensional
real Bott manifolds $M(A)=S^1\mathop{\times}_{\ZZ_2} M(B)$
where  $M(B)=T^k\mathop{\times}_{(\ZZ_2)^s} T^2$ $(s=1,2)$.
\end{corollary}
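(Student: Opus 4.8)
The plan is to reduce every such $M(A)$ to the maximal-torus normal form \eqref{newformA1} and then read off the count from Theorem \ref{Alg1}. First I would fix the normalized Bott matrix of $M(B)=T^k\times_{(\ZZ_2)^s}T^2$,
\[
B=\begin{pmatrix} I_{k-s} & 0 & 0\\ 0 & I_s & D\\ 0 & 0 & I_2\end{pmatrix},
\]
where $D$ is an $s\times 2$ matrix whose rows are distinct and nonzero (so for $s=1,2$ there are only finitely many shapes of $D$), and write the Bott matrix of $M(A)=S^1\times_{\ZZ_2}M(B)$ as $A=\left(\begin{smallmatrix}1 & \mathbf{c}\\ 0 & B\end{smallmatrix}\right)$ with $\mathbf{c}\in\{0,1\}^{k+2}$. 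Using moves {\bf I}, {\bf II}, {\bf III} together with the elementary equivariant diffeomorphisms of the kind constructed in Section \ref{EXAMPEL}, the entries of $\mathbf{c}$ lying in the free $I_{k-s}$-block can be cleared and the remaining entries normalized against $D$.

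The key structural observation I would then establish is that the base real Bott manifold $M(B')$ of $M(A)$ in its Seifert form \eqref{injectiveSeifert} is always $T^2$ or $S^1$, but never a Klein bottle. Indeed, the two base rows of $A$ are the identity rows coming from the $I_2$-block, and neither the insertion of $g_1$ (which alters only the first row) nor move {\bf III} (which composes two rows of the torus block) ever makes the generator of one base coordinate conjugate the other base coordinate; every conjugation that hits a base coordinate issues from a generator ($g_1$, or some $g_i$ of the $I_s$-block) whose own coordinate is half-translated, so each such coupling is of Klein-bottle type and contributes one circle to the maximal torus and one circle to the base. Hence the bottom-right irreducible block stays $I_2$ or collapses to $I_1$, and by Theorem \ref{T1} the manifold $M(A)$ has maximal torus $T^{k+1}$ over base $T^2$, or maximal torus $T^{k+2}$ over base $S^1$.

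With this dichotomy the count follows from results already in hand. When the base is $S^1$ (maximal torus $T^{k+2}$) one has $M(A)\cong T^{k+1}\times {\rm K}$, a single diffeomorphism class by Corollary \ref{cor1}. When the base is $T^2$ (maximal torus $T^{k+1}$), Theorem \ref{Alg1} applied with $n=k+3$ shows that the only possibilities with base $T^2$ are the two classes \eqref{newM1a} (with $s'=1$) and \eqref{newM1b} (with $s'=2$); the remaining two classes \eqref{newM2a-1} and \eqref{newM2b} of Theorem \ref{Alg1} have Klein-bottle base and are excluded by the previous paragraph. All three survivors are realized: taking $\mathbf{c}=0$ with $s=1$ and with $s=2$ (the latter needing $k\ge 2$) gives the base-$T^2$ classes $S^1\times M(B)$, while a $\mathbf{c}$ that conjugates a base coordinate already conjugated by $D$ produces, after a move {\bf III}, the product $T^{k+1}\times {\rm K}$. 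Finally Remark \ref{rem15} separates them: the maximal-torus dimension distinguishes $T^{k+1}\times {\rm K}$ from the other two, and the reduced number $s'$ of effective $\ZZ_2$-factors distinguishes \eqref{newM1a} from \eqref{newM1b}. This yields exactly $3$ classes.

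The main obstacle is the structural claim of the second paragraph — that no admissible $\mathbf{c}$ can force a Klein-bottle base — since this is exactly what separates the present $T^2$-case from the companion Klein-bottle case of Proposition C. Making it rigorous requires checking, for each shape of $D$ and each residue of $\mathbf{c}$ under moves {\bf II}, {\bf III}, that the induced action on the base pair $(z_{k+2},z_{k+3})$ is by coordinatewise conjugation only, so that Proposition \ref{T5} and Remark \ref{rem15} close the argument; the reduction of $\mathbf{c}$ against the several shapes of $D$ is routine but must be carried out case by case.
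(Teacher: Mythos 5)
Your key structural claim in the second paragraph is false, and the whole count is built on it. In the paper's conventions (Theorem \ref{T1} and the normal form \eqref{newformA1}), writing $M(A)=S^1\times_{\ZZ_2}M(B)$ means that $S^1$ is the \emph{maximal} torus of $M(A)$. A coordinate of the $I_k$-block of $B$ survives into the torus of $M(A)$ exactly when the first row of $A$ has a $0$ over it: its column then carries only the diagonal entry, move {\bf I} applies, and the identity block — hence the torus — grows. So membership in the family forces the first row to conjugate \emph{all} $k$ torus coordinates of $M(B)$, i.e.\ $\mathbf{c}=(1,\dots,1,\ast,\ast)$, and then the maximal torus of $M(A)$ is just $S^1$, not $T^{k+1}$ or $T^{k+2}$. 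Your assertion that each coupling ``contributes one circle to the maximal torus and one circle to the base'' has it backwards for the torus-block coordinates: when $g_1$ conjugates a coordinate of the $I_k$-block, that coordinate is expelled from the torus and pushed into the base, so the base of the maximal Seifert fibration is $(k+2)$-dimensional, never $T^2$ or $S^1$. A concrete counterexample is $k=2$ with first row $(1\mid 1,1,0,0)$ over $B_1$: this is the paper's matrix $A_{23}$, which appears in the paper's own five-dimensional classification under ``$S^1$-actions with $4$-dimensional quotients''; its maximal torus is $S^1$, so by Remark \ref{rem15} it is not diffeomorphic to any of your three candidates $T^{k+1}\times{\rm K}$, \eqref{newM1a}, \eqref{newM1b}, whose maximal tori have dimension at least $k+1$.

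Consequently Theorem \ref{Alg1} is not applicable here: it classifies manifolds whose maximal torus is $T^{n-2}$, whereas every manifold in the corollary's family has maximal torus $S^1$ (and $n=k+3\geq 5$). Your three representatives are not in the family at all — their maximal Seifert form is a $T^{k+1}$- or $T^{k+2}$-fibration over a $2$- or $1$-dimensional base, not an $S^1$-fibration over $M(B)/\ZZ_2$ — while the actual members, represented by first rows $(1\mid 1,\dots,1,0,0)$ and $(1\mid 1,\dots,1,1,0)$ over $B_1$ and $(1\mid 1,\dots,1,0,0)$ over $B_2$ (for $k=2$ these include $A_{23}$ and $A_{24}$), are missing from your count. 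Note also that your looser reading of the hypothesis cannot be rescued: if arbitrary $\mathbf{c}$ were allowed, the small-torus classes just named and your large-torus classes would be pairwise non-diffeomorphic by Remark \ref{rem15}, giving at least six classes, so the statement would be false as you construe it. The paper instead fixes the eight matrices $(1\mid 1,\dots,1,\ast,\ast)$ over $B_1,B_2$, collapses them to three classes by explicit equivariant diffeomorphisms (rotation by ${\bf i}$ in a suitable coordinate), and separates those three via the maximal fixed point sets ($T^2$ versus $S^1$, Proposition \ref{T5}) together with $M(B_1)\not\cong M(B_2)$ (Remark \ref{rem15}).
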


\begin{proof}
Since $M(B)=T^k\mathop{\times}_{(\ZZ_2)^s} T^2$ ($s=1,2$), there are 2 distinct
diffeomorphism classes of $M(B)$
which correspond to the following Bott matrices
\begin{align}\label{MTorus}
B_1=\left (\begin{array}{cc|cc}
\mbox{\large$I_k$}         &        &    \mbox{\Large$0$} & \mbox{\Large$0$}   \\
         &        & 1  & 1\\
\hline
         &        &  1  & 0 \\
\mbox{\Large$0$}  &        &  0  & 1 \\
\end{array}\right),\,\,
B_2=\left (\begin{array}{cc|cc}
         &        &    \mbox{\Large$0$} & \mbox{\Large$0$}   \\
\mbox{\large$I_k$}         &        & 1  & 0\\
                           &        & 0  & 1\\
\hline
         &        &  1  & 0 \\
\mbox{\Large$0$}  &        &  0  & 1 \\
\end{array}\right)
\end{align}\nz
(see (\ref{newM1a}) and (\ref{newM1b})).

The Bott matrices of size $(k+3)$ created from $(\ref{MTorus})$
with the $\ZZ_2$-actions are as follows
\begin{align*}
A_1&={
\left (\begin{array}{c|ccccc}
 1        & 1 & \dots & 1 & \ast    & \ast  \\
\hline
\mbox{\Large$0 $}          &   &      &   \mbox{\large$B_1$}  &    &  \\
\end{array}\right)},
A_2={
\left (\begin{array}{c|ccccc}
 1        & 1 & \dots & 1 & \ast    & \ast  \\
\hline
\mbox{\Large$0 $}          &   &      &   \mbox{\large$B_2$}  &    &  \\
\end{array}\right)}.
\end{align*} \nz
The following Bott matrices in (\ref{ATorus1}) (resp. (\ref{ATorus2})) derived from
$A_1$
\begin{align}\label{ATorus1}
{
\left (\begin{array}{c|ccccc}
 1        & 1 & \dots & 1 & 0   & 0 \\
\hline
\mbox{\Large$0 $}          &   &      &   \mbox{\large$B_1$}  &    &  
\end{array}\right)},
{
\left (\begin{array}{c|ccccc}
 1        & 1 & \dots & 1 & 1   & 1 \\
\hline
\mbox{\Large$0 $}          &   &      &   \mbox{\large$B_1$}  &    &  
\end{array}\right)}\\ \label{ATorus2}
{
\left (\begin{array}{c|ccccc}
 1        & 1 & \dots & 1 & 1   & 0 \\
\hline
\mbox{\Large$0 $}          &   &      &   \mbox{\large$B_1$}  &    &  
\end{array}\right)},
{
\left (\begin{array}{c|ccccc}
 1        & 1 & \dots & 1 & 0   & 1 \\
\hline
\mbox{\Large$0 $}          &   &      &   \mbox{\large$B_1$}  &    &  
\end{array}\right)}
\end{align}\nz
are equivalent by the equivariant diffeomorphism
\begin{align*}
&\varphi\colon(\ZZ_2,M(B_1))\to (\ZZ_2,M(B_1))\\
&\varphi([z_2,\dots,z_{k+1},z_{k+2},z_{k+3}])=[z_2,\dots,{\bf i}z_{k+1},z_{k+2},z_{k+3}].
\end{align*}

Moreover, Bott matrices in (\ref{ATorus1}) are not equivalent to
(\ref{ATorus2}) because the maximal fixed point sets of $(\ZZ_2,M(B_1))$
 corresponding
to  the Bott matrices in (\ref{ATorus1}) and (\ref{ATorus2})
 are $T^2$ and $S^1$ respectively.

It is easy to see that each Bott matrix derived from $A_2$ is equivalent to
\begin{align}\label{ATorus3}
\left (\begin{array}{c|ccccc}
 1        & 1 & \dots & 1 & 0   & 0 \\
\hline
\mbox{\Large$0 $}          &   &      &   \mbox{\large$B_2$}  &    &  
\end{array}\right)
\end{align} \nz
by 
$\varphi\colon(\ZZ_2,M(B_2))\to (\ZZ_2,M(B_2))$ which is defined by 
one of the following 
\begin{align*}
\varphi([z_2,\dots,z_k,z_{k+1},z_{k+2},z_{k+3}])&=[z_2,\dots,{\bf i}z_k,z_{k+1},z_{k+2},z_{k+3}],\\
\varphi([z_2,\dots,z_k,z_{k+1},z_{k+2},z_{k+3}])&=[z_2,z_3,\dots,z_k,{\bf i}z_{k+1},z_{k+2},z_{k+3}],\\
\varphi([z_2,\dots,z_k,z_{k+1},z_{k+2},z_{k+3}])&=[z_2,z_3,\dots,{\bf i}z_k,{\bf i}z_{k+1},z_{k+2},z_{k+3}].
\end{align*}
Obviously,  (\ref{ATorus3}) is not equivalent to the Bott matrices in (\ref{ATorus1}) and
(\ref{ATorus2}) because they are created from two nonequivalent Bott matrices in (\ref{MTorus}).
Therefore there are 3 equivalence classes of the Bott matrices corresponding to  $M(A)$.
\end{proof}

\begin{corollary}\label{KBottle}
For any $k\geq 2$, there are 3 diffeomorphism classes in $(k+3)$-dimensional
real Bott manifolds $M(A)=S^1\mathop{\times}_{\ZZ_2} M(B)$
where  $M(B)=T^k\mathop{\times}_{(\ZZ_2)^s} K$, $($K=Klein bottle, $s=1,2)$.
\end{corollary}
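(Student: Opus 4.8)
The plan is to run the argument of Corollary \ref{Torus} with the Klein bottle $K$ in place of the torus base $T^2$. First I would quote Theorem \ref{Alg1}: the case (\ref{M2}) treated there, whose base is the Klein bottle, is exactly our $M(B)=T^k\mathop{\times}_{(\ZZ_2)^s}K$ (with $n=k+2$). Hence $M(B)$ falls into precisely two diffeomorphism classes, represented by the $(k+2)\times(k+2)$ Bott matrices $\tilde B_1$ of type (\ref{newM2a-2}) (the reduced $\ZZ_2$-action) and $\tilde B_2$ of type (\ref{newM2b}) (the genuine $(\ZZ_2)^2$-action), and these are inequivalent. Exactly as in Corollary \ref{Torus} (see also Remark \ref{rem15}(ii)), any $M(A)$ fibred over $\tilde B_1$ is then inequivalent to any $M(A)$ fibred over $\tilde B_2$, so the two sub-families may be counted separately.

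Next I would write down the size-$(k+3)$ Bott matrices produced by the fibration $M(A)=S^1\mathop{\times}_{\ZZ_2}M(B)$,
\[
A=\left(\begin{array}{c|c}
1 & \ast\\
\hline
\mbox{\Large$0$} & \tilde B_i
\end{array}\right),
\]
where the $1\times(k+2)$ top block $\ast$ has entries in $\{0,1\}$. For the family over $\tilde B_1$ I would apply the equivariant diffeomorphisms $\varphi$ that multiply a single torus coordinate by ${\bf i}$, exactly as in Corollary \ref{Torus}, to reduce the many choices of $\ast$ to two normal forms. These two are then separated by the induced $\ZZ_2$-action on $M(\tilde B_1)$: in one the maximal fixed point set is two-dimensional and in the other it is one-dimensional, so Proposition \ref{T5} shows they are inequivalent, giving two classes.

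For the family over $\tilde B_2$ I would again use the rescalings $\varphi$ to collapse every admissible $\ast$ to a single normal form, yielding one class; this is the analogue of the reduction to (\ref{ATorus3}) in Corollary \ref{Torus}. Combining the three surviving representatives --- two over $\tilde B_1$ and one over $\tilde B_2$, mutually distinct by the first paragraph --- gives the asserted total of $3$.

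The hard part will be the fixed-point bookkeeping for the two children of $\tilde B_1$. Because $K$ already carries a conjugation in its last coordinate (coming from the off-diagonal $1$ in its defining Bott matrix), the conjugations contributed by the top block $\ast$ compose with that pre-existing one, and one must track the dimension and the number of components of each fixed locus to decide which choices of $\ast$ produce a two-dimensional maximal fixed set and which produce only a circle. This computation is the genuine obstacle and is slightly heavier here than in the torus case of Corollary \ref{Torus}.
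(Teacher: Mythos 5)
Your proposal is correct and follows essentially the same route as the paper: two base classes $B_1,B_2$ coming from the Klein-bottle case of Theorem \ref{Alg1}, separation of the two sub-families because the bases are inequivalent, reduction of the top-row choices by the ${\bf i}$-rescaling equivariant diffeomorphisms, and then Proposition \ref{T5} applied to the maximal fixed point sets ($T^2$ versus $S^1$) to split the $B_1$-family into two classes while the $B_2$-family collapses to one. The "fixed-point bookkeeping" you flag is exactly what the paper records in its groupings (\ref{AKB1})--(\ref{AKB2}) (note the pairing $\{(0,0),(1,0)\}$ and $\{(0,1),(1,1)\}$, which differs from the torus case precisely because of the Klein bottle's built-in conjugation), so your outline matches the published argument.
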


\begin{proof}
Since $M(B)=T^k\mathop{\times}_{(\ZZ_2)^s} K$ $(s=1,2)$, there are 2 distinct
diffeomorphism classes of $M(B)$
corresponding to the following Bott matrices (see (\ref{newM2a-2}) and (\ref{newM2b}))
\begin{align}\label{MKB}
B_1=\left (\begin{array}{cc|cc}
\mbox{\large$I_k$}         &        &    \mbox{\Large$0$} & \mbox{\Large$0$}   \\
         &        & 1  & 0\\
\hline
         &        &  1  & 1 \\
\mbox{\Large$0$}  &        &  0  & 1 \\
\end{array}\right),   
B_2=\left (\begin{array}{cc|cc}
         &        &    \mbox{\Large$0$} & \mbox{\Large$0$}   \\
\mbox{\large$I_k$}         &        & 1  & 0\\
         &        & 0  & 1\\
\hline
         &        &  1  & 1 \\
\mbox{\Large$0$}  &        &  0  & 1 \\
\end{array}\right).
\end{align}\nz

The Bott matrices of size $(k+3)$ created from $(\ref{MKB})$
with the $\ZZ_2$-actions are as follows
\begin{align*}
A_1&={
\left (\begin{array}{c|ccccc}
 1        & 1 & \dots & 1 & \ast    & \ast  \\
\hline
\mbox{\Large$0 $}          &   &      &   \mbox{\large$B_1$}  &    &  
\end{array}\right)},
A_2={
\left (\begin{array}{c|ccccc}
 1        & 1 & \dots & 1 & \ast    & \ast  \\
\hline
\mbox{\Large$0 $}          &   &      &   \mbox{\large$B_2$}  &    &  
\end{array}\right)}.
\end{align*} \nz

The following Bott matrices in (\ref{AKB1}) (resp. (\ref{AKB2})) derived from $A_1$
\begin{align}\label{AKB1}
{
\left (\begin{array}{c|ccccc}
 1        & 1 & \dots & 1 & 0   & 0 \\
\hline
\mbox{\Large$0 $}          &   &      &   \mbox{\large$B_1$}  &    &  
\end{array}\right)},
{
\left (\begin{array}{c|ccccc}
 1        & 1 & \dots & 1 & 1   & 0 \\
\hline
\mbox{\Large$0 $}          &   &      &   \mbox{\large$B_1$}  &    &  
\end{array}\right)}\\ \label{AKB2}
{
\left (\begin{array}{c|ccccc}
 1        & 1 & \dots & 1 & 0   & 1 \\
\hline
\mbox{\Large$0 $}          &   &      &   \mbox{\large$B_1$}  &    &  
\end{array}\right)},
{
\left (\begin{array}{c|ccccc}
 1        & 1 & \dots & 1 & 1   & 1 \\
\hline
\mbox{\Large$0 $}          &   &      &   \mbox{\large$B_1$}  &    &  
\end{array}\right)}
\end{align}\nz
are equivalent by the equivariant diffeomorphism
\begin{align*}
&\varphi\colon(\ZZ_2,M(B_1))\to (\ZZ_2,M(B_1))\\
&\varphi([z_2,\dots,z_{k+1},z_{k+2},z_{k+3}])=[z_2,\dots,{\bf i}z_{k+1},z_{k+2},z_{k+3}].
\end{align*}

On the other hand, Bott matrices in (\ref{AKB1}) are not equivalent to
 (\ref{AKB2}) because the maximal fixed point sets
of $(\ZZ_2,M(B))$ corresponding
to  the Bott matrices in (\ref{AKB1}) and (\ref{AKB2})
 are $T^2$ and $S^1$ respectively.

It is easy to see that each Bott matrix derived from $A_2$ is equivalent to
\begin{align}\label{AKB3}
{
\left (\begin{array}{c|ccccc}
 1        & 1 & \dots & 1 & 0   & 0 \\
\hline
\mbox{\Large$0 $}          &   &      &   \mbox{\large$B_2$}  &    &  
\end{array}\right)}
\end{align} \nz
by $\varphi\colon(\ZZ_2,M(B_2))\to (\ZZ_2,M(B_2))$ which is defined by 
one of the following 
\begin{align*}
\varphi([z_2,\dots,z_k,z_{k+1},z_{k+2},z_{k+3}])&=[z_2,\dots,{\bf i}z_k,z_{k+1},z_{k+2},z_{k+3}],\\
\varphi([z_2,\dots,z_k,z_{k+1},z_{k+2},z_{k+3}])&=[z_2,z_3,\dots,z_k,{\bf i}z_{k+1},z_{k+2},z_{k+3}],\\
\varphi([z_2,\dots,z_k,z_{k+1},z_{k+2},z_{k+3}])&=[z_2,z_3,\dots,{\bf i}z_k,{\bf i}z_{k+1},z_{k+2},z_{k+3}].
\end{align*}
Obviously, the Bott matrix  (\ref{AKB3}) is not equivalent to the Bott matrices in (\ref{AKB1}) and
 (\ref{AKB2}) because they are created from two nonequivalent Bott matrices in (\ref{MKB}). 
Therefore there are 3 equivalence classes of the  Bott matrices corresponding to  $M(A)$.
\end{proof}

\begin{proposition}\label{prop1}
If $M(A)=T^k\mathop{\times}_{\ZZ_2} T^{n-k}$, then for any $n\geq 2$ and $k\geq 1$ there is only
one diffeomorphism class.
\end{proposition}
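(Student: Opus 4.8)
The plan is to put $A$ into the normal form \eqref{newformA1} and show that this normal form is forced to be unique. Since $M(A)=T^k\times_{\ZZ_2}T^{n-k}$ realizes the maximal torus decomposition of Theorem \ref{T1} with $s=1$ and $M(B)=T^{n-k}$ (so that $B=I_{n-k}$), the Bott matrix reduces to
\begin{align*}
A=\left(\begin{array}{c|c|c}
I_{k-1} & \mbox{\Large$0$} & \mbox{\Large$0$}\\
\hline
\mbox{\Large$0$} & 1 & c\\
\hline
\mbox{\Large$0$} & \mbox{\Large$0$} & I_{n-k}
\end{array}\right),\qquad c=(c_{k+1},\dots,c_n)\in\{0,1\}^{n-k},
\end{align*}
where the single generator of the effective $\ZZ_2$ conjugates the coordinate $z_j$ ($k<j\le n$) exactly when $c_j=1$ (here $I_{k-1}$ is empty if $k=1$). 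It therefore suffices to prove that $c=(1,\dots,1)$, for then the normal form, and hence $M(A)$, is literally unique.

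First I would record what the two hypotheses say about the off-diagonal block $C$ of the pre-reduced matrix $A'$ of \eqref{formA1}. The maximality of the block $I_k$ is precisely the condition noted after \eqref{formA1}, namely that each of the columns $k+1,\dots,n$ of $A'$ carries at least two nonzero entries; since $B=I_{n-k}$ contributes a single diagonal $1$ to each such column, this says exactly that $C$ has no zero column. On the other hand, moves {\bf II} and {\bf III} act on $C$ as $\FF_2$-row operations (deleting a zero row, replacing a pair of equal rows by a single one and absorbing a circle into the torus), so the effective number $s$ of $\ZZ_2$-factors equals $\rank_{\FF_2}C$; the hypothesis $s=1$ thus means that the row space of $C$ is spanned by one nonzero vector, which is precisely the surviving conjugation pattern $c$.

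Combining the two, every row of $C$ is either $0$ or $c$, so the $j$-th column of $C$ is nonzero if and only if $c_j=1$; since no column of $C$ vanishes, $c_j=1$ for all $j$, that is $c=(1,\dots,1)$. Hence the normal form displayed above is uniquely determined, the induced $\ZZ_2$-action on $M(B)=T^{n-k}$ is forced to be $[z_{k+1},\dots,z_n]\mapsto[\bar z_{k+1},\dots,\bar z_n]$, and Theorem \ref{T2} yields a single diffeomorphism class of $M(A)$ for every admissible $n$ and $k$.

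The only delicate point, and the step I expect to require care, is the middle one: one must verify that the reduction of $(\ZZ_2)^k$ to the effective $(\ZZ_2)^s$ by iterating moves {\bf II} and {\bf III} is genuinely $\FF_2$-row reduction of $C$, so that indeed $s=\rank_{\FF_2}C$ and the support of the surviving pattern $c$ equals the set of nonzero columns of $C$. Granting this dictionary between the geometric moves and linear algebra over $\FF_2$, the maximality condition and the constraint $s=1$ together pin down $c$ with no remaining freedom, which is exactly why the diffeomorphism class is unique.
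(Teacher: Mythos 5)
Your proposal is correct and follows essentially the same route as the paper: reduce $A$ to the normal form \eqref{newformA1} with $B=I_{n-k}$ and $s=1$, and observe that maximality of the block $I_k$ (each column over $B$ needs a second nonzero entry) forces the single effective row to be $(1,\dots,1)$, so the Bott matrix \eqref{A_one} is unique. The paper states this in one sentence; you have merely made explicit the dictionary (no zero columns in $C$, rank-one row space) that the paper leaves implicit, and in the rank-one case your identification of moves \textbf{II}, \textbf{III} with $\FF_2$-row reduction is unproblematic since every row of $C$ is $0$ or $c$.
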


\begin{proof}
Since $M(A)$ admits the maximal $T^k$-action and  $A$ is created
from $I_{n-k}$, there is only one the 
Bott matrix $A$, namely
\begin{align}\label{A_one}
A={
\left (\begin{array}{c|ccc}
\mbox{\large$I_{k}$} & & \mbox{\Large$0$} & \\
         & 1 &  \dots & 1  \\
\hline
\mbox{\Large$0 $}          &   &   \mbox{\large$I_{n-k}$}    &
\end{array}\right)}.
\end{align}
\end{proof}

Now if we create Bott matrices from (\ref{A_one}) (for $k=1$) with $\ZZ_2$-actions then we will get a classification
of  the corresponding real Bott manifolds as follows.

\begin{theorem}\label{greatest}
For any $k\geq 2$, there are $[\frac{k}{2}]+1$
diffeomorphism classes in $(k+2)$-dimensional
real Bott manifolds $M(A_i)=S^1\mathop{\times}_{\ZZ_2} M(B)$
$(i=1,\dots,2^k)$,
where  $M(B)=S^1\mathop{\times}_{\ZZ_2} T^k$. 
Here $[x]$ is the Gauss
integer.
\end{theorem}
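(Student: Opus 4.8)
The plan is to describe each $M(A_i)$ by its Bott matrix and then, via Theorem \ref{T2}, reduce everything to classifying the induced $\ZZ_2$-actions on the fixed manifold $M(B)=S^1\times_{\ZZ_2}T^k$ up to equivariant diffeomorphism. Write the coordinates of $M(B)$ as $[y_0,y_1,\dots,y_k]$, with $y_0$ the base circle and $y_1,\dots,y_k$ the fibre torus; the defining involution $\sigma$ of $M(B)$ flips $y_0$ and conjugates all of $y_1,\dots,y_k$, and $\sigma$ together with the fibre half-period translations $\tau_1,\dots,\tau_k$ (the rows of the $I_k$-block of $B$) and the lattice $\ZZ^{k+1}$ generate $\pi(B)$. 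A matrix $A$ of the required shape has first row $(1,1,c_2,\dots,c_{k+1})$, so the new generator $g_1$ flips the outer circle, conjugates the base $y_0$, and conjugates exactly those fibre coordinates $y_j$ with $c_{j+1}=1$. The entry $1$ in the base column is forced: if $y_0$ were not conjugated, then on $S^1_{\mathrm{outer}}\times S^1_{y_0}$ the two flips of $g_1$ and $\sigma$ would generate a free $T^2$, the maximal torus would grow, and $M(A)$ would fall under the $T^2$-action case instead. This leaves precisely the $2^k$ matrices indexed by the fibre pattern, and the resulting action on $M(B)$ is determined by the set $S_f$ of conjugated fibre coordinates, of size $m=c_2+\dots+c_{k+1}\in\{0,1,\dots,k\}$.

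First I would carry out two reductions. Since the first row of $B$ is $(1,\dots,1)$ and the rest is $I_k$, the fibre coordinates are interchangeable: every permutation is realized by a permutation matrix in $\GL(k,\ZZ)$, giving an equivariant diffeomorphism of $M(B)$, so the class depends only on $m$ and not on the particular subset $S_f$. Next I would establish the complementation $m\leftrightarrow k-m$. Let $\varphi$ be the affine diffeomorphism of $M(B)$ covered by $y_0\mapsto y_0+\tfrac14$ (fibre fixed); one checks it normalizes $\pi(B)$ and descends. Because $g_1$ conjugates $y_0$, the map $\varphi g_1\varphi^{-1}$ is covered by $y_0\mapsto -y_0+\tfrac12$ with the same fibre conjugations, and composing with the deck element $\sigma$ (which adds $\tfrac12$ to $y_0$ and conjugates every fibre coordinate) rewrites this as ``conjugate $y_0$, conjugate the complementary set $S_f^{\,c}$.'' Thus $\varphi$ is an equivariant diffeomorphism carrying the action with parameter $m$ to the one with parameter $k-m$, so by Theorem \ref{T2} the corresponding manifolds are diffeomorphic. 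After these two steps at most $[\frac{k}{2}]+1$ classes survive, indexed by $\min(m,k-m)\in\{0,1,\dots,[\frac{k}{2}]\}$.

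It then remains to prove that these $[\frac{k}{2}]+1$ values are genuinely distinct, and for this I would invoke Proposition \ref{T5}, computing the fixed-point set of $g_1$ on $M(B)$ as a function of $m$. Solving $g_1\mathbf{y}=\gamma\mathbf{y}$ coordinatewise splits into the two cases according to the linear part of $\gamma$: when $\gamma\in\langle\tau_1,\dots,\tau_k\rangle\cdot\ZZ^{k+1}$ the conjugated coordinates (the $m$ fibre ones and the base) are pinned to isolated half-period values while the $k-m$ fixed fibre coordinates sweep out free circles; when $\gamma\in\sigma\langle\tau_1,\dots,\tau_k\rangle\cdot\ZZ^{k+1}$ the roles of $S_f$ and $S_f^{\,c}$ interchange. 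Reading off the number of components and their dimensions from these two families yields a multiset of fixed-set data that is symmetric under $m\leftrightarrow k-m$ (consistent with the complementation above) and that I expect to depend on $m$ only through $\min(m,k-m)$ and to separate its distinct values, so that no two of the retained classes are equivariantly diffeomorphic.

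The main obstacle is precisely this last fixed-point computation: one must track the deck identifications on each coordinate — the half-period translations $\tau_j$ and the global conjugation $\sigma$ — carefully enough to count components and read off their types, and then confirm that this data is a \emph{complete} invariant separating $\min(m,k-m)=0,1,\dots,[\frac{k}{2}]$. A secondary point needing care is the normalization to first row $(1,1,c_2,\dots,c_{k+1})$, i.e.\ verifying that the base column must carry a $1$ for the outer $S^1$ to remain maximal, so that exactly the stated $2^k$ matrices arise.
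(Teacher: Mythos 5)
Your proposal is correct and follows essentially the same route as the paper's proof: the paper likewise restricts to the $2^k$ matrices with a forced $1$ over the base column, reduces scattered patterns to consecutive ones by coordinate permutations (move {\bf I}), realizes the complementation $m\leftrightarrow k-m$ by composing $g_1$ with the deck element $g_2$ (your $\sigma$) and conjugating by the quarter-rotation $z_2\mapsto {\bf i}z_2$ of the base circle, and then separates the surviving classes by fixed-point data via Proposition \ref{T5}. The fixed-point computation you flag as the main remaining obstacle is precisely the step the paper dismisses as ``easy to check'' (maximal fixed sets $T^k, T^{k-1},\dots,T^{k-[\frac{k}{2}]}$), and in fact your own two-family description of the fixed components (dimensions $k-m$ and $m$) already yields the separating invariant $\max(m,k-m)=k-\min(m,k-m)$, so nothing essential is missing.
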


\begin{proof}
Since $M(B)=S^1\mathop{\times}_{\ZZ_2} T^k$,
\begin{equation}\label{M3}
B={
\left (\begin{array}{c|ccc}
   1      & 1 & \dots   & 1  \\
\hline 
  \mbox{\large$0$} &       &  \mbox{\large$I_k$}      &
\end{array}\right)}.
\end{equation}
The Bott matrices $A_i$ of size $(k+2)$ created from (\ref{M3}) with
$\ZZ_2$-actions are as follows
\begin{equation}\label{M4}
A_i={
\left (\begin{array}{c|ccc}
  1       & 1 & \mbox{\Large$\ast $}    &  \\
\hline
         &  1      &  \dots & 1   \\
  \mbox{\Large$0$} &       &  \mbox{\large$I_k$}      &
\end{array}\right)=
\left (\begin{array}{c|ccc}
  1       & 1 & \mbox{\Large$\ast $}    &  \\
\hline
\\
  \mbox{\Large$0$} &       &  \mbox{\large$B$}      &
\end{array}\right)}, \,\, (i=1,\dots,2^{k}).
\end{equation}
We apply the different $\ZZ_2$-actions on $M(B)$
such that the  Bott matrices $A_i$ are as follows
\begin{align}\label{MK}
\begin{split}
A_1&={ \left (\begin{array}{c|cccc}
 1        & \{1\}_{2} & \{0\}_{3} & \dots    & \{0\}_{2+k} \\
\hline
  \mbox{\Large$0$} &       &  \mbox{\large$B$}      &
\end{array}\right)},\\
A_2&={\left (\begin{array}{c|ccccc}
 1        & \{1\}_{2} & \{1\}_{3} & \{0\}_{4} & \dots    & \{0\}_{2+k} \\
\hline
  \mbox{\Large$0$} &    &   &  \mbox{\large$B$}      &
\end{array}\right)},\\
&\vdots\\
A_{[\frac{k}{2}]+1}&={\left (\begin{array}{c|cccccc}
 1        & \{1\}_{2} & \dots & \{1\}_{1+([\frac{k}{2}]+1)}& \{0\}_{2+([\frac{k}{2}]+1)} & \dots    & \{0\}_{2+k} \\
\hline
  \mbox{\Large$0$} &    &   &  \mbox{\large$B$}      &
\end{array}\right)}.
\end{split}
\end{align}
 It is easy to check that the maximal fixed point sets of
$(\ZZ_2,M(B))$ corresponding to $A_i$ ($i=1,2,\dots,[\frac{k}{2}]+1$) are
$T^k$, $T^{k-1}$, \dots, $T^{k-[\frac{k}{2}]}$ respectively. Hence 
they are not equivalent to each other. 
Here $\{y\}_i$ means $y$ in the $i$-th spot.

On the other hand, for ${[\frac{k}{2}]+1}< l\leq (k+1)$, Bott matrix
\begin{align}\label{ML} 
\left (\begin{array}{c|cccccc}
 1        & \{1\}_{2} & \dots & \{1\}_{1+l}& \{0\}_{2+l} & \dots    & \{0\}_{2+k} \\
\hline
  \mbox{\Large$0$} &    &   &  \mbox{\large$B$}      &
\end{array}\right)
\end{align}
is equivalent to one of the Bott matrices in (\ref{MK}). To show this, consider
the $g_1$-action corresponding to (\ref{ML}):
\begin{align*}
g_1([z_2,\dots,z_{1+l},z_{l+2},\dots,z_{k+2}])&=[\bar z_2,\dots,\bar z_{l+1},\overbrace{z_{l+2},\dots,z_{k+2}}^{(k+1)-l}]\\
                            &=[g_2(\bar z_2,\dots,\bar z_{l+1},z_{l+2},\dots,z_{k+2})]\\
                    &=[-\bar z_2,z_3,\dots,z_{l+1},\bar z_{l+2},\dots,\bar z_{k+2}].
\end{align*}
Since $(k+1)-l<k-[\frac{k}{2}]\leq [\frac{k}{2}]+1$, there is an 
  equivariant diffeomorphism $\varphi \colon(\ZZ_2,M(B))\to (\ZZ_2,M(B))$ defined by
$\varphi([z_2,\dots,z_{l+1},z_{l+2},\dots,z_{k+2}])=
[{\bf i}z_2,$ $z_{l+2},\dots,z_{k+2},z_{3},\dots,z_{l+1}]$
such that $\varphi g_1=h_1\varphi$ for some $h_1$-action corresponding to one of
the Bott matrices in (\ref{MK}).

The other Bott matrices $A_i$ $(i\not=1,\dots,[\frac{k}{2}]+1)$ may have the form
\begin{equation}\label{M4new}
A'={
\left (\begin{array}{c|cccc}
  1       & 1 & \hat 1 & \dots    & \hat 1  \\
\hline
  \mbox{\Large$0$} &   &    &  \mbox{\large$B$}      &
\end{array}\right)},
\end{equation}
where $\hat 1\in\{0,1\}$.
If the number of entries $\hat 1$ for $\hat 1=1$ is less than or equal to $[\frac{k}{2}]$ then
$A'$ is equivalent to one of the Bott matrices in $(\ref{MK})$, otherwise
$A'$ is equivalent to  $(\ref{ML})$. We shall prove it in the following way.
Suppose that the number of entries $\hat 1$ for $\hat 1=1$ is $t$. Applying move {\bf I}  on 
$A'$ such that the entries $\hat 1$ for $\hat 1=1$ are placed in series, 
we get a new Bott matrix 
\begin{equation*}\label{M4newnew}
A''={
\left (\begin{array}{c|ccccccc}
  1       & \{1\}_{2} & \{1\}_{3} & \dots    & \{1\}_{2+t} & \{0\}_{3+t} & \dots & \{0\}_{2+k} \\
\hline
  \mbox{\Large$0$} &   &   & &  \mbox{\large$B$}      &
\end{array}\right)},
\end{equation*}
which is still equivalent to $A'$.
Obviously, $A''=A_i$ for some $i=1,\dots,[\frac{k}{2}]+1$ if $0\leq t\leq [\frac{k}{2}]$,
or $A''$ is the same as  (\ref{ML}) if $t>[\frac{k}{2}]$.
Hence $A'$ is equivalent to one of the Bott matrices
in (\ref{MK}). This completes the proof of theorem.
\end{proof}


From now on, we use the notation $(\ZZ_2,M(B_j))_i$ which means that the $\ZZ_2$-action on $M(B_j)$
corresponds to a Bott matrix $A_{ij}$.

\begin{lemma}\label{lem1}
Let $M(A_{i1})=S^1\times_{\ZZ_2}M(B_1)$ $(i=1,\dots,n-2)$ be $n$-dimensional real Bott manifolds creating
from an $(n-1)$-dimensional real Bott manifold $M(B_1)$. Such real Bott manifolds $M(A_{i1})$ corresponding to
 $A_{i1}$ in \eqref{star1}
are not diffeomorphic to each other.
\begin{align}\label{star1}
\begin{split}
A_{11}&={\left (\begin{array}{c|cccc}
 1        & \{1\}_2 & \{0\}_3 & \dots    & \{0\}_{n} \\
\hline
  \mbox{\Large$0$} &       &  \mbox{\large$B_{1}$}      &
\end{array}\right)},\\
A_{21}&={ \left (\begin{array}{c|ccccc}
 1        & \{1\}_2 & \{1\}_3 & \{0\}_4 & \dots    & \{0\}_{n} \\
\hline
  \mbox{\Large$0$} &    &   &  \mbox{\large$B_{1}$}      &
\end{array}\right)},\\
&\vdots\\
A_{(n-k-1)1}&={\left (\begin{array}{c|ccccccc}
 1        & \{1\}_{2} & \{1\}_{3} & \dots & \{1\}_{1+(n-k-1)}& \{0\}_{1+(n-k)} & \dots & \{0\}_{n}\\
\hline
\mbox{\Large$0$}         &       &        &  \mbox{\large$B_{1}$}
\end{array}\right)},\\
&\vdots\\
A_{(n-2)1}&={ \left (\begin{array}{c|ccccc}
 1        & \{1\}_{2} & \{1\}_{3} & \dots & \{1\}_{1+(n-2)}& \{0\}_{n)}\\
\hline
\mbox{\Large$0$}         &       & & \mbox{\large$B_{1}$}      &
\end{array}\right)},
\end{split}
\end{align}
where
\begin{align}\label{matrixB1}
B_{1}={
\left (\begin{array}{ccccccc}
 1 & 1 & 1& \dots & \dots& \dots& 1 \\
   & 1 & 1& \dots & \dots& \dots& 1  \\
   &   & \ddots & & & & \vdots\\
   &   &        & 1 & 1 & \dots &  1\\
   &   &   &\\
   & \mbox{\Large$0$} & &  &  & \mbox{\large $I_k$}   &  \\
  &
\end{array}\right),}
\end{align}
$k\geq 2$, and $n-k\geq 3$.
\end{lemma}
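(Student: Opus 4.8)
The plan is to reduce everything to the fixed-point invariant of Proposition \ref{T5} via the rigidity theorem. Every $M(A_{i1})$ is obtained from the \emph{same} base $M(B_1)$ by a single $\ZZ_2$-action, so all of them share the values $k=1$ and $s=1$ and the same diffeomorphism type of the base; by Theorem \ref{T2}(iii) it therefore suffices to show that the $n-2$ actions $(\ZZ_2,M(B_1))_i$ are pairwise non-equivariantly diffeomorphic. By Proposition \ref{T5} this will follow once I exhibit, for distinct $i$, a discrepancy in the number of components and in the types of the positive-dimensional fixed-point subsets.

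First I would compute these fixed sets explicitly. The first row of $A_{i1}$ makes $\ZZ_2=\langle g_1\rangle$ act on $M(B_1)$ by conjugating the first $i$ coordinates; lifting to $T^{n-1}$, a class $[w]$ is fixed exactly when $g_1(w)=h\cdot w$ for some $h\in(\ZZ_2)^{n-1}$, so each admissible $h$ contributes an affine subtorus and the components of the fixed set are the $(\ZZ_2)^{n-1}$-orbits of these subtori. Because $B_1$ is the all-ones matrix of \eqref{matrixB1}, the conjugation that $h$ induces on the $l$-th coordinate is simply the parity of the first $l-1$ generators occurring in $h$; this lets me parametrize the admissible $h$ by their prefix-parity sequence, impose consistency of the affine equation, and read off the maximal fixed tori together with which coordinates remain free.

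The decisive ingredient is the type. Under the decomposition \eqref{matrixB}--\eqref{finalform}, $M(B_1)$ splits as $T^{b_2}\times\cdots\times T^{b_\ell}$, where each coordinate of the all-ones block $U_{n-1-k}$ forms its own $1$-dimensional factor and the $I_k$-block forms a single $k$-dimensional factor; since the admissible affine maps of \eqref{matrixAE} are block-diagonal, an equivariant diffeomorphism preserves each factor, hence preserves both the \emph{subset} of free $U$-coordinates and the number of free $I$-coordinates of a fixed torus. I would then verify that for $2\le i\le n-k-2$ a maximal fixed torus is an $(n-3)$-torus whose free $U$-coordinates are $\{2,\dots,n-1-k\}\setminus\{i+1\}$, a single ``gap'' at position $i+1$ that records $i$, while the boundary regimes $i=1$, $i=n-1-k$, and $n-1-k<i\le n-2$ (where the conjugation reaches into the $I_k$-block) are separated by the dimension of the maximal torus, the number of free $I$-coordinates, and the number of top components.

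The hard part will be bookkeeping rather than any single idea: the raw dimension of the maximal fixed set is \emph{not} injective in $i$ (it folds about $i=n-1-k$, and two different middle values of $i$ can even share dimension, top-component count, \emph{and} block-sizes), so the argument must track the full profile---dimension, number of components, and the precise type (the free-coordinate subset) of every positive-dimensional fixed subset---across the four regimes above. The most delicate step is justifying that the individual coordinates of the all-ones block are rigid, i.e.\ cannot be permuted by any equivariant affine map, so that the free-coordinate \emph{subset}, and not merely its cardinality, is a legitimate invariant; this rigidity is exactly what ultimately separates the otherwise-indistinguishable neighbouring values of $i$ in the middle range.
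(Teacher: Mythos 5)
Your plan is correct in outline and its central computation checks out, but it is a genuinely different route from the paper's own proof. The paper never computes the fixed-point profile of the actions $(\ZZ_2,M(B_1))_i$ downstairs in $M(B_1)$: it assumes an equivariant diffeomorphism exists, lifts it to the affine map $\ti\varphi$ of $T^{n-1}$ whose rigid block form comes from \eqref{af_el_2} (hence from \eqref{matrixAE}--\eqref{finalform}, i.e.\ \cite{K08}), writes the conjugation identity $\ti\varphi\al=g\be\ti\varphi$ of \eqref{g_beta} for an unknown deck element $g\in(\ZZ_2)^{n-1}$, and then eliminates every candidate $g$ by coordinatewise sign/conjugation incompatibilities (cases a), b1), b2), c)), falling back on a fixed-set dimension count only in case b3), where $\dim Fix(\al,T^{n-1})=k-l-1$ is compared with $\dim Fix(g\be,T^{n-1})=k-p-1$. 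You instead propose to compute the fixed data of the action on $M(B_1)$ itself and invoke Proposition \ref{T5} plus type preservation --- exactly the method the paper uses for the five-dimensional classification in Sections \ref{EXAMPEL}--\ref{S2}, pushed to general $n$. Your key claim is right: in the paper's coordinates $z_2,\dots,z_n$, for $2\le i\le n-k-2$ the stratum cut out by $h=g_2g_{i+2}$ has the maximal dimension $n-3$ and its free $U$-coordinates omit exactly position $i+2$, so the ``gap'' records $i$; and the coordinate rigidity you single out as delicate is precisely the block-diagonal form \eqref{finalform}, so it is available off the shelf rather than something you must prove. What each approach buys: yours produces explicit, transparent invariants, but must enumerate all positive-dimensional strata, track their identification into components under the deck group, and compare full profiles across all pairs --- and in the folded regime $n-k\le i\le n-2$ the maximal dimension $\max(k-l-1,\,n-k+l-1)$ is not injective in $l$, so the finer split into free-$U$-subset and free-$I$-count that you mention is genuinely indispensable (for $n=7$, $k=4$ the actions with $i=3$ and $i=4$ both have maximal fixed dimension $3$, with types ``$0$ free $U$, $3$ free $I$'' versus ``$1$ free $U$, $2$ free $I$''); the paper's method avoids all of this bookkeeping, needing only one contradiction per candidate $g$ at the price of a long but mechanical case analysis.

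The one caveat: your text defers the regime-by-regime verification (``I would then verify\dots''), and that deferred part is where essentially all the work of the lemma lives, comparable in bulk to the paper's cases a)--c). In particular the cross-regime comparison should be made explicit; a clean way to do it is the invariant implicit in the paper's case c): for $i\le n-k-1$ the action has a positive-dimensional fixed component with all $k$ of the $I$-coordinates free (e.g.\ the $h=e$ stratum), while for $i\ge n-k$ no component has this property, since the $I$-coordinates conjugated by $\al$ and those left fixed would require the deck element to contain both an odd and an even number of $U$-generators.
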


\begin{proof}
Recall that,
if $M(A_{m1})$ is diffeomorphic to $M(A_{q1})$ (i.e.,  $A_{m1}$ is equivalent to $A_{q1}$, $m\not=q$),
by Theorem \ref{T2}, there is
an equivariant diffeomorphism
\begin{align*}
&(\Phi ,\varphi )\colon(\ZZ_2=<\al>,M(B_1))_m\to (\ZZ_2=<\be>,M(B_1))_{q}, \, \text{such that}\\
&\varphi(\al [z_2,\dots,z_n])=\Phi (\al)\varphi[z_2,\dots,z_n]=\beta\varphi[z_2,\dots,z_n].
\end{align*}
Let $\bar \varphi \colon\RR^{n-1}\to \RR^{n-1}$ be the lift of $\varphi$.
According to the form of $B_1$, the affine element $\bar \varphi$ has the form
\begin{align}\label{af_el_2}
\bar \varphi={ \left(\left (\begin{array}{c}
\\
{\bf a}\\
\\
\hline
\mbox{\bf b}
\end{array}\right),
\left (\begin{array}{ccc|c}
1 &  & \mbox{\Large$0$} &\\
 & \ddots &  & \mbox{\Large$0$}\\
\mbox{\Large$0$}  &  & 1 \\
\hline
& \mbox{\Large$0$}  &  & \mbox{\nz $D$}
\end{array}\right)\right)}
\end{align}
\nz  
where $D$ is a nonsingular submatrix of rank $k$, $^t{\bf a}=(a_{2},\dots,a_{n-k})$
and $^t{\bf b}=(b_{n-k+1},\dots,b_n)$ (see \eqref{finalform}).
Since $M(B_1)=T^{n-1}/(\ZZ_2)^{n-1}$, $\bar \varphi$ induces an affine map $\ti\varphi$ of $T^{n-1}$.

Put 
$X=
{
\begin{pmatrix}
x_{n-k+1}\\ \vdots \\ x_{n}
\end{pmatrix}} $. Since $\ti\varphi p=p\bar\varphi$, 
\begin{align}\label{aff_map}
\begin{split}
\ti\varphi(z_2,\dots,z_{n-k},z_{n-k+1},\dots,z_n)
   &=(\ell_2z_2,\dots,\ell_{n-k}z_{n-k},p\,^t({\bf b}+DX))\\
   & =(\ell_2z_2,\dots,\ell_{n-k}z_{n-k},c_{n-k+1}w_{n-k+1},\dots,c_{n}w_{n})
\end{split}
\end{align}
where
$\ell_p=exp(2\pi{\bf i}a_p)$ $(p=2,\dots,n-k)$,
$c_s=exp(2\pi{\bf i}b_s)$ $(s=n-k+1,\dots,n)$,
$(w_{n-k+1},\dots,w_{n})=p\, ^t(DX)$.

On the other hand, since $M(B_1)=T^{n-1}/(\ZZ_2)^{n-1}$,
the action $\langle\al\rangle$ lifts to an action
on $T^{n-1}$
such that we have the commutative diagram
\[\begin{CD}
(\ZZ_2)^{n-1} @. (\ZZ_2)^{n-1} \\
@VVV              @VVV\\
(\al,T^{n-1})   @>\ti \varphi>> (g\be,T^{n-1})\\
@VPrVV    @VPrVV\\
(\al,M(B_1))_{m}   @>\varphi>> (\be,M(B_1))_{q}
\end{CD}\]
for some $g\in(\ZZ_2)^{n-1}=\langle g_2,\dots,g_{n}\rangle$.
This means that 
\begin{align*}
Pr(\ti\varphi(\al (z_2,\dots,z_n)))&=\varphi(Pr(\al (z_2,\dots,z_n)))=\varphi(\al(Pr(z_2,\dots,z_n)))\\
                                 &=\Phi (\al)\varphi(Pr(z_2,\dots,z_n))=\be\varphi(Pr(z_2,\dots,z_n))\\
                                 &=\be Pr(\ti\varphi(z_2,\dots,z_n))=Pr(\be\ti\varphi(z_2,\dots,z_n)),
\end{align*}
(i.e.,
\begin{align}\label{g_beta}
\ti \varphi(\al(z_2,\dots,z_n))=g\be\ti \varphi(z_2,\dots,z_n).)
\end{align}
Note that 
$g_i$ $(i=2,\dots,n)$ corresponds to the $i$-th row of $A_{m1}$ and $A_{q1}$. 
This implies that $\ti\varphi$ maps the fixed point set of
$(\al,T^{n-1})$ to that of $(g\be,T^{n-1})$ diffeomorphically.
From the commutative diagram, we also have, for $g\in(\ZZ_2)^{n-1}$,
\begin{align*}
Pr(\ti\varphi(g(z_2,\dots,z_n)))&=\varphi(Pr(g(z_2,\dots,z_n)))
=\varphi(Pr(z_2,\dots,z_n))\\
&=Pr(\ti\varphi(z_2,\dots,z_n)).
\end{align*}
Hence there is an element $h\in(\ZZ_2)^{n-1}$
such that
\begin{align}\label{g_h}
\begin{split}
\ti \varphi(g(z_2,\dots,z_n))&=h\ti \varphi(z_2,\dots,z_n)\\
\ti \varphi g \ti \varphi^{-1}&=h.
\end{split}
\end{align}
\\

Recall from \eqref{star1}, that
\begin{align*}
A_{i1}&={\left (\begin{array}{c|ccccccc}
 1        & \{1\}_{2} & \{1\}_{3} & \dots & \{1\}_{1+i}& \{0\}_{2+i} & \dots & \{0\}_{n}\\
\hline
\mbox{\Large$0$}         &       &        &  \mbox{\large$B_{1}$}
\end{array}\right)},
\end{align*}
for $i=1,\dots,n-2$.
\\

To show that Bott matrices $A_{i1}$ $(i=1,\dots,n-2)$ are not equivalent
to each other, we shall prove the following cases.\\

\noindent{\bf a).} Bott matrices $A_{i1}$ $(i=1,\dots,n-k-1)$ are not equivalent
to each other.

Suppose that $A_{l1}$ is equivalent to $A_{p1}$ where $1\leq l< p\leq n-k-1$
$(l=1,\dots,n-k-2$; $p=2,\dots,n-k-1)$.
By the definition, the $\al$-action and $\beta$-action on $T^{n-1}$
corresponding  to $A_{l1}$ and $A_{p1}$
are as follows
\begin{align}\label{al_be_action1}
\begin{split}
\al(z_2,\dots,z_n)&=(\bar z_2,\dots,\bar z_{l+1},z_{l+2},\dots,z_{n}),\\
\be(z_2,\dots,z_n)&=(\bar z_2,\dots,\bar z_{l+1},\bar z_{l+2},\dots,\bar z_{p+1},z_{p+2},\dots,z_{n}).
\end{split}
\end{align}
Then from \eqref{g_beta}, we have
\begin{align}\label{equiv1}
\begin{split}
&\ti\varphi(\al(z_2,\dots,z_n))=g\be\ti\varphi(z_2,\dots,z_n),\\
&(\ell_2\bar z_2,\dots,\ell_{l+1}\bar z_{l+1},\ell_{l+2}z_{l+2},\dots,\ell_{n-k}z_{n-k},
      c_{n-k+1}w_{n-k+1},\dots,c_{n}w_{n})\\
&=g(\overline{\ell_2 z_2},\dots,\overline{\ell_{l+1}z_{l+1}},\dots,\overline{\ell_{p+1}z_{p+1}}
           ,\ell_{p+2}z_{p+2},\dots,\ell_{n-k}z_{n-k},\\
      &\hspace{.5cm}c_{n-k+1}w_{n-k+1},\dots,c_{n}w_{n})
\end{split}
\end{align}
for some $g\in(\ZZ_2)^{n-1}=\langle g_2,\dots,g_n\rangle$.
Since $$(c_{n-k+1}w_{n-k+1},\dots,c_{n}w_{n})=g(c_{n-k+1}w_{n-k+1},\dots,c_{n}w_{n}),$$
$g$ is a composition of an even number of  generators $\{g_2,\dots,g_{n-k}\}$.

On the other hand, since $g_t(z_t)=-z_t$ $(t=2,\dots,n-k)$
and
\begin{align}\label{eq}
(\ell_{l+2}z_{l+2},\dots,\ell_{n-k}z_{n-k})
=g(\overline{\ell_{l+2}z_{l+2}},\dots,\overline{\ell_{p+1}z_{p+1}}
           ,\ell_{p+2}z_{p+2},\dots,\ell_{n-k}z_{n-k}),
\end{align}
$g\not\in\langle g_{l+2},\dots,g_{n-k}\rangle $.
So, the last possibility is that $g\in\langle g_2,\dots,$ $g_{l+1}\rangle $.
If this is the case, and since $g$ is a composition of an even number of
 generators $\{g_2,\dots,g_{l+1}\}$, it contradicts \eqref{eq}.\\
 
\noindent{\bf b).} Bott matrices $A_{i1}$ ($i=n-k,\dots,n-2$) are not equivalent
to each other.

Suppose that $A_{(n-k+l)1}$ is equivalent to $A_{(n-k+p)1}$ where $n-k\leq n-k+l< n-k+p\leq n-2$
$(l=0,\dots,k-3$; $p=1,\dots,k-2)$.
By the definition, the $\al$-action and $\beta$-action on $T^{n-1}$
corresponding to $A_{(n-k+l)1}$ and $A_{(n-k+p)1}$ 
are as follows
\begin{align}\label{al_be_action1b}
\begin{split}
\al(z_2,\dots,z_n)&=
  (\bar z_2,\dots,\bar z_{n-k},\bar z_{n-k+1},\dots,\bar z_{n-k+l+1},z_{n-k+l+2},\dots,z_{n}),\\
\be(z_2,\dots,z_n)&=
  (\bar z_2,\dots,\bar z_{n-k},\bar z_{n-k+1},\dots,\bar z_{n-k+p+1},z_{n-k+p+2},\dots,z_{n}).
\end{split}
\end{align}
Then from \eqref{g_beta}, we have
\begin{align}\label{equiv1b}
\begin{split}
&\ti\varphi(\al(z_2,\dots,z_n))=g\be\ti\varphi(z_2,\dots,z_n),\\
&(\ell_2\bar z_2,\dots,\ell_{n-k}\bar z_{n-k},
      c_{n-k+1}u'_{n-k+1},\dots,c_{n}u'_{n})\\
&=g(\overline{\ell_2 z_2},\dots,\overline{\ell_{n-k}z_{n-k}},
      \overline{c_{n-k+1}w_{n-k+1}},\dots,\overline{c_{n-k+p+1}w_{n-k+p+1}},\\
     &\hspace{0.7cm} c_{n-k+p+2}w_{n-k+p+2},\dots,c_{n}w_{n})
\end{split}
\end{align}
for some $g\in(\ZZ_2)^{n-1}=\langle g_2,\dots,g_n\rangle$,
where
\begin{align*}
(u'_{n-k+1},\dots,u'_{n})
=p(
(-x_{n-k+1},\dots,-x_{n-k+l+1},x_{n-k+l+2},\dots,x_{n})
\,^tD).
\end{align*}

Now we consider the following cases for $g$.

\noindent{\bf b1).} If $g=g_tg'$ with $g_t\in\{g_2,\dots,g_{n-k-1}\}$, $g'\in\langle g_{t+1},\dots,g_n\rangle $
then
\begin{align*}
\ell_{t+1}\bar z_{t+1}=g(\overline {\ell_{t+1}z_{t+1}})=
\begin{cases}
-\ell_{t+1}z_{t+1} &\text{if $g=g_tg_{t+1}g''$, $g''\in\langle g_{t+2},\dots,g_n\rangle $}\\
\ell_{t+1}z_{t+1}  &\text{if $g=g_tg''$}.
\end{cases}
\end{align*}
Hence we get a contradiction. That is, such $g=g_tg'$ cannot occur.

\noindent{\bf b2).} If $g=g_{n-k}\hat g$ where $\hat g\in\langle g_{n-k+1},\dots,g_n\rangle $,
then 
$$\ell_{n-k}\bar z_{n-k}=g(\overline {\ell_{n-k}z_{n-k}})=-\overline {\ell_{n-k}z_{n-k}}.$$
This implies that $\ell_{n-k}=\pm {\bf i}$. 
Therefore
\begin{align*}
\ti\varphi(z_2,\dots,z_n)=&
(\ell_2z_2,\dots,\ell_{n-k-1}z_{n-k-1},\pm {\bf i}z_{n-k},
p\,^t({\bf b}+DX)),\\
\ti\varphi^{-1}(z_2,\dots,z_n)
=&(\bar \ell_2z_2,\dots,\bar\ell_{n-k-1}z_{n-k-1},\mp{\bf i}z_{n-k},p\,^t(-D^{-1}{\bf b}+D^{-1}X)).
\end{align*}
Now, from \eqref{g_h}, we consider
\begin{align*}
&\ti\varphi g_2 \ti\varphi^{-1}(z_2,\dots,z_n)\\
&=(-z_2,\ell_3^2\bar z_3,\dots,\ell_{n-k-1}^2\bar z_{n-k-1},-\bar z_{n-k},
exp(4\pi{\bf i}b_{n-k+1})\bar z_{n-k+1},\dots,exp(4\pi{\bf i}b_{n})\bar z_{n})\\
&=g_2(z_2,\overline{\ell_3^2}z_3,\dots,\overline{\ell_{n-k-1}^2}z_{n-k-1},-z_{n-k},\\
&\hspace{0.8cm}exp(4\pi{(-\bf i)}b_{n-k+1})z_{n-k+1},\dots,exp(4\pi{(-\bf i)}b_{n})z_{n})\\
&=g_2h(z_2,\dots,z_n)
\end{align*}
where
\begin{align}\label{h1}
\begin{split}
&h(z_2,\dots,z_n)=(z_2,\overline{\ell_3^2}z_3,\dots,\overline{\ell_{n-k-1}^2}z_{n-k-1},-z_{n-k},\\
&\hspace{2.5cm}exp(4\pi{(-\bf i)}b_{n-k+1})z_{n-k+1},\dots,exp(4\pi{(-\bf i)}b_{n})z_{n}).
\end{split}
\end{align}
We shall check that $h\not\in\langle g_3,\dots,g_n\rangle $ (i.e., $g_2h\not\in (\ZZ_2)^{n-1}$).

Suppose that $h\in\langle g_3,\dots,g_n\rangle $.
Since $h(z_{n-k})=-z_{n-k}$ in \eqref{h1}, we may write $h=\hat h g_{n-k}h''$,
where
$\hat h$ is a composition of an even
number of  generators $\{g_3,\dots,g_{n-k-1}\}$,
$h''\in\langle g_{n-k+1},$ $\dots,g_n\rangle $.
However such $h$ implies that
$h(z_{n-k+1},\dots,$ $z_{n})=(\pm \bar z_{n-k+1},\dots,\pm \bar z_{n})$.
This contradicts \eqref{h1}.
Similarly for $h=g_{n-k}h''$.
Thus $g_2h\not\in(\ZZ_2)^{n-1}$.
That is, such $g=g_{n-k}\hat g$ cannot occur.

\noindent{\bf b3).} If $g=\hat{g}$ satisfies \eqref{equiv1b}, then from \eqref{al_be_action1b},
\begin{align*}
&\hat g\be(z_2,\dots,z_n)=\\
&(\bar z_2,\dots,\bar z_{n-k},
      \pm \bar z_{n-k+1},\dots,\pm \bar z_{n-k+l+1},\dots,\pm \bar z_{n-k+p+1},\pm z_{n-k+p+2},\dots, \pm z_{n}).
\end{align*}
Then we obtain that
the fixed point set of $(\al,T^{n-1})$ is
\[Fix\,\al=(\{\pm 1\}_{2},\dots,\{\pm 1\}_{n-k+l+1},z_{n-k+l+2},\dots,z_{n})\]
and
the fixed point set of $(\hat g\be,T^{n-1})$ is
\[Fix\,\hat g\be=(\{\pm 1\}_{2},\dots,\{\pm 1\}_{n-k},\{\star\}_{n-k+1},\dots,\{\star\}_{n-k+p+1},z_{n-k+p+2},\dots,z_{n})\]
with $\star \in\{\pm 1, \pm {\bf i}\}$.
Then
by \eqref{g_beta}, we have
\begin{align*}
dim(Fix\,\al)&=dim(Fix\,g\be)\\
k-l-1&= k-p-1.
\end{align*}
Hence we get a contradiction. That is, such $g=\hat g$ cannot occur.\\

\noindent{\bf c).}
Each  $A_{i1}$ $(i=1,\dots,n-k-1)$ is not equivalent
to each  $A_{(n-k+j)1}$ $(j=0,\dots,k-2)$.

Suppose that $A_{i1}$ is equivalent to $A_{(n-k+j)1}$.
By the definition, the $\al$-action and  $\beta$-action on $T^{n-1}$
corresponding to $A_{i1}$ and $A_{(n-k+j)1}$ 
are as follows
\begin{align}\label{al_be_action1c}
\begin{split}
\al(z_2,\dots,z_n)&=
  (\bar z_2,\dots,\bar z_{i+1},z_{i+2},\dots,z_{n-k},\dots,z_{n}),\\
\be(z_2,\dots,z_n)&=
  (\bar z_2,\dots,\bar z_{n-k},\bar z_{n-k+1},\dots,\bar z_{n-k+j+1},z_{n-k+j+2},\dots,z_{n}).
\end{split}
\end{align}
Then by \eqref{g_beta}, we have
\begin{align}\label{equiv1c}
\begin{split}
&\ti\varphi(\al(z_2,\dots,z_n))=g\be\ti\varphi(z_2,\dots,z_n),\\
&(\ell_2\bar z_2,\dots,\ell_{i+1}\bar z_{i+1},\ell_{i+2}z_{i+2},\dots,\ell_{n-k}z_{n-k},
      c_{n-k+1}w_{n-k+1},\dots,c_{n}w_{n})\\
&=g(\overline{\ell_2 z_2},\dots,\overline{\ell_{n-k}z_{n-k}},
      \overline{c_{n-k+1}w_{n-k+1}},\dots,\overline{c_{n-k+j+1}w_{n-k+j+1}},\\
     &\hspace{0.7cm} c_{n-k+j+2}w_{n-k+j+2},\dots,c_{n}w_{n}).
\end{split}
\end{align}
for some $g\in(\ZZ_2)^{n-1}=\langle g_2,\dots,g_n\rangle $.

On the other hand, since
\begin{align*}
&(c_{n-k+1}w_{n-k+1},\dots,c_{n}w_{n})=\\
&g(\overline{c_{n-k+1}w_{n-k+1}},\dots,\overline{c_{n-k+j+1}w_{n-k+j+1}},
 c_{n-k+j+2}w_{n-k+j+2},\dots,c_{n}w_{n}),
\end{align*}
there is no $g\in\langle g_2,\dots,g_n\rangle $ satisfying \eqref{equiv1c}.
This completes the proof of Lemma.
\end{proof}


\begin{lemma}\label{lem2}
Let $M(A_{j'1})=S^1\times_{\ZZ_2}M(B_1)$ $(j=1,\dots,k,(k+1))$ be $n$-dimensional real Bott manifolds
creating from an $(n-1)$-dimensional real Bott manifold $M(B_1)$. Such real Bott manifolds $M(A_{j'1})$ corresponding to
Bott matrices derived from $A_{j'1}$ $(j=1,\dots,k,(k+1))$ in \eqref{star2}
are not diffeomorphic to each other.

\begin{align}\label{star2}
\begin{split}
&A_{({1'})1}={ \left (\begin{array}{c|cccccccc}
 1        & \{1\}_{2} & \{0\}_{3}& \{\hat 1\}_{4} & \dots & \{\hat 1\}_{(n-k)}& \{0\}_{} & \dots  & \{0\}_{(n-k)+k}\\
\hline
\mbox{\Large$0$}         &       &  & &\mbox{\large$B_{1}$}      &
\end{array}\right)},\\
&A_{({2'})1}={ \left (\begin{array}{c|ccccccccc}
 1        & \{1\}_{2} & \{0\}_{3}& \{\hat 1\}_{4} & \dots & \{\hat 1\}_{(n-k)}& \{1\}_{n-k+1}& \{0\}_{} & \dots  & \{0\}_{n}\\
\hline
\mbox{\Large$0$}         &       & & &\mbox{\large$B_{1}$}      &
\end{array}\right)},\\
&\vdots\\
&A_{({k'})1}=\\
&{ \left (\begin{array}{c|ccccccccc}
 1        & \{1\}_{2} & \{0\}_{3}& \{\hat 1\}_{4} & \dots & \{\hat 1\}_{(n-k)}& \{1\}_{n-k+1}&  \dots & \{1\}_{n-1} & \{0\}_{n}\\
\hline
\mbox{\Large$0$}         &       &  & &\mbox{\large$B_{1}$}      &
\end{array}\right)},\\
&A_{(k+1)'1}=\\
&{ \left (\begin{array}{c|ccccccccc}
 1        & \{1\}_{2} & \{0\}_{3}& \{\hat 1\}_{4} & \dots & \{\hat 1\}_{(n-k)}& \{1\}_{n-k+1}&  \dots & \{1\}_{n-1} & \{1\}_{n}\\
\hline
\mbox{\Large$0$}         &       &  & &\mbox{\large$B_{1}$}      &
\end{array}\right)}
\end{split}
\end{align}
where $\hat 1$ is either $0$ or $1$,
\begin{align}\label{except}
\begin{split}
(\{\hat 1\}_4,\dots,\{\hat 1\}_{n-k})&\not=(0,\dots,0)\\
(\text{resp}.\, (\{\hat 1\}_4,\dots,\{\hat 1\}_{n-k})&\not=(\overbrace{0,\dots,0}^{l},1,\dots,1),\,\,l=0,1,\dots,n-k-3)
\end{split}
\end{align}
for Bott matrix $A_{j'1}$ $(j=1,\dots,k)$ $($resp. $A_{(k+1)'1})$,
 the Bott matrix $B_1$ is as in \eqref{matrixB1}, $k\geq 2$, and $n-k\geq 3$.
$($That is, there are $k(2^{n-k-3}-1)+(2^{n-k-3}-(n-k-2))$
nonequivalent Bott matrices derived from \eqref{star2}.$)$
\end{lemma}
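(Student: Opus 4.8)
The plan is to run the affine--rigidity argument of Lemma \ref{lem1} in the present, more intricate, setting. Suppose two of the manifolds in \eqref{star2} are diffeomorphic; by Theorem \ref{T2} we obtain an equivariant diffeomorphism $\varphi\colon(\ZZ_2=\langle\al\rangle,M(B_1))\to(\ZZ_2=\langle\be\rangle,M(B_1))$ whose lift $\bar\varphi$ has the block shape \eqref{af_el_2}: its linear part is the identity on the \emph{upper} coordinates $z_2,\dots,z_{n-k}$ and a nonsingular $D$ on the $I_k$--block coordinates $z_{n-k+1},\dots,z_n$. The induced affine involution $\ti\varphi$ of $T^{n-1}$ then satisfies $\ti\varphi\,\al\,\ti\varphi^{-1}=g\be$ for some $g\in(\ZZ_2)^{n-1}=\langle g_2,\dots,g_n\rangle$, together with the normalization \eqref{g_h}. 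Reading the first row of $A_{j'1}$, the action $\al$ conjugates $z_2$, the upper coordinates selected by the pattern $(\hat 1_4,\dots,\hat 1_{n-k})$, and exactly the first $j-1$ of the block coordinates $z_{n-k+1},\dots,z_{n-k+j-1}$, while it fixes $z_3$ and $z_{n-k+j},\dots,z_n$. The structure \eqref{matrixB1} of $B_1$ gives the key asymmetry between generators: each upper generator $g_p$ ($2\le p\le n-k$) conjugates every coordinate to its right, whereas each block generator $g_s$ ($s>n-k$) only negates $z_s$.

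First I would separate matrices with different $j$ by the fixed--point data of Proposition \ref{T5}. Because the block generators contribute no conjugation, any $g$ imposes one common conjugation parity $c^*\in\{0,1\}$ on the whole $I_k$--block; hence a maximal fixed torus meets the block in dimension $\max(k-j+1,\,j-1)$, and the remaining block directions split into $2$--point pieces. Since $\ti\varphi$ preserves the type decomposition $T^{b_2}\times\cdots\times T^{b_\ell}$ of \eqref{preserve}, it carries this data across, so the dimension together with the number of components and their types distinguishes the values $j=1,\dots,k+1$; the reflected pair contributing the same maximal dimension (for instance $j=1$ and $j=k+1$) is told apart by the component and type counts, exactly as in the type $(1,\dots,1)$ argument of {\bf b)} in Section \ref{EXAMPEL}.

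For a fixed $j$ I would then show that two admissible patterns give non--equivalent matrices. Here $\ti\varphi$ acts \emph{diagonally}, $z_p\mapsto\ell_p z_p$, on the upper coordinates, so matching the conjugation type of $g\be$ with that of $\al$ on each $z_p$ forces the cumulative parity $\sum_{2\le p'<p}a_{p'}$ of $g$ to equal the coordinatewise difference of the two patterns. Feeding this into the sign conditions coming from $\ti\varphi\,\al\,\ti\varphi^{-1}=g\be$ and into the normalization \eqref{g_h} (which already forces each $\ell_p^2=\pm1$), and using that $z_3$ is never conjugated, one reduces to a finite check of cases paralleling {\bf a)}, {\bf b)}, {\bf c)} of Lemma \ref{lem1}: no admissible $g$ can convert the $\be$--pattern into the $\al$--pattern unless the patterns already coincide. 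Finally the admissibility restrictions \eqref{except} are precisely the patterns that, after move {\bf I} together with an equivariant diffeomorphism, collapse to one of the canonical shapes \eqref{star1} already enumerated in Lemma \ref{lem1}; counting the surviving patterns for each $j$ yields $k(2^{n-k-3}-1)+(2^{n-k-3}-(n-k-2))$.

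The main obstacle is the entanglement of the two blocks through $g$: since the upper generators conjugate the block coordinates, the parity $c^*$ used in the $j$--analysis is coupled to the pattern--matching in the upper block, and one cannot argue the two levels independently. The delicate step is therefore to rule out the \emph{mixed} elements $g=g_t g'$ and $g=g_{n-k}\hat g$ that straddle the two blocks, the analogues of cases {\bf b1)}--{\bf b3)} of Lemma \ref{lem1}; I expect that in the mixed case one is forced to $\ell_{n-k}=\pm{\bf i}$ and then reaches a contradiction with \eqref{g_h} by exhibiting a generator whose $\ti\varphi$--conjugate leaves $(\ZZ_2)^{n-1}$. Carrying this bookkeeping uniformly in $n$, $k$, $j$ and the pattern is where the real work of the proof lies.
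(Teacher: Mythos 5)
Your plan has the right skeleton --- Theorem \ref{T2}, the affine lift \eqref{af_el_2}, the equivariance equation $\ti\varphi\,\al\,\ti\varphi^{-1}=g\be$, and a case analysis on $g$ --- but it is a plan, not a proof, and the two places where it departs from, or defers, the actual argument are exactly where it breaks down.

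First, your route for separating different values of $j$ is unsound as stated. You propose to distinguish $j\ne j'$ purely by fixed--point data via Proposition \ref{T5}, using the block--dimension formula $\max(k-j+1,\,j-1)$ plus unspecified ``component and type counts.'' But the two matrices being compared may carry \emph{different} admissible patterns $(\hat 1_4,\dots,\hat 1_{n-k})$, and the upper coordinates contribute to the fixed--point dimension in a pattern--dependent way (an element such as $g=g_3g_{n-k}$ makes every pattern--$1$ coordinate strictly between them full, so the maximum is not realized only by $g=e$ or $g=g_2$). Concretely, for $n=7$, $k=2$ one checks that the maximal fixed--set dimension is $3$ simultaneously for $(j,P)=(1,(1,1))$, for $(2,(1,0))$, $(2,(0,1))$, $(2,(1,1))$, and for $(3,(1,0))$, so the dimension invariant does not separate the $j$'s, and you never compute the finer component/type data you appeal to. The paper does \emph{not} take this route: in its Claim 1 it runs the algebraic case analysis on $g$ for arbitrary pairs of patterns, and invokes a fixed--point dimension count only in the special case $g\in\langle g_{n-k+1},\dots,g_n\rangle$ (Case 2), where the equation on the upper coordinates first forces the two patterns to coincide, making the count trivial ($\dim V=\dim W$, hence $l=p$).

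Second, you explicitly defer the mixed elements $g=g_tg'$ and $g=g_{n-k}\hat g$ (``where the real work of the proof lies''). That deferred work \emph{is} the proof: in the paper, both Claim 1 (Case 1) and Claim 2 hinge on showing that a matching coordinate with $\overset{\al}{\widehat{z_t}}=\overset{\be}{\widehat{z_t}}=\bar z_t$ forces $\ell_t=\pm{\bf i}$, and then computing $\ti\varphi\,g_2\,\ti\varphi^{-1}=g_2h$ with $h$ given by \eqref{h2} and verifying, by a parity analysis of which generators could compose $h$, that $h\notin\langle g_3,\dots,g_n\rangle$, contradicting \eqref{g_h}. You correctly guess this mechanism but do not carry it out, nor do you supply the separate treatment of $j=k+1$ (Claim 3) and the exact count $k(2^{n-k-3}-1)+(2^{n-k-3}-(n-k-2))$ that follows from the exclusions \eqref{except} matched against Remark \ref{rem_lem}. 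As it stands, the proposal neither establishes Claim 1 by its own (fixed--point) method nor reproduces the paper's algebraic argument for Claims 1--3.
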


\begin{proof}
For brevity, we can write Bott matrices \eqref{star2} in this way
\begin{align*}
&A_{j'1}=\\
&{ \left (\begin{array}{c|ccccccccccc}
 1        & 1 & 0 & \hat 1 &
         \dots & \{\hat 1\}_{n-k}& \{1\}_{n-k+1}& \dots & \{1\}_{n-k+(j-1)} &\{0\}_{} & \dots  & \{0\}_{n}\\
\hline
\mbox{\Large$0$}         &       & & & & & \mbox{\large$B_{1}$}      &
\end{array}\right)}
\end{align*}
for $j=1,\dots,k,(k+1)$.

To show that Bott matrices derived  from $A_{j'1}$ ($j=1,\dots,k,(k+1)$)
in \eqref{star2}
are not equivalent to each other, we shall prove
the following claims by using the argument at the beginning
 of the proof of Lemma \eqref{lem1}.\\

\noindent{\bf Claim 1.}
Bott matrices $A_{j'1}$ $(j=1,\dots,k,(k+1))$ are not equivalent to each other.

Suppose that $A_{l'1}$ is equivalent to $A_{p'1}$ where $1\leq l< p\leq k+1$
$(l=1,\dots,k$; $p=2,\dots,k+1)$.
By the definition, the $\al$-action and $\beta$-action on $T^{n-1}$
corresponding to $A_{l'1}$ and $A_{p'1}$ 
are as follows
\begin{align}\label{al_be_action}
\begin{split}
&\al(z_2,\dots,z_n)=(\bar z_2,z_3,\overset{\tz{\al}}{\widehat{z_4}},\dots,\overset{\tz{\al}}{\widehat{z_{n-k}}},
      \bar z_{n-k+1},\dots,\bar z_{n-k+l-1},z_{n-k+l},\dots,z_{n}),\\
&\be(z_2,\dots,z_n)=\\
&(\bar z_2,z_3,\overset{\tz{\be}}{\widehat{z_4}},\dots,\overset{\tz{\be}}{\widehat{z_{n-k}}},
      \bar z_{n-k+1},\dots,\bar z_{n-k+l-1},\dots,\bar z_{n-k+p-1},z_{n-k+p},\dots,z_{n}).
\end{split}
\end{align}
Here $\overset{\tz{\alpha }}{\widehat{z_j}}(\in\{z_j,\bar z_j\})$ means an $\alpha$-action on $z_j$.
Similarly for $\overset{\tz{\beta}}{\widehat{z_j}}$. Note that $\hat z_j$ is either $z_j$ or $\bar z_j$
 depending on whether
$\hat 1$ is 0 or 1 respectively.
Then by \eqref{g_beta}, we have
\begin{align}\label{equiv}
\begin{split}
&\ti\varphi(\al(z_2,\dots,z_n))=g\be\ti\varphi(z_2,\dots,z_n),\\
&(\ell_2\bar z_2,\ell_3z_3,\ell_4\overset{\tz{\al}}{\widehat{z_4}},\dots,
        \ell_{n-k}\overset{\tz{\al}}{\widehat{z_{n-k}}},
      c_{n-k+1}w'_{n-k+1},\dots,c_{n}w'_{n})\\
&=g(\overline{\ell_2z_2},\ell_3z_3,\overset{\tz{\be}}{\widehat{\ell_4z_4}},\dots,
    \overset{\tz{\be}}{\widehat{\ell_{n-k}z_{n-k}}},\\
       &\overline{c_{n-k+1}w_{n-k+1}},\dots,\overline{c_{n-k+p-1}w_{n-k+p-1}},
      c_{n-k+p}w_{n-k+p},\dots,c_{n}w_{n}),
\end{split}
\end{align}
for some $g\in(\ZZ_2)^{n-1}=\langle g_2,\dots,g_n\rangle $,
where
\begin{align*}
(w'_{n-k+1},\dots,w'_{n})
=
& p(
(-x_{n-k+1},\dots,-x_{n-k+l-1},x_{n-k+l},\dots,x_{n})
\,^tD
).
\end{align*}
Obviously, $g\in \langle g_2,g_3\rangle$ does not satisfy \eqref{equiv}, because it implies that
\begin{align}\label{r1}
\ell_3 z_3=g(\ell_3 z_3)=
\begin{cases}
\overline{\ell_3 z_3}    &\text{if $g=g_2$}\\
-\ell_3 z_3              &\text{if $g=g_3$}\\
-\overline{\ell_3 z_3}   &\text{if $g=g_2g_3$.}
\end{cases}
\end{align}

Next we consider the following cases for $g\in\langle g_4,\dots,g_n\rangle$.\\

\noindent\textit {Case 1}. Let
$g=g_tg'$ where $g_t\in\{g_4,\dots,g_{n-k}\}$ $(t=4,\dots,n-k)$,
$g'\in\langle g_{t+1},\dots,g_{n}\rangle$.

Note that since $g_t(z_t)=-z_t$, $g(z_t)=-z_t$.
If $\overset{\tz{\al}}{\widehat{z_t}}\not=\overset{\tz{\be}}{\widehat{z_t}}$
(resp. $\overset{\tz{\al}}{\widehat{z_t}}=\overset{\tz{\be}}{\widehat{z_t}}=z_t$)
then $g=g_t{g'}$ does not
satisfy \eqref{equiv}, because it implies that
$\ell_t z_t=g(\overline{\ell_t z_t})=-\overline{\ell_t z_t}$
(resp. $\ell_t z_t=g({\ell_t z_t})=-\ell_t z_t$).
If $\overset{\tz{\al}}{\widehat{z_t}}=\overset{\tz{\be}}{\widehat{z_t}}=\bar z_t$ then
$\ell_t \bar z_t=g(\overline{\ell_t z_t})=-\overline{\ell_t z_t}$. This implies that
$\ell_t=\pm {\bf i}$.
Therefore
\begin{align*}
\ti\varphi(z_2,\dots,z_n)&=
(\ell_2z_2,\dots,\ell_{t-1}z_{t-1},\pm {\bf i}z_{t},\ell_{t+1}z_{t+1},\dots,
          \ell_{n-k}z_{n-k},
p\,^t({\bf b}+DX)),\\
\ti\varphi^{-1}(z_2,\dots,z_n)
&=(\bar \ell_2z_2,\dots,\bar\ell_{t-1}z_{t-1},
 \mp {\bf i}z_{t},\bar\ell_{t+1}z_{t+1},\dots,
          \bar\ell_{n-k}z_{n-k},\\
&\hspace{0.7cm} p\,^t(-D^{-1}{\bf b}+D^{-1}X)).\\
\end{align*}

Now, from \eqref{g_h}, we consider
\begin{align*}
&\ti\varphi g_2 \ti\varphi^{-1}(z_2,\dots,z_n)\\
&=(-z_2,\ell_3^2\bar z_3,\dots,\ell_{t-1}^2\bar z_{t-1},
 -\bar z_{t},\ell_{t+1}^2\bar z_{t+1},\dots,
          \ell_{n-k}^2\bar z_{n-k},\\
&\hspace{0.6cm}exp(4\pi{\bf i}b_{n-k+1})\bar z_{n-k+1},\dots,exp(4\pi{\bf i}b_{n})\bar z_{n})\\
&=g_2(z_2,\overline{\ell_3^2}z_3,\dots,\overline{\ell_{t-1}^2}z_{t-1},
 -z_{t},\overline{\ell_{t+1}^2}z_{t+1},\dots,
          \overline{\ell_{n-k}^2}z_{n-k},\\
&\hspace{1cm}exp(4\pi{(-\bf i)}b_{n-k+1})z_{n-k+1},\dots,exp(4\pi{(-\bf i)}b_{n})z_{n})\\
&=g_2h(z_2,\dots,z_n)
\end{align*}
where
\begin{align}\label{h2}
\begin{split}
h(z_2,\dots,z_n)&=(z_2,\overline{\ell_3^2}z_3,\overline{\ell_4^2}z_4,\dots,\overline{\ell_{t-1}^2}z_{t-1},
 -z_{t},\overline{\ell_{t+1}^2}z_{t+1},\dots,
          \overline{\ell_{n-k}^2}z_{n-k},\\
&\hspace{1cm}exp(4\pi{(-\bf i)}b_{n-k+1})z_{n-k+1},\dots,exp(4\pi{(-\bf i)}b_{n})z_{n}).
\end{split}
\end{align}
We shall check that $h\not\in\langle g_3,\dots,g_n\rangle$ (i.e., $g_2h\not\in (\ZZ_2)^{n-1}$).

Suppose that $h\in\langle g_3,\dots,g_n\rangle $.
Since
\[h(z_{n-k+1},\dots,z_n)=(exp(4\pi{(-\bf i)}b_{n-k+1})z_{n-k+1},\dots,
exp(4\pi{(-\bf i)}b_{n})z_{n})\] in \eqref{h2},
we may write $h=h'_{(even)}h''$ where $h'\in\langle g_3,\dots,g_{n-k}\rangle$ and
$h''\in\langle g_{n-k+1},$ $\dots,g_n\rangle$.
Here $h'_{(even)}$ means a composition of an even number of  generators $\{g_3,\dots,$ $g_{n-k}\}$.

On the other hand, since $h(z_t)=-z_t$ in \eqref{h2}, we may write $h'_{(even)}=\hat h g_t \Check h$,
where
$\hat h$ (resp. $\Check h$) is a composition of an even (resp. odd)
number of  generators $\{g_3,\dots,g_{t-1}\}$ (resp. $\{g_{t+1},\dots,g_{n-k}\}$).
For $t=4,5,\dots,n-k-1$, such $h'_{(even)}$ implies that
\begin{align*}
h(z_{t+1})=h'_{(even)}(z_{t+1})=
\begin{cases}
\bar z_{t+1}             &\text{if $h'_{(even)}=\hat h g_{t}\ddot{g}$,\, $\ddot{g}\in\langle g_{t+2},\dots,g_{n-k}\rangle$}\\
-\bar z_{t+1}            &\text{if $h'_{(even)}=\hat h g_{t}g_{t+1}\ddot{g}$}.
\end{cases}
\end{align*}
Hence this contradicts \eqref{h2}. Similarly for $h'_{(even)}=g_t \Check h$ $(t=4,5,\dots,n-k-1)$.

Now let us consider for $t=n-k$. Since $h(z_t)=-z_t$ in \eqref{h2}, $h=\dot{h}g_{n-k}h''$ where $\dot{h}$ is
a composition of an even number of generators $\{g_3,\dots,g_{n-k-1}\}$.
This implies that
$h(z_{n-k+1},\dots,z_{n})=(\pm \bar z_{n-k+1},\dots,\pm \bar z_{n})$.
This also contradicts \eqref{h2}. Similarly for $h=g_{n-k}h''$.
Thus $g_2h\not\in(\ZZ_2)^{n-1}$.
Hence \textit {Case 1} cannot occur.
\\

\noindent\textit {Case 2.}
Let $g=g''$ where $g''\in\langle g_{n-k+1},\dots,g_n\rangle$.

If $g=g''$ satisfies \eqref{equiv}, this implies that
$(\overset{\tz{\al}}{\widehat{z_4}},\dots,\overset{\tz{\al}}{\widehat{z_{n-k}}})
=(\overset{\tz{\be}}{\widehat{z_4}},\dots,\overset{\tz{\be}}{\widehat{z_{n-k}}})$.
Then, from \eqref{al_be_action},
\begin{align*}
&g''\be(z_2,\dots,z_n)=\\
&(\bar z_2,z_3,\overset{\tz{\be}}{\widehat{z_4}},\dots,\overset{\tz{\be}}{\widehat{z_{n-k}}},
      \pm \bar z_{n-k+1},\dots,\pm \bar z_{n-k+l-1},\dots,\pm \bar z_{n-k+p-1},\pm z_{n-k+p},\dots,
       \pm z_{n}).
\end{align*}
Then we obtain that
the fixed point set of $(\al,T^{n-1})$ is
\[Fix\,\al=(V,\{\pm 1\}_{n-k+1},\dots,\{\pm 1\}_{n-k+l-1},z_{n-k+l},\dots,z_{n})\]
with $V=\{(z_2,\dots,z_{n-k})|\al(z_2,\dots,z_{n-k})=(z_2,\dots,z_{n-k})\}$
and
the fixed point set of $(g''\be,T^{n-1})$ is
\[Fix\,g''\be=(W,\{\star\}_{n-k+1},\dots,\{\star\}_{n-k+p-1},z_{n-k+p},\dots,z_{n})\]
with 
$$W=\{(z_2,\dots,z_{n-k})|g''\be(z_2,\dots,z_{n-k})=\be(z_2,\dots,z_{n-k})=(z_2,\dots,z_{n-k})\}$$ 
and
$\star \in\{\pm 1, \pm {\bf i}\}$.
Since $(\overset{\tz{\al}}{\widehat{z_4}},\dots,\overset{\tz{\al}}{\widehat{z_{n-k}}})
=(\overset{\tz{\be}}{\widehat{z_4}},\dots,\overset{\tz{\be}}{\widehat{z_{n-k}}})$,
$dimV=dimW$.
Then
by \eqref{g_beta}, we have
\begin{align*}
dim(Fix\,\al)&=dim(Fix\,g\be)\\
dim V +(k-l+1)&= dim W +(k-p+1).
\end{align*}
Hence we get a contradiction.
That is, the \textit {Case 2} cannot occur.
This completes the proof of Claim 1.
\\

\noindent{\bf Claim 2.}
Bott matrices derived from each $A_{j'1}$ $(j=1,\dots,k)$ are not equivalent
to each other, (i.e.,
there are $(2^{n-k-3}-1)$ nonequivalent Bott matrices derived from
each $A_{j'1}$ $(j=1,\dots,k)$).

Associated with the entries ($\{\hat 1\}_4,\dots,\{\hat 1\}_{n-k})$ of each $A_{j'1}$,
there are
$2^{(n-k-3)}-1$ different actions $(\ZZ_2,M(B_1))_{j'}$.
(Note that ($\{\hat 1\}_4,$ $\dots,\{\hat 1\}_{n-k})\not=$
$(0,\dots,0)$.)

We prove that every two different actions $(\ZZ_2,M(B_1))_{j'}$
derived from $A_{j'1}$ (denoted by
$(\al,M(B_1))_{j'_{\al}}$ and $(\be,M(B_1))_{j'_{\be}}$ respectively),
the corresponding Bott matrices (denoted by $A_{j'_\al 1}$ and $A_{j'_\be 1}$ respectively)
are not equivalent.

Since the $\al$-action and  $\be$-action are different,
we may assume that $\overset{\tz{\al}}{\widehat{z_i}}=z_i$, $\overset{\tz{\be}}{\widehat{z_i}}=\bar z_i$,
for some $i\in\{4,\dots,n-k\}$. Then
\begin{align}
\begin{split}
&\al(z_2,\dots,z_n)=\\
&(\bar z_2,z_3,\overset{\tz{\al}}{\widehat{z_4}},\dots,\overset{\tz{\al}}{\widehat{z_{i-1}}},{z_i},
            \overset{\tz{\al}}{\widehat{z_{i+1}}},\dots,
                   \overset{\tz{\al}}{\widehat{z_{n-k}}},
      \bar z_{n-k+1},\dots,\bar z_{n-k+j-1},z_{n-k+j},\dots,z_{n}),\\
&\be(z_2,\dots,z_n)=\\
&(\bar z_2,z_3,\overset{\tz{\be}}{\widehat{z_4}},\dots,\overset{\tz{\be}}{\widehat{z_{i-1}}},
    {\bar z_i},\overset{\tz{\be}}{\widehat{z_{i+1}}},\dots,\overset{\tz{\be}}{\widehat{z_{n-k}}},
      \bar z_{n-k+1},\dots,\bar z_{n-k+j-1},z_{n-k+j},\dots,z_{n}),\\
&\text {for}\,\, i=4,\dots,n-k, \, j=1,\dots,k.
\end{split}
\end{align}
As before, if  $A_{j'_\al 1}$ and $A_{j'_\be 1}$ are equivalent, we have
\begin{align}\label{equiv2}
\begin{split}
&\ti\varphi(\al(z_2,\dots,z_n))=g\be\ti\varphi(z_2,\dots,z_n),\\
&(\ell_2\bar z_2,\ell_3z_3,\ell_4\overset{\tz{\al}}{\widehat{z_4}},\dots,
            ,\ell_{i-1}\overset{\tz{\al}}{\widehat{z_{i-1}}},{\ell_iz_i},
            \ell_{i+1}\overset{\tz{\al}}{\widehat{z_{i+1}}},\dots,
        \ell_{n-k}\overset{\tz{\al}}{\widehat{z_{n-k}}},
      c_{n-k+1}v'_{n-k+1},\\ &\dots,c_{n}v'_{n})\\
&=g(\overline{\ell_2z_2},\ell_3z_3,\overset{\tz{\be}}{\widehat{\ell_4z_4}},\dots,
    \overset{\tz{\be}}{\widehat{\ell_{i-1}z_{i-1}}},
    {\overline {\ell_iz_i}},\overset{\tz{\be}}{\widehat{\ell_{i+1}z_{i+1}}},\dots,
    \overset{\tz{\be}}{\widehat{\ell_{n-k}z_{n-k}}},\\
       &\hspace{0.7cm}\overline{c_{n-k+1}w_{n-k+1}},\dots,\overline{c_{n-k+j-1}w_{n-k+j-1}},
      c_{n-k+j}w_{n-k+j},\dots,c_{n}w_{n}),
\end{split}
\end{align}
for some $g\in(\ZZ_2)^{n-1}=\langle g_2,\dots,g_n\rangle$,
where
\begin{align}\label{bar-varphi2}
\begin{split}
 (v'_{n-k+1},\dots,v'_{n})
=
 p(
(-x_{n-k+1},
\dots,-x_{n-k+j-1},x_{n-k+j},\dots,x_{n})\,
^tD
).
\end{split}
\end{align}

Obviously, $g\in\langle g_2,g_3\rangle $ does not satisfy \eqref{equiv2} because of \eqref{r1}.
If $g\in\langle g_{i},\dots,g_n\rangle $ then the equation \eqref{equiv2} is also not satisfied, because it implies that
\begin{align}\label{r2}
\ell_i z_i=g(\overline{\ell_i z_i})=
\begin{cases}
-\overline{\ell_i z_i}   &\text{if $g=g_ig'$ where $g'=\langle g_{i+1},\dots,g_n\rangle $}\\
\overline{\ell_i z_i}    &\text{if $g=g'$.}
\end{cases}
\end{align}
Because of \eqref{r1} and \eqref{r2}, if $i=4$ then there is no $g\in(\ZZ_2)^{n-1}$
satisfying \eqref{equiv2}.

Now let us consider for $i\in\{5,\dots,n-k\}$.
Since $\overset{\tz{\al}}{\widehat{z_i}}=z_i$ and
$\overset{\tz{\be}}{\widehat{z_i}}=\bar z_i$, for some $i\in\{5,\dots,n-k\}$,
 we may write
$g=g_t\dot{g}^{(even)}\ddot{g}$ with $g_t\in\{g_{4},\dots,g_{i-1}\}$ $(t=4,\dots,i-1)$,
$\dot{g}\in\langle g_{t+1},\dots,g_{i-1}\rangle $, $\ddot{g}\in\langle g_{i+1},\dots,g_{n}\rangle $
(if $t=i-1$ then $g=g_t\ddot{g}$).
Here $\dot{g}^{(even)}$ means a composition of an even number of
 generators of $\dot{g}$.

Note that since $g_t(z_t)=-z_t$, $g(z_t)=-z_t$.
If $\overset{\tz{\al}}{\widehat{z_t}}\not=\overset{\tz{\be}}{\widehat{z_t}}$
(resp. $\overset{\tz{\al}}{\widehat{z_t}}=\overset{\tz{\be}}{\widehat{z_t}}=z_t$)
then $g=g_t\dot{g}^{(even)}\ddot{g}$ does not
satisfy \eqref{equiv2}, because it implies that
$\ell_t z_t=g(\overline{\ell_t z_t})=-\overline{\ell_t z_t}$
(resp. $\ell_t z_t=g({\ell_t z_t})=-\ell_t z_t$).
If $\overset{\tz{\al}}{\widehat{z_t}}=\overset{\tz{\be}}{\widehat{z_t}}=\bar z_t$ then
$\ell_t \bar z_t=g(\overline{\ell_t z_t})=-\overline{\ell_t z_t}$. This implies that
$\ell_t={\pm \bf i}$.
Therefore
\begin{align*}
\ti\varphi(z_2,\dots,z_n)&=
(\ell_2z_2,\dots,\ell_{t-1}z_{t-1},{\pm \bf i}z_{t},\ell_{t+1}z_{t+1},\dots,
          \ell_{n-k}z_{n-k},p\,^t({\bf b}+DX)).
\end{align*}
Similar to the proof of Claim 1, one can check that
 $\ti\varphi g_2 \ti\varphi^{-1}(z_2,\dots,z_n)\not\in(\ZZ_2)^{n-1}$. Hence we have a contradiction.
That is, there is no $g\in(\ZZ_2)^{n-1}$ satisfying \eqref{equiv2}.
\\

Since all combinations of ($\{\hat 1\}_4,\dots,\{\hat 1\}_{n-k})$
with ($\{\hat 1\}_4,\dots,\{\hat 1\}_{n-k})\not=(0,\dots,$ $0)$ are different for each $A_{j'1}$,
there are $(2^{n-k-3}-1)$ nonequivalent Bott matrices derived from
each $A_{j'1}$ $(j=1,\dots,k)$.
\\

\noindent{\bf Claim 3.}
Bott matrices derived from $A_{(k+1)'1}$ are not equivalent
to each other, (i.e.,
there are $2^{n-k-3}-(n-k-2)$ nonequivalent Bott matrices derived from
$A_{(k+1)'1})$.\\
Since there are $2^{n-k-3}-(n-k-2)$ different combination of ($\{\hat 1\}_4,\dots,\{\hat 1\}_{n-k})$ with 
 $$(\{\hat 1\}_4,\dots,\{\hat 1\}_{n-k})\not =
(\overbrace {0,\dots,0}^{l},1,\dots,1),$$
(that is, there are $2^{n-k-3}-(n-k-2)$ different actions $(\ZZ_2,M(B_1))_{(k+1)'}$),
by using argument in the proof of Claim 2 above,
there are $2^{n-k-3}-(n-k-2)$ nonequivalent Bott matrices derived from
$A_{(k+1)'1}$.

According to Claim 1, 2, 3, we obtain that there are $k(2^{n-k-3}-1)+(2^{n-k-3}-(n-k-2))$ 
nonequivalent Bott matrices derived from \eqref{star2}.
\end{proof}

\begin{remark}\label{rem_lem}
Consider Bott matrices $A_{i1}$ $(i=1,\dots,n-2)$ in \eqref{star1} and
 $A_{j'1}$ $(j=1,\dots,k,k+1)$ in \eqref{star2}. 
\begin{itemize}
\item[(i)]
Associated with the entries $(\{\hat 1\}_4,\dots,\{\hat 1\}_{n-k})$ in 
$A_{j'1}$ $(j=1,\dots,k)$, 
if  $(\{\hat 1\}_4,\dots,\{\hat 1\}_{n-k})=(0,\dots,0)$ then
$A_{1'1}=A_{11}$ and $A_{j'1}\sim A_{(n-j)1}$ $(j=2,\dots,k)$ by the equivariant
diffeomorphism
\[(\Phi ,\varphi )\colon(\ZZ_2,M(B_1))_{j'}\to (\ZZ_2,M(B_1))_{n-j}\]
defined by
\begin{align*}
&[z_2,\dots,z_{n-k},z_{n-k+1},\dots,z_{n-k+(j-1)},z_{n-k+j},\dots,z_n]
\stackrel{\varphi}{\longmapsto  }\\
&[{\bf i}z_2,\dots,z_{n-k},z_{n-k+j},\dots,z_n,z_{n-k+1},\dots,z_{n-k+(j-1)}].
\end{align*}
\item[(ii)] 
Associated with the entries $(\{\hat 1\}_4,\dots,\{\hat 1\}_{n-k})$ in 
$A_{(k+1)'1}$, 
if \\
$(\{\hat 1\}_4,\dots,\{\hat 1\}_{n-k})=(\overbrace {0,\dots,0}^{l},1,\dots,1)$ $
(l=0,1,\dots,n-k-3),$
 then
$A_{(k+1)'1}$ $\sim A_{(l+2)1}$ $(l=0,1,\dots,n-k-3)$
by the equivariant diffeomorphism
\[(\Phi ,\varphi)\colon(\ZZ_2,M(B_1))_{(k+1)'}\to (\ZZ_2,M(B_1))_{(l+2)}\]
defined by $$\varphi([z_2,\dots,z_n])=[{\bf i}z_2,\dots,z_n].$$
\end{itemize}
\end{remark}


\begin{lemma}\label{lem3}
Each Bott matrix in \eqref{star1} is not equivalent to each Bott matrix derived from \eqref{star2}.
\end{lemma}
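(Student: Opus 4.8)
The plan is to argue by contradiction, following the template of Lemmas \ref{lem1} and \ref{lem2}. Assume some $A_{i1}$ of \eqref{star1} is equivalent to a Bott matrix derived from some $A_{j'1}$ of \eqref{star2}. Then Theorem \ref{T2} produces an equivariant diffeomorphism whose affine lift $\bar\varphi$ has the form \eqref{af_el_2}, so that the induced affine map $\ti\varphi$ of $T^{n-1}$ satisfies $\ti\varphi(\al(z))=g\be\ti\varphi(z)$ as in \eqref{g_beta} for some $g\in(\ZZ_2)^{n-1}=\langle g_2,\dots,g_n\rangle$. Here $\al$ conjugates the consecutive block $z_2,\dots,z_{i+1}$ coming from $A_{i1}$, while $\be$ comes from $A_{j'1}$; the structural point separating the two families is that in \eqref{star2} the $(1,3)$-entry is $0$, so $\be$ fixes $z_3$, whereas $\al$ conjugates $z_3$ as soon as $i\ge 2$.

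First I would exploit the type of $M(B_1)$. Since $D$ in \eqref{af_el_2} is the only non-diagonal part of the linear component of $\bar\varphi$, the map $\ti\varphi$ preserves each individual circle $z_2,\dots,z_{n-k}$ and preserves the block $T^k$ spanned by $z_{n-k+1},\dots,z_n$; that is, $M(B_1)$ has type $(1,\dots,1,k)$. Restricting $\ti\varphi\al=g\be\ti\varphi$ to the circle $z_3$, and noting that $\ti\varphi|_{z_3}$ is the orientation-preserving map $z_3\mapsto\ell_3 z_3$, the orientation (conjugation versus multiplication) of the action on $z_3$ is preserved. As $\be$ fixes $z_3$ and only $g_2$ among the generators conjugates $z_3$, the elementary identity recorded in \eqref{r1} forces $g_2\in g$ exactly when $i\ge 2$, and $g_2\notin g$ when $i=1$.

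Propagating this through the remaining top circles $z_4,\dots,z_{n-k}$, the same orientation identity reads $[\,p\le i+1\,]=[\,\hat 1_p=1\,]\oplus\pi_p$, where $\pi_p$ is the parity of the generators $g_2,\dots,g_{p-1}$ occurring in $g$. Combined with the requirement \eqref{g_h} that $\ti\varphi g_r\ti\varphi^{-1}\in(\ZZ_2)^{n-1}$ — controlled by the $\ell_t=\pm{\bf i}$ and $g_2h\notin(\ZZ_2)^{n-1}$ computations of the subcases b2, b3 of Lemma \ref{lem1} and Cases 1, 2 of Lemma \ref{lem2} — this pins $g$ down and forces the entries $(\{\hat 1\}_4,\dots,\{\hat 1\}_{n-k})$ to have the form $(\underbrace{0,\dots,0}_{l},1,\dots,1)$; meanwhile the dimension count for the fixed point sets on the block $T^k$ (as in b3 and Case 2, using \eqref{g_beta}) forces $j=k+1$ or the all-zero pattern. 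These are precisely the configurations excluded in \eqref{except}, and by Remark \ref{rem_lem} they are the representatives already identified with members of \eqref{star1}. Hence no Bott matrix actually \emph{listed} in \eqref{star2} can be equivalent to one in \eqref{star1}.

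The hard part will be the bookkeeping on the bottom block $T^k$: there $D$ may permute coordinates, so the coordinate-wise orientation argument is unavailable and must be replaced by the fixed-point-set dimension comparison together with the consistency relations \eqref{g_h}, exactly as in the subcases just cited. I expect the bulk of the work to lie in verifying that every admissible $g$ either reproduces an excluded pattern or violates $\ti\varphi g_r\ti\varphi^{-1}\in(\ZZ_2)^{n-1}$, and in treating the boundary subcase $i+1>n-k$, where $\al$ already conjugates part of the $T^k$ block and the separation between the top circles and the permutable bottom block must be handled with care.
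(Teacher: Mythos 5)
You have reproduced the paper's framework exactly: contradiction via Theorem \ref{T2}, the affine lift \eqref{af_el_2}, the equivariance identity \eqref{g_beta}, the $\ell_t=\pm{\bf i}$ computation fed into \eqref{g_h}, the fixed-point dimension comparison, and the exclusion \eqref{except}. So the approach is the same as the paper's. But as a proof there is a genuine gap: your decisive assertion — that these constraints "pin $g$ down and force" $(\{\hat 1\}_4,\dots,\{\hat 1\}_{n-k})$ into one of the patterns excluded by \eqref{except} — is essentially the statement of the lemma itself, and you explicitly defer its verification ("I expect the bulk of the work to lie in\dots"). The paper's proof consists precisely of that verification, organized into four claims (i)--(iv) with sub-cases, and no single one of the tools you cite covers all of them.

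Concretely, the parity relation you set up on the circles $z_3,\dots,z_{n-k}$ is never contradictory on its own: any prescribed parity sequence is realized by a suitable choice of which $g_t$ $(2\le t\le n-k-1)$ occur in $g$, so it only determines part of $g$, and the contradiction must come from elsewhere, by different mechanisms in different pairings. When $A_{l1}$ $(l\le n-k-1)$ is compared with $A_{j'1}$ $(2\le j\le k)$, the contradiction lives entirely on the bottom block (the paper's \eqref{k-part}): each of $g_2,\dots,g_{n-k}$ conjugates all $k$ coordinates $c_sw_s$ simultaneously while $g_{n-k+1},\dots,g_n$ conjugate none, so no $g$ can undo a conjugation of only $j-1$ of them. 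When $j=k+1$ that argument collapses (an odd number of the generators $g_2,\dots,g_{n-k}$ repairs the whole bottom block at once), and one must instead run the sign analysis $\ell_tz_t=-\ell_tz_t$ or $-\overline{\ell_tz_t}$ on the top circles together with the $g_2h\notin(\ZZ_2)^{n-1}$ computation, invoking \eqref{except} to kill the last surviving candidates for $g$ (this is where the exclusion actually enters, not through a classification of patterns). Finally, when $l\ge n-k$ the $\al$-action itself conjugates part of the bottom block, so the equation changes again (the paper's $y'_s$ versus $w_s$) and needs a separate treatment. Your sketch correctly names all the ingredients and the correct boundary cases, but carries out none of these verifications; the missing case analysis is not bookkeeping around the proof — it is the proof.
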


\begin{proof}
Using the argument at the beginning
 of the proof of Lemma \eqref{lem1} we shall prove the following Claims.
\\

\noindent{\bf Claim i).}
 $A_{l1}$ $(l=1,\dots,n-k-1)$ is not equivalent to  $A_{1'1}$.

For this, we prove the following cases.\\

\noindent\textit{Case 1}. $A_{11}$ is not equivalent to
$A_{1'1}$.\\
By the definition, the  actions $(\ZZ_2,M(B_1))_1$ and
$(\ZZ_2,M(B_1))_{1'}$ are
\begin{align}\label{11}
\al[z_2,\dots,z_n]=[\bar z_2,z_3,\dots,z_n]
\end{align}
and
\begin{align}\label{1'1}
\be[z_2,\dots,z_n]=[\bar z_2,z_3,\hat z_4,\dots,\hat z_{n-k},z_{n-k+1},\dots,z_n]
\end{align}
respectively. If $A_{11}$ is equivalent to
$A_{1'1}$, by \eqref{g_beta}, there is $g\in(\ZZ_2)^{n-1}$ such that
\begin{align}\label{11-1'1}
\begin{split}
&\ti\varphi(\al(z_2,\dots,z_n))=g\be \ti\varphi(z_2,\dots,z_n),\\
&(\ell_2 \bar z_2,\ell_3z_3,\dots,\ell_{n-k}z_{n-k},c_{n-k+1}w_{n-k+1},\dots,c_{n}w_n)\\
&=g(\overline{\ell_2z_2},\ell_3z_3,\widehat{\ell_4z_4},\dots,\widehat{\ell_{n-k}z_{n-k}},
c_{n-k+1}w_{n-k+1},\dots,c_{n}w_n).
\end{split}
\end{align}

As before, $g\in\langle g_2,g_3\rangle$ does not satisfy \eqref{11-1'1} because of \eqref{r1}.
Since $$(c_{n-k+1}w_{n-k+1},\dots,c_{n}w_n)=g(c_{n-k+1}w_{n-k+1},\dots,c_{n}w_n),$$
$g$ is a composition of an even number of  generators $\{g_4,\dots,g_{n-k}\}$
 and $g\in\langle g_{n-k+1},$ $\dots,g_n\rangle$ does not satisfy \eqref{11-1'1}.
Then we can take $g=g_tg'$ with $t=4,\dots,n-k-1$ and
$g'\in\langle g_{t+1},\dots,g_{n-k}\rangle$. This implies that
\begin{align*}
\ell_tz_t=g(\widehat{\ell_tz_t})=
\begin{cases}
-\ell_tz_t                & \text {if $\widehat{\ell_tz_t}=\ell_tz_t$} \\
-\overline{\ell_tz_t}     & \text {if $\widehat{\ell_tz_t}=\overline{\ell_tz_t}$}.
\end{cases}
\end{align*}
Therefore there is no $g\in(\ZZ_2)^{n-1}=\langle g_2,\dots,g_n\rangle$ satisfying \eqref{11-1'1}.\\

\noindent\textit{Case 2}. $A_{j1}$ ($j=2,\dots,n-k-1$) is not equivalent to
$A_{1'1}$.\\
By the definition, the  actions $(\ZZ_2,M(B_1))_j$ and
$(\ZZ_2,M(B_1))_{1'}$ are
\begin{align}\label{j1}
\begin{split}
\al[z_2,\dots,z_n]&=[\bar z_2,\bar z_3,\dots,\bar z_{j+1},z_{j+2},\dots,z_n]\\
                &=[g_2g_{j+1}(\bar z_2,\bar z_3,\dots,\bar z_{j+1},z_{j+2},\dots,z_n)]\\
            &=[-\bar z_2,z_3,\dots,z_{j},-z_{j+1},z_{j+2},\dots,z_n]
\end{split}
\end{align}
and as in \eqref{1'1}
respectively.
If $A_{j1}$ is equivalent to
$A_{1'1}$, there is $g\in(\ZZ_2)^{n-1}$ such that
\begin{align}\label{j1-1'1}
\begin{split}
&\ti\varphi(\al(z_2,\dots,z_n))=g\be \ti\varphi(z_2,\dots,z_n),\\
&(-\ell_2 \bar z_2,\ell_3z_3,\dots,\ell_{j}z_j,-\ell_{j+1}z_{j+1},\ell_{j+2}z_{j+2},
         \dots,\ell_{n-k}z_{n-k},c_{n-k+1}w_{n-k+1},\\
& \dots,c_{n}w_n)\\
&=g(\overline{\ell_2z_2},\ell_3z_3,\widehat{\ell_4z_4},\dots,\widehat{\ell_{n-k}z_{n-k}},
c_{n-k+1}w_{n-k+1},\dots,c_{n}w_n).
\end{split}
\end{align}

Similar to the argument of the proof of \textit{Case 1} above,
$g\in\langle g_2,\dots,g_j\rangle$ $(j=2,\dots,n-k-1)$ does not satisfy \eqref{j1-1'1}.
Since
\begin{align}\label{tb1}
(c_{n-k+1}w_{n-k+1},\dots,c_{n}w_n)=g(c_{n-k+1}w_{n-k+1},\dots,c_{n}w_n)
\end{align}
in \eqref{j1-1'1},
$g\in\langle g_{n-k+1},\dots,g_n\rangle$ does not satisfy \eqref{j1-1'1}.
Because of
\begin{align*}
(-\ell_{j+1}z_{j+1})=
\begin{cases}
g(\ell_{j+1}z_{j+1})           & \text{if $j=2$}\\
g(\widehat{\ell_{j+1}z_{j+1}}) & \text{if $j=3,\dots,n-k-1$}
\end{cases}
\end{align*}
and \eqref{tb1},
then $g$ is a composition of an even number of  generators  $\{g_{j+1},$ $ \dots,$ $g_{n-k}\}$
which can be written by $g=g_{j+1}\dot{g}$ with $j=2,\dots,n-k-2$, 
$\dot{g}\in\langle g_{j+2},$ $\dots,g_{n-k}\rangle$.
However this implies that
\begin{align*}
\ell_{t}z_{t}=g(\widehat{\ell_{t}z_{t}})=
-\widehat{\ell_{t}z_{t}}    
\end{align*}
for some $t$ $(j+2\le t\le n-k)$.
For $j=n-k-1$, $g=g_{n-k}$ does not satisfy \eqref{j1-1'1}, because
by assumption \eqref{except},
$(\hat z_4,\dots,\hat z_n)\not=(z_4,\dots,z_n)$ in \eqref{1'1}.
Hence there is no $g\in(\ZZ_2)^{n-1}=\langle g_2,\dots,g_n\rangle$ satisfying \eqref{j1-1'1}.
This completes the proof of Claim i).
\\

\noindent{\bf Claim ii).}  $A_{l1}$ $(l=1,\dots,n-k-1)$ is not equivalent to
 $A_{j'1}$ $(j=2,\dots,k)$.

For this, we prove the following cases.\\

\noindent\textit{Case 1}. $A_{11}$ is not equivalent to
$A_{j'1}$ ($j=2,\dots,k$).\\
By the definition, the  actions $(\ZZ_2,M(B_1))_1$ and
$(\ZZ_2,M(B_1))_{j'}$ are as in \eqref{11}
and
\begin{align}\label{j'1}
\be[z_2,\dots,z_n]=[\bar z_2,z_3,\hat z_4,\dots,\hat z_{n-k},\bar z_{n-k+1},\dots,\bar z_{n-k+j-1},z_{n-k+j},\dots,z_n]
\end{align}
respectively.
If $A_{11}$ is equivalent to
$A_{j'1}$, there is $g\in(\ZZ_2)^{n-1}$ such that
\begin{align}\label{11-j'1}
\begin{split}
&\ti\varphi(\al(z_2,\dots,z_n))=g\be \ti\varphi(z_2,\dots,z_n),\\
&(\ell_2 \bar z_2,\ell_3z_3,\dots,\ell_{n-k}z_{n-k},c_{n-k+1}w_{n-k+1},\dots,c_{n}w_n)\\
&=g(\overline{\ell_2z_2},\ell_3z_3,\widehat{\ell_4z_4},\dots,\widehat{\ell_{n-k}z_{n-k}},
\overline{c_{n-k+1}w_{n-k+1}},\dots,\overline{c_{n-k+j-1}w_{n-k+j-1}},\\
&\hspace{.8cm}c_{n-k+j}w_{n-k+j},\dots,c_{n}w_n).
\end{split}
\end{align}
Since
\begin{align}\label{k-part}
\begin{split}
&(c_{n-k+1}w_{n-k+1},\dots,c_{n}w_n)=\\
&g(\overline{c_{n-k+1}w_{n-k+1}},\dots,\overline{c_{n-k+j-1}w_{n-k+j-1}},
c_{n-k+j}w_{n-k+j},\dots,c_{n}w_n),
\end{split}
\end{align}
it is clear that
there is no $g\in(\ZZ_2)^{n-1}=\langle g_2,\dots,g_n\rangle $ satisfying \eqref{11-j'1}.\\

\noindent\textit{Case 2}. $A_{i1}$ ($i=2,\dots,n-k-1$) is not equivalent to
$A_{j'1}$ ($j=2,\dots,k$).\\
By the definition, the  actions $(\ZZ_2,M(B_1))_i$ and
$(\ZZ_2,M(B_1))_{j'}$ are as in \eqref{j1}
and \eqref{j'1}
respectively.
If $A_{i1}$ is equivalent to
$A_{j'1}$ then there is $g\in(\ZZ_2)^{n-1}$ such that
\begin{align}\label{i1-j'1}
\begin{split}
&\ti\varphi(\al(z_2,\dots,z_n))=g\be \ti\varphi(z_2,\dots,z_n),\\
&(-\ell_2 \bar z_2,\ell_3z_3,\dots,\ell_{i}z_i,-\ell_{i+1}z_{i+1},\ell_{i+2}z_{i+2},
         \dots,\ell_{n-k}z_{n-k},c_{n-k+1}w_{n-k+1},\dots,\\ & c_{n}w_n)\\
&=g(\overline{\ell_2z_2},\ell_3z_3,\widehat{\ell_4z_4},\dots,\widehat{\ell_{n-k}z_{n-k}},
\overline{c_{n-k+1}w_{n-k+1}},\dots,\overline{c_{n-k+j-1}w_{n-k+j-1}},\\
&\hspace{.8cm}c_{n-k+j}w_{n-k+j},\dots,c_{n}w_n).
\end{split}
\end{align}
Because of \eqref{k-part},
there is no $g\in(\ZZ_2)^{n-1}=\langle g_2,\dots,g_n\rangle$ satisfying \eqref{i1-j'1}.
This completes the proof of Claim ii).
\\

\noindent{\bf Claim iii).}  $A_{l1}$ $(l=1,\dots,n-k-1)$ is not equivalent to
 $A_{(k+1)'1}$.

For this, we prove the following cases.\\

\noindent\textit{Case 1}. $A_{11}$ is not equivalent to
$A_{(k+1)'1}$.\\
By the definition, the  actions $(\ZZ_2,M(B_1))_1$ and
$(\ZZ_2,M(B_1))_{(k+1)'}$ are as in \eqref{11}
and
\begin{align}\label{1''1}
\be[z_2,\dots,z_n]=[\bar z_2,z_3,\hat z_4,\dots,\hat z_{n-k},\bar z_{n-k+1},\dots,\bar z_n]
\end{align}
respectively. If $A_{11}$ is equivalent to
$A_{(k+1)'1}$ then there is $g\in(\ZZ_2)^{n-1}$
such that
\begin{align}\label{11-1''1}
\begin{split}
&\ti\varphi(\al(z_2,\dots,z_n))=g\be \ti\varphi(z_2,\dots,z_n),\\
&(\ell_2 \bar z_2,\ell_3z_3,\dots,\ell_{n-k}z_{n-k},c_{n-k+1}w_{n-k+1},\dots,c_{n}w_n)\\
&=g(\overline{\ell_2z_2},\ell_3z_3,\widehat{\ell_4z_4},\dots,\widehat{\ell_{n-k}z_{n-k}},
\overline{c_{n-k+1}w_{n-k+1}},\dots,\overline{c_{n}w_n}).
\end{split}
\end{align}

As before, $g\in\langle g_2,g_3\rangle$ does not satisfy \eqref{11-1''1} because of \eqref{r1}.
Since
$$(c_{n-k+1}w_{n-k+1},\dots,c_{n}w_n)=g(\overline{c_{n-k+1}w_{n-k+1}},\dots,\overline{c_{n}w_n)},$$
$g$ is a composition of an odd number of  generators $\{g_4,\dots,g_{n-k}\}$
and $g\in\langle g_{n-k+1},$ $\dots,g_{n}\rangle$ does not satisfy \eqref{11-1''1}.
Then we can take $g=g_tg'$, with $t=4,\dots,n-k-1$ and
$g'\in\langle g_{t+1},\dots,g_{n-k}\rangle$. This implies that
\begin{align*}
\ell_tz_t=g(\widehat{\ell_tz_t})=
\begin{cases}
-\ell_tz_t                & \text {if $\widehat{\ell_tz_t}=\ell_tz_t$} \\
-\overline{\ell_tz_t}     & \text {if $\widehat{\ell_tz_t}=\overline{\ell_tz_t}$}.
\end{cases}
\end{align*}
For $t=n-k$, it is clear that $g=g_{n-k}$ does not also satisfy \eqref{11-1''1}.
Hence there is no $g\in(\ZZ_2)^{n-1}=\langle g_2,\dots,g_n\rangle$ satisfying \eqref{11-1''1}.
\\

\noindent\textit{Case 2}. $A_{j1}$ ($j=2,\dots,n-k-2$) is not equivalent to
$A_{(k+1)'1}$.\\
By the definition, the  actions $(\ZZ_2,M(B_1))_j$ and
$(\ZZ_2,M(B_1))_{(k+1)'}$ are as in \eqref{j1}
and  \eqref{1''1}
respectively.
If $A_{j1}$ is equivalent to
$A_{(k+1)'1}$ then there is $g\in(\ZZ_2)^{n-1}$
such that
\begin{align}\label{j1-1''1}
\begin{split}
&\ti\varphi(\al(z_2,\dots,z_n))=g\be \ti\varphi(z_2,\dots,z_n),\\
&(-\ell_2 \bar z_2,\ell_3z_3,\dots,\ell_{j}z_j,-\ell_{j+1}z_{j+1},\ell_{j+2}z_{j+2},
         \dots,\ell_{n-k}z_{n-k},c_{n-k+1}w_{n-k+1},\dots,\\ &c_{n}w_n)\\
&=g(\overline{\ell_2z_2},\ell_3z_3,\widehat{\ell_4z_4},\dots,\widehat{\ell_{n-k}z_{n-k}},
\overline{c_{n-k+1}w_{n-k+1}},\dots,\overline{c_{n}w_n}).
\end{split}
\end{align}
Note that by assumption (see \eqref{except}),
\begin{align}\label{ass}
\begin{split}
(z_3,\hat z_4,\dots,\hat z_{j+1},\hat z_{j+2},\dots,\hat z_{n-k})&\not=
(z_3,z_4,\dots,z_{j+1},\bar z_{j+2},\dots,\bar z_{n-k}),\\
(\hat z_4,\dots,\hat z_{n-k})&\not=
(z_4,\dots,z_{n-k})
\end{split}
\end{align}
in \eqref{1''1} for $j=2,\dots,n-k-2$.

Similar to the argument of the proof of \textit{Case 1} above,
$g\in\langle g_2,\dots,g_j\rangle$ $(j=2,\dots,n-k-1)$ does not satisfy \eqref{j1-1''1}.
Since
\begin{align}\label{tb2}
(c_{n-k+1}w_{n-k+1},\dots,c_{n}w_n)=g(\overline{c_{n-k+1}w_{n-k+1}},\dots,\overline{c_{n}w_n})
\end{align}
in \eqref{j1-1''1},
$g\in\langle g_{n-k+1},\dots,g_{n}\rangle$ does not satisfy \eqref{j1-1''1}.
Because of
\begin{align*}
(-\ell_{j+1}z_{j+1})=
\begin{cases}
g(\ell_{j+1}z_{j+1})           & \text{if $j=2$}\\
g(\widehat{\ell_{j+1}z_{j+1}}) & \text{if $j=3,\dots,n-k-2$}
\end{cases}
\end{align*}
and \eqref{tb2},
then $g$ is a composition of an odd number of  generators  $\{g_{j+1},\dots,$  $g_{n-k}\}$
which can be written as $g=g_{j+1}\dot{g}$  $(j=2,\dots,n-k-2)$, $\dot{g}\in\langle g_{j+2},\dots,g_{n-k}\rangle$.
However this implies that
\begin{align*}
\ell_{t}z_{t}=g(\widehat{\ell_{t}z_{t}})=
-\widehat{\ell_{t}z_{t}}    
\end{align*}
for some $t$ $(j+2\le t\le n-k)$.
Hence such $g$ does not satisfy \eqref{j1-1''1}. In particular, $g=g_{j+1}$ $(j=2,\dots,n-k-1)$
does not also
satisfy \eqref{j1-1''1} because of \eqref{ass}.
Thus there is no $g\in(\ZZ_2)^{n-1}=\langle g_2,\dots,g_n\rangle$ satisfying \eqref{j1-1''1}.
\\

\noindent\textit{Case 3}. $A_{(n-k-1)1}$ is not equivalent to
$A_{(k+1)'1}$.

By the definition, the  actions $(\ZZ_2,M(B_1))_{(n-k-1)}$ and
$(\ZZ_2,$ $M(B_1))_{(k+1)'}$ are
\begin{align}\label{(n-k-1)1}
\begin{split}
\al[z_2,\dots,z_n]&=[\bar z_2,\bar z_3,\dots,\bar z_{n-k},z_{n-k+1},\dots,z_n]\\
                &=[g_2(\bar z_2,\bar z_3,\dots,\bar z_{n-k},z_{n-k+1},\dots,z_n)]\\
            &=[-\bar z_2,z_3,\dots,z_{n-k},\bar z_{n-k+1},\dots,\bar z_n)]
\end{split}
\end{align}
and as in \eqref{1''1}
respectively.
If $A_{(n-k-1)1}$ is equivalent to
$A_{(k+1)'1}$ then there is $g\in(\ZZ_2)^{n-1}$
such that
\begin{align}\label{(n-k-1)1-1''1}
\begin{split}
&\ti\varphi(\al(z_2,\dots,z_n))=g\be \ti\varphi(z_2,\dots,z_n),\\
&(-\ell_2 \bar z_2,\ell_3z_3,\dots,\ell_{n-k}z_{n-k},c_{n-k+1}\bar w_{n-k+1},\dots,c_{n}\bar w_n)\\
&=g(\overline{\ell_2z_2},\ell_3z_3,\widehat{\ell_4z_4},\dots,\widehat{\ell_{n-k}z_{n-k}},
\overline{c_{n-k+1}w_{n-k+1}},\dots,\overline{c_{n}w_n}).
\end{split}
\end{align}
Note that by assumption \eqref{except}, $(\hat z_4,\dots,\hat z_{n-k})\not=
(z_4,\dots,z_{n-k})$ in \eqref{1''1}.
Therefore $g\in\langle g_{n-k+1},\dots,g_n\rangle$ does not satisfy \eqref{(n-k-1)1-1''1}.

Since $$(c_{n-k+1}\bar w_{n-k+1},\dots,c_{n}\bar w_n)=g(\overline{c_{n-k+1}w_{n-k+1}},\dots,\overline{c_{n}w_n}),$$
 $g$ is a composition of an even number of  generators $\{g_4,\dots,g_{n-k}\}$.
Similar to the argument in \textit{Case 1} (Claim i)), such $g$ does not satisfy \eqref{(n-k-1)1-1''1}.
Thus there is no $g\in(\ZZ_2)^{n-1}=\langle g_2,\dots,g_n\rangle$ satisfying \eqref{(n-k-1)1-1''1}.
This completes the proof of Claim iii).
\\

\noindent{\bf Claim iv).}  $A_{l1}$ $(l=n-k,\dots,n-2)$ is not equivalent to 
 $A_{j'1}$ ($j=1,\dots,k,k+1$).

For this, we prove the following cases.\\

\noindent\textit{Case 1}. $A_{l1}$ $(l=n-k,\dots,n-2)$ is not equivalent to
$A_{1'1}$.\\
By the definition, the  actions $(\ZZ_2,M(B_1))_l$ and
$(\ZZ_2,M(B_1))_{1'}$ are
\begin{align}\label{l1}
\begin{split}
\al[z_2,\dots,z_n]&=[\bar z_2,\bar z_3,\bar z_4,\dots,\bar z_{n-k},
   \bar z_{n-k+1},\dots,\bar z_{1+l},z_{2+l},\dots,z_{n} ] \\
&=[g_2(\bar z_2,\bar z_3,\bar z_4,\dots,\bar z_{n-k},
   \bar z_{n-k+1},\dots,\bar z_{1+l},z_{2+l},\dots,z_{n}) ] \\
&=[-\bar z_2,z_3,z_4,\dots,z_{n-k},
   z_{n-k+1},\dots,z_{1+l},\bar z_{2+l},\dots,\bar z_{n} ] \\
\end{split}
\end{align}
and as in \eqref{1'1}
respectively.
If $A_{l1}$ is equivalent to
$A_{1'1}$, 
there is $g\in(\ZZ_2)^{n-1}$
such that
\begin{align}\label{l1-1'1}
\begin{split}
&\ti\varphi(\al(z_2,\dots,z_n))=g\be \ti\varphi(z_2,\dots,z_n),\\
&(-\ell_2 \bar z_2,\ell_3z_3,\ell_4z_4,\dots,\ell_{n-k}z_{n-k},c_{n-k+1}y'_{n-k+1},\dots,c_{n}y'_n)\\
&=g(\overline{\ell_2z_2},\ell_3z_3,\widehat{\ell_4z_4},\dots,\widehat{\ell_{n-k}z_{n-k}},
c_{n-k+1}w_{n-k+1},\dots,c_{n}w_n)
\end{split}
\end{align}
where
\begin{align*}
&(y'_{n-k+1},
\dots,
y'_{n})
=
p(
(x_{n-k+1},
\dots,
x_{1+l},
-x_{2+l},
\dots,
-x_{n})
\,^tD
).
\end{align*}
Note that by assumption \eqref{except}, $$(\hat z_4,\dots,\hat z_{n-k})\not=(z_4,\dots,z_{n-k})$$
in \eqref{1'1}.
Therefore $g\in\langle g_{n-k+1},\dots,g_n\rangle$ does not occur.
As before $g\in\langle g_2,g_3\rangle$ does not also  satisfy \eqref{l1-1'1}.

Now we consider $g\in\langle g_4,\dots,g_{n}\rangle$. We may write
$g=g_tg'g''$ with $t=4,\dots,n-k-1$, $g'\in\langle g_{t+1},\dots,g_{n-k}\rangle$,
$g''\in\langle g_{n-k+1},\dots,g_{n}\rangle$ (if $t=n-k$ then $g=g_{n-k}g''$).
This implies that
\begin{align*}
\ell_tz_t=g(\widehat{\ell_tz_t})=
\begin{cases}
-\ell_tz_t                & \text {if $\widehat{\ell_tz_t}=\ell_tz_t$} \\
-\overline{\ell_tz_t}     & \text {if $\widehat{\ell_tz_t}=\overline{\ell_tz_t}$}.
\end{cases}
\end{align*}
Therefore
there is no $g\in(\ZZ_2)^{n-1}=\langle g_2,\dots,g_n\rangle$ satisfying \eqref{l1-1'1}.
\\

\noindent\textit{Case 2}. $A_{l1}$ ($l=n-k,\dots,n-2$) is not equivalent to
$A_{j'1}$ ($j=2,\dots,k$).\\
By the definition, the  actions $(\ZZ_2,M(B_1))_l$ and
$(\ZZ_2,M(B_1))_{j'}$ are as in \eqref{l1}
and \eqref{j'1}
respectively.
If $A_{l1}$ is equivalent to
$A_{j'1}$ then there is $g\in(\ZZ_2)^{n-1}$
such that
\begin{align}\label{l1-j'1}
\begin{split}
&\ti\varphi(\al(z_2,\dots,z_n))=g\be \ti\varphi(z_2,\dots,z_n),\\
&(-\ell_2 \bar z_2,\ell_3z_3,\ell_4z_4,\dots,\ell_{n-k}z_{n-k},c_{n-k+1}y'_{n-k+1},\dots,c_{n}y'_n)\\
&=g(\overline{\ell_2z_2},\ell_3z_3,\widehat{\ell_4z_4},\dots,\widehat{\ell_{n-k}z_{n-k}},
\overline{c_{n-k+1}w_{n-k+1}},\dots,\overline{c_{n-k+j-1}w_{n-k+j-1}},\\
&\hspace{.8cm}c_{n-k+j}w_{n-k+j},\dots,c_{n}w_n).
\end{split}
\end{align}
Note that $$(\hat z_4,\dots,\hat z_{n-k})\not=(z_4,\dots,z_{n-k})$$ in \eqref{j'1}.
Similar to the previous argument,
there is no $g\in(\ZZ_2)^{n-1}=\langle g_2,\dots,g_n\rangle$ satisfying \eqref{l1-j'1}.
\\

\noindent\textit{Case 3}. $A_{l1}$ ($l=n-k,\dots,n-2$) is not equivalent to
$A_{(k+1)'1}$.\\
By the definition, the  actions $(\ZZ_2,M(B_1))_{l}$ and
$(\ZZ_2,M(B_1))_{(k+1)'}$ are as in \eqref{l1}
and \eqref{1''1}
respectively.
If $A_{l1}$ is equivalent to
$A_{(k+1)'1}$ then there is $g\in(\ZZ_2)^{n-1}$
such that
\begin{align}\label{l1-1''1}
\begin{split}
&\ti\varphi(\al(z_2,\dots,z_n))=g\be \ti\varphi(z_2,\dots,z_n),\\
&(-\ell_2 \bar z_2,\ell_3z_3,\ell_4z_4,\dots,\ell_{n-k}z_{n-k},c_{n-k+1}y'_{n-k+1},\dots,c_{n}y'_n)\\
&=g(\overline{\ell_2z_2},\ell_3z_3,\widehat{\ell_4z_4},\dots,\widehat{\ell_{n-k}z_{n-k}},
\overline{c_{n-k+1}w_{n-k+1}},\dots,\overline{c_{n}w_n}).
\end{split}
\end{align}
\\

Note that by assumption
$$(\hat z_4,\dots,\hat z_{n-k})\not=(z_4,\dots,z_{n-k})$$  in \eqref{1''1}.
Similar to the argument in \textit{Case 1} (Claim iv)) above,
there is no $g\in(\ZZ_2)^{n-1}=\langle g_2,\dots,g_n\rangle$ satisfying \eqref{l1-1''1}.
This completes the proof of Claim iv).
\end{proof}

\begin{theorem}\label{type1..1k}
Let $M(A)=S^1\times_{\ZZ_2}M(B)$ be an $n$-dimensional real Bott
manifold. Suppose that $B$ is either one of the list in
\eqref{bottB}. Then $M(B)$ are diffeomorphic each other and the
number of diffeomorphism classes of such real Bott manifolds $M(A)$
above is $(k+1)2^{n-k-3}$ $(k\geq 2$, $n-k\geq 3)$.

\begin{align}\label{bottB}
\begin{split}
B_{1}&={
\left (\begin{array}{ccccccc}
 1 & 1 & 1& \dots & \dots& \dots& 1 \\
   & 1 & 1& \dots & \dots& \dots& 1  \\
   &   & \ddots & & & & \vdots\\
   &   &        & 1 & 1 & \dots &  1\\
   &   &   &\\
   & \mbox{\large$0$} & &  &  & \mbox{\large $I_k$}   &  \\
  &
\end{array}\right)},
B_{2}={
\left (\begin{array}{ccccccc}
 1 & 1 & 0& \dots & \dots& \dots& 0 \\
   & 1 & 1& \dots & \dots& \dots& 1  \\
   &   & \ddots & & & & \vdots\\
   &   &        & 1 & 1 & \dots &  1\\
   &   &   &\\
   & \mbox{\large $0$} & &  &  & \mbox{\large $I_k$}   &  \\
  &
\end{array}\right)},\\
&\vdots\\
&B_{n-k-1}={
\left (\begin{array}{cccccccc}
 1 & 1 & 0& \dots & \dots& \dots& \dots& 0 \\
   & 1 & 1& 0 & \dots& \dots& \dots & 0  \\
   &   & \ddots & & & & & \vdots\\
   &   &        & 1 & 1 & 0 & \dots &  0\\
   &   &        &   & 1 & 1 & \dots &  1\\
   &   &   &\\
   & \mbox{\large$0$} & &  & &  & \mbox{\large $I_k$}   &  \\
  &
\end{array}\right)},\\
&B_{n-k}={
\left (\begin{array}{cccccccccc}
 1 & 1 & 0& \dots & \dots& \dots& \dots & \dots& \dots& 0 \\
   & 1 & 1& 0 & \dots& \dots& \dots & \dots& \dots & 0  \\
   &   & \ddots & & & & & & &\vdots\\
   &   &        & 1 & 1 & 0 & 0 &0 & \dots &  0\\
   &   &        &  &1 & 1 & 1 & 0 & \dots &  0\\
   &   &        &  & & 1 & 1 & 0 & \dots &  0\\
   &   &        &  & &   & 1 & 1 & \dots &  1\\
   &   &   &\\
   & \mbox{\large$0$} & & & & &   & &  \mbox{\large $I_k$}   &  \\
  &
\end{array}\right)},\\
&\vdots\\
&B_{n-k+(n-k-4)}={
\left (\begin{array}{cccccccc}
 1 & 1 & \dots & \dots & 1 & 0& \dots& 0 \\
   & 1 & \dots& \dots & 1 & 0& \dots & 0  \\
   &   & \ddots & &\vdots &\vdots & & \vdots\\
   &   &        & 1 & 1 & 0 & \dots &  0\\
   &   &        &   & 1 & 1 & \dots &  1\\
   &   &   &\\
   & \mbox{\large$0$} & &  & &  & \mbox{\large $I_k$}   &  \\
  &
\end{array}\right)}.
\end{split}
\end{align}
\end{theorem}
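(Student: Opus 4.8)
The plan is to turn the statement into a counting problem for $\ZZ_2$-actions on one fixed manifold and then read the count off from Lemmas \ref{lem1}, \ref{lem2}, \ref{lem3} together with Remark \ref{rem_lem}. First I would dispose of the opening assertion, that the manifolds $M(B_i)$ attached to the matrices in \eqref{bottB} are mutually diffeomorphic. Each $B_i$ is an upper triangular Bott matrix of size $n-1$ whose bottom-right block is $I_k$ and which differs from $B_1$ of \eqref{matrixB1} only in the placement of the off-diagonal $1$'s in its first $n-k-1$ rows. I would establish $B_i\sim B_1$ by the mechanism already used in Section \ref{EXAMPEL}: normalize with moves {\bf I}, {\bf II}, {\bf III}, then exhibit an explicit equivariant affine diffeomorphism between the corresponding actions and invoke Theorem \ref{T2}. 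This pins down a single diffeomorphism type $M(B):=M(B_1)$ and lets me henceforth work with $B_1$ alone.

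Since every manifold in question is $S^1\times_{\ZZ_2}M(B)$ with $M(B)$ now fixed, Theorem \ref{T1} and Theorem \ref{T2} reduce the classification to counting equivalence classes of the Bott matrices $\left(\begin{smallmatrix}1&\ast\\0&B_1\end{smallmatrix}\right)$, equivalently the equivariant diffeomorphism classes of the $\ZZ_2$-actions on $M(B_1)$ cut out by the admissible first rows $\ast$. The heart of the argument is a \emph{completeness} claim: every admissible first row reduces, through moves {\bf I}, {\bf II}, {\bf III} and an equivariant diffeomorphism of $(\ZZ_2,M(B_1))$, to exactly one of the canonical forms in \eqref{star1} (the matrices $A_{i1}$, $i=1,\dots,n-2$) or in \eqref{star2} (the matrices derived from $A_{j'1}$, $j=1,\dots,k,k+1$). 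Here Remark \ref{rem_lem} is exactly what prevents double counting: the patterns $(\{\hat 1\}_4,\dots,\{\hat 1\}_{n-k})=(0,\dots,0)$ and $(\overbrace{0,\dots,0}^{l},1,\dots,1)$, which are deliberately excluded from \eqref{star2} by \eqref{except}, are precisely the actions that become equivariantly diffeomorphic to members of the \eqref{star1} family and hence have already been accounted for there.

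With completeness in hand, the pairwise non-equivalence of the chosen representatives is supplied verbatim by the three lemmas: Lemma \ref{lem1} yields the $n-2$ distinct classes in \eqref{star1}, Lemma \ref{lem2} yields the $k(2^{n-k-3}-1)+(2^{n-k-3}-(n-k-2))$ distinct classes from \eqref{star2}, and Lemma \ref{lem3} shows the two families are disjoint. Adding these contributions gives
\begin{align*}
(n-2)+k\bigl(2^{n-k-3}-1\bigr)+\bigl(2^{n-k-3}-(n-k-2)\bigr)=(k+1)2^{n-k-3},
\end{align*}
which is the asserted number. I expect the genuine obstacle to be the completeness step rather than the non-equivalence: one must check that the reduction procedure really carries an arbitrary admissible first row into one of the enumerated normal forms, that the overlap between \eqref{star1} and \eqref{star2} is captured \emph{exactly} by the two exceptional patterns of Remark \ref{rem_lem}, and that no admissible pattern is either missed or counted twice. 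The distinctness of the representatives is already guaranteed by Lemmas \ref{lem1}--\ref{lem3}, so it is precisely the exhaustiveness and irredundancy of the list, and the careful bookkeeping this demands, that carries the weight of the proof.
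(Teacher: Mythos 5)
Your proposal follows the paper's own proof: first show the $M(B_i)$ are mutually diffeomorphic by explicit equivariant affine diffeomorphisms, then reduce via Theorem \ref{T2} to classifying $\ZZ_2$-actions up to equivariant diffeomorphism, take the normal forms \eqref{star1} and \eqref{star2} as representatives with distinctness supplied by Lemmas \ref{lem1}--\ref{lem3}, irredundancy by Remark \ref{rem_lem}, and the same arithmetic $(n-2)+k(2^{n-k-3}-1)+\bigl(2^{n-k-3}-(n-k-2)\bigr)=(k+1)2^{n-k-3}$. The one point where you are thinner than the paper is precisely the completeness step you flag: the paper does not deduce it from the bare diffeomorphism $M(B_j)\cong M(B_1)$, but spends the bulk of its proof explicitly transporting every Bott matrix built over $B_j$ $(j\geq 2)$ through chains of equivariant diffeomorphisms of the form $\varphi([z_2,\dots,z_n])=[z_2,\dots,{\bf i}z_{j+1},\dots,z_n]$ down to a matrix built over $B_1$, and only then reduces matrices over $B_1$ to \eqref{star1}/\eqref{star2} by the ad hoc normalization you invoke — so your plan is correct provided that transport is actually carried out.
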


\begin{proof}
Note that each $B_j$ is of size $n-1$ and  $M(B_j)=S^1\times_{\ZZ_2}M(B'_j)$ for some 
$(n-2)$-dimensional real Bott manifold $M(B'_j)$. 
$M(B_{j-1})$ is diffeomorphic to $M(B_j)$ 
($j=2,3,\dots,n-k-1$ ) by the equivariant diffeomorphism 
$\varphi \colon(\ZZ_2,M(B'_{j-1}))$ $\to (\ZZ_2,M(B'_{j}))$
which is defined by
$\varphi([z_2,\dots,z_{n-1}])=[z_2,\dots,{\bf i}z_j,\dots,z_{n-1}]$. 
$M(B_a)$ is diffeomorphic to $M(B_b)$ 
($a=n-k+(n-k-3)-(j-1)$; $b=n-k+(n-k-3)-(j-2)$; $j=n-k-1,n-k-2,\dots,4,3$ )
 by the equivariant diffeomorphism 
$\varphi :(\ZZ_2,M(B'_{a}))\rightarrow (\ZZ_2,M(B'_{b}))$
which is defined by
$\varphi([z_2,\dots,z_{n-1}])=[z_2,\dots,{\bf i}z_{j-1},\dots,z_{n-1}]$.

By the hypothesis, there are $2^{n-2}$ possible
$\ZZ_2$-actions on each $M(B_j)$, $j=1,\dots,2(n-k)-4$.
We shall prove that among $2^{n-2}\times (2(n-k)-4)$ real Bott manifolds
created from $M(B_j)$ $(j=1,\dots,2(n-k)-4)$,
there are only
$(k+1)2^{n-k+3}$ diffeomorphism classes.

First of all we show that there are $(k+1)2^{n-k+3}$ diffeomorphism classes
of real Bott manifolds $M(A_{i1})$ created from $M(B_1)$ by $\ZZ_2$-actions.

If a Bott matrix $A'$  created from $B_1$  is different from the Bott matrices in \eqref{star1} and \eqref{star2}, 
it is easy to  check that the corresponding 
$(\ZZ_2,M(B_1))$ is equivariantly diffeomorphic to one of the actions $(\ZZ_2,M(B_1))$ 
corresponding to the Bott matrices in \eqref{star1} or \eqref{star2} by the ad hoc argument. 
(Compare Section \ref{EXAMPEL}
for the argument to find an equivariant diffeomorphism.)
Once there exists such an equivariant diffeomorphism, $A'$ is equivalent to one of 
$A_{i1}$'s ($i=1,\dots,n-2$) in \eqref{star1} or $A_{j'1}$'s ($j=1,\dots,k,(k+1)$) in \eqref{star2}
  by Theorem \ref{T2}. 
Then, because of Lemma \ref{lem1}, \ref{lem2}, \ref{lem3} and Remark \ref{rem_lem},
there are $(n-2)+ (2^{n-k-3}-1)k+2^{n-k-3}-(n-k-2)=(k+1)2^{n-k+3}$ distinct diffeomorphism classes
of $M(A_{i1})$\ $(i=1,\dots,2^{n-2})$ created from $M(B_1)$.

Next, we show that  $M(A_{ij})$ $(i=1,\dots,2^{n-2})$ created from $M(B_j)$
$(j=2,\dots,n-k+(n-k-4))$
is diffeomorphic to one of the real Bott manifolds corresponding to
Bott matrices in \eqref{star1} or \eqref{star2}.

For brevity we can consider 
\begin{align}\label{B1}
A_{\ell 1}=
{
\left (\begin{array}{c|ccccccc}
 1& 1 & 0 & \{\hat 1\}_{4_{(1)}} & \dots & \dots& \dots& \{\hat 1\}_{n_{(1)}} \\
\hline
\mbox{\Large$0$} &  &   &  & \mbox{\large$B_1$} & & &
\end{array}\right)}
\end{align}
($\ell=1,\dots,n-2,1',\dots,k',(k+1)'$)
 representing each $A_{j'1}$ $(j=1,\dots,k,(k+1))$ in \eqref{star2},
  and each $A_{i1}$ $(i=1,\dots,n-2)$ in \eqref{star1}, where $A_{i1}$ $(i=2,\dots,n-2)$
  is equivalent to \eqref{B1}
by the equivariant diffeomorphism $\varphi \colon (\ZZ_2,M(B_{1}))\to (\ZZ_2,M(B_{1}))$
which is defined by
$\varphi([z_2,\dots,z_{n}])=[{\bf i}z_2,\dots,z_{n}]$.
Note that $\{\hat 1\}_{l_{(j)}}$ means $\hat 1(\in\{0,1\})$ in the $l$-th spot where
the corresponding Bott matrix is created from $B_j$.

Now we define an equivariant diffeomorphism
$\varphi \colon(\ZZ_2,M(B_{1}))\to (\ZZ_2,$ $M(B_{2}))$ by
$\varphi([z_2,\dots,z_{n}])=[z_2,{\bf i}z_3,z_4,\dots,z_{n}]$, then
(\ref{B1}) is equivalent to 
\begin{align}\label{B2}
A_{\ell 2}=
{
\left (\begin{array}{c|ccccccc}
 1& 1 & 0 & \{\hat 1\}_{4_{(2)}} & \dots & \dots& \dots& \{\hat 1\}_{n_{(2)}} \\
\hline
\mbox{\Large$0$} &  &   &  & \mbox{\large$B_2$} & & &
\end{array}\right)}.
\end{align}
Next we define an equivariant diffeomorphism $\varphi \colon(\ZZ_2,M(B_{2}))\to (\ZZ_2,$  $M(B_{3}))$
by $\varphi([z_2,\dots,z_{n}])=[z_2,z_3,{\bf i}z_4,z_5,\dots,z_{n}]$ so that
Bott matrix \eqref{B2} is equivalent to
\begin{align*}
A_{\ell 3}=
{
\left (\begin{array}{c|cccccccc}
 1& 1 & 0 & \{\hat 1\}_{4_{(3)}} & \dots & \dots& \dots& \dots &\{\hat 1\}_{n_{(3)}} \\
\hline
\mbox{\Large$0$} &  &   &  & \mbox{\large$B_3$} & & &
\end{array}\right)}.
\end{align*}

In general, let us consider 
\begin{align}\label{Bj-1}
\begin{split}
&A_{\ell (j-1)}=\\
&{
\left (\begin{array}{c|cccccccccc}
 1& 1 & 0 & \{\hat 1\}_{4_{(j-1)}} & \dots & \dots& \dots& \dots& \dots& \dots &\{\hat 1\}_{n_{(j-1)}} \\
\hline
 & 1 & 1 & 0& \dots & \dots& \dots& \dots& \dots& \dots& 0 \\
 &   & 1 & 1 & 0& \dots & \dots& \dots& \dots& \dots& 0 \\
 &   &   & \ddots &   & &  & & & & \vdots\\
 &   &   & & 1 & 1& 0 & \dots& \dots& \dots& 0  \\
 &   &   & &   & 1& 1     & \dots& \dots&  \dots&1  \\
\mbox{\Large$0$} &  &   & &   &  & 1     & 1 & \dots&  \dots&1  \\
 &  &   & \mbox{\Large$0$} &   & & \ddots & & & & \vdots\\
 &  &   & &   & &        & 1 & 1 & \dots &  1\\
 &  &   & &   & &   &\\
 &  &   & &  & & & &  & \mbox{\large$I_k$}   &  \\
 &  &
\end{array}\right)
\begin{array}{c}
g_1\\
g_2\\
g_3\\
\vdots\\
g_{j-1}\\
g_{j}\\
g_{j+1}\\
\vdots\\
g_{n-k}\\
\\
\\
\\
\end{array}}
\end{split}
\end{align}
$(j=2,\dots,n-k-1)$.
Defining an equivariant diffeomorphism
$\varphi \colon(\ZZ_2,M(B_{j-1}))\to (\ZZ_2,M(B_{j}))$ by
$\varphi([z_2,$ $\dots,z_{n}])=[z_2,\dots,z_{j},{\bf i}z_{j+1},z_{j+2},\dots,z_{n}]$, 
we obtain that \eqref{Bj-1} is equivalent to 

\begin{align*}
&A_{\ell(j)}=
{
\left (\begin{array}{c|cccccccccc}
 1& 1 & 0 & \{\hat 1\}_{4_{(j)}} & \dots & \dots& \dots& \dots& \dots& \dots &\{\hat 1\}_{n_{(j)}} \\
\hline
 & 1 & 1 & 0& \dots & \dots& \dots& \dots& \dots& \dots& 0 \\
 &   & 1 & 1 & 0& \dots & \dots& \dots& \dots& \dots& 0 \\
 &   &   & \ddots &   & &  & & & & \vdots\\
 &   &   & & 1 & 1& 0 & \dots& \dots& \dots& 0  \\
 &   &   & &   & 1& 1     & 0 & \dots&  \dots&0  \\
\mbox{\Large$0$} &  &   & &   &  & 1     & 1 & \dots&  \dots&1  \\
 &  &   & \mbox{\Large$0$} &   & & \ddots & & & & \vdots\\
 &  &   & &   & &        & 1 & 1 & \dots &  1\\
 &  &   & &   & &   &\\
 &  &   & &  & & & &  & \mbox{\large$I_k$}   &  \\
 &  &
\end{array}\right)
\begin{array}{c}
g_1\\
g_2\\
g_3\\
\vdots\\
g_{j-1}\\
g_{j}\\
g_{j+1}\\
\vdots\\
g_{n-k}\\
\\
\\
\\
\end{array}}.
\end{align*}
Therefore the previous Bott matrix is equivalent to
\begin{align}\label{i(n-k-1)}
\begin{split}
&A_{\ell(n-k-1)}=\\
&{
\left (\begin{array}{c|cccccccccccc}
 1& 1 & 0 & \{\hat 1\}_{4_{(n-k-1)}} & \dots & \dots& \dots& \dots & \dots & \dots &\{\hat 1\}_{n_{(n-k-1)}} \\
\hline
\mbox{\Large$0$} &  &   &  & \mbox{\large$B_{n-k-1}$} & & &\\
\end{array}\right)}.
\end{split}
\end{align}

Next,  \eqref{i(n-k-1)}  is equivalent to the following one by
the equivariant diffeomorphism
$\varphi \colon(\ZZ_2,M(B_{n-k-1}))$ $\to (\ZZ_2,M(B_{n-k}))$ defined by
$\varphi([z_2,\dots,z_{n}])=[z_2,\dots,z_{n-k-2},$ ${\bf i}z_{n-k-1},z_{n-k},\dots,z_{n}]$
\begin{align*}
A_{\ell(n-k)}=
{
\left (\begin{array}{c|cccccccccccc}
 1& 1 & 0 & \{\hat 1\}_{4_{(n-k)}} & \dots & \dots& \dots& \dots & \dots & \dots &\{\hat 1\}_{n_{(n-k)}} \\
\hline
\mbox{\Large$0$} &  &   &  & \mbox{\large$B_{n-k}$} & & &\\
\end{array}\right)}.
\end{align*}

In general, let us consider the following Bott matrix
\begin{align}\label{Bj-1b}
\begin{split}
&A_{\ell a}=\\
&{
\left (\begin{array}{c|ccccccccccc}
 1& 1 & 0 & \{\hat 1\}_{4_{(a)}} & \dots & \dots& \dots& \dots & \dots & \dots & \dots &\{\hat 1\}_{n_{(a)}} \\
\hline
 & 1 & 1 & 0& \dots & \dots& \dots& \dots& \dots & \dots & \dots & 0 \\
 &   & 1 & 1 & 0& \dots & \dots& \dots& \dots& \dots  & \dots & 0 \\
 &   &   & \ddots &   & &  & & & &  &\vdots\\
 &   &   &        & 1 & 1& 0 & \dots& 0 & 0& \dots& 0  \\
 &   &   &        &   & 1& 1     & \dots& 1 & 0 & \dots&0  \\
\mbox{\Large$0$} & &  &  &   &  & 1   &\dots  & 1 & 0 &  \dots& 0  \\
 &  &    &       & & & \ddots & & & & & \vdots\\
 &  &    &\mbox{\Large$0$} & & &        & 1 & 1 & 0 & \dots &  0\\
 &  &    &       & & &        & & 1 & 1 & \dots &  1\\
 &  &   & &   &\\
 &  &   & & & & & & & & \mbox{\large$I_k$}   &  \\
 &  &
\end{array}\right)
\begin{array}{c}
g_1\\
g_2\\
g_3\\
\vdots\\
g_{j-1}\\
g_{j}\\
g_{j+1}\\
\vdots\\
g_{n-k-1}\\
g_{n-k}\\
\\
\\
\\
\end{array}}
\end{split}
\end{align}
where $a=n-k+(n-k-3)-(j-1)$ $(j=n-k-1,n-k-2,\dots,4,3)$. Defining an equivariant diffeomorphism
$\varphi \colon(\ZZ_2,M(B_{a}))$ $\to (\ZZ_2,M(B_{b}))$ by
$\varphi([z_2,\dots,z_{n}])=[z_2,\dots,z_{j-1},{\bf i}z_{j},$ $z_{j+1},\dots,z_{n}]$, 
we obtain that \eqref{Bj-1b} is equivalent to 
\begin{align*}
&A_{\ell b}=\\
&{
\left (\begin{array}{c|ccccccccccc}
 1& 1 & 0 & \{\hat 1\}_{4_{(b)}} & \dots & \dots& \dots& \dots & \dots & \dots & \dots &\{\hat 1\}_{n_{(b)}} \\
\hline
 & 1 & 1 & 0& \dots & \dots& \dots& \dots& \dots & \dots & \dots & 0 \\
 &   & 1 & 1 & 0& \dots & \dots& \dots& \dots& \dots  & \dots & 0 \\
 &   &   & \ddots &   & &  & & & &  &\vdots\\
 &   &   &        & 1 & 1& 0 & \dots& 0 & 0& \dots& 0  \\
 &   &   &        &   & 1& 1     & \dots& 1 & 0 & \dots&0  \\
\mbox{\Large$0$} & &  &  &   &  & 1   &\dots  & 1 & 0 &  \dots& 0  \\
 &  &    &       & & & \ddots & & & & & \vdots\\
 &  &    &\mbox{\Large$0$} & & &        & 1 & 1 & 0 & \dots &  0\\
 &  &    &       & & &        & & 1 & 1 & \dots &  1\\
 &  &   & &   &\\
 &  &   & & & & & & & & \mbox{\large$I_k$}   &  \\
 &  &
\end{array}\right)
\begin{array}{c}
g_1\\
g_2\\
g_3\\
\vdots\\
g_{j-2}\\
g_{j-1}\\
g_{j}\\
\vdots\\
g_{n-k-1}\\
g_{n-k}\\
\\
\\
\\
\end{array}}
\end{align*}
where $b=n-k+(n-k-3)-(j-2)$.
Therefore the previous Bott matrix is equivalent to
\begin{align*}
A_{\ell c}=
{
\left (\begin{array}{c|cccccccc}
 1&1 & 0 & \{\hat 1\}_{4_{(c)}} & \dots & \{\hat 1\}_{n-k_{(c)}} & \{\hat 1\}_{n-k+1_{(c)}} & \dots& \{\hat 1\}_{n_{(c)}} \\
\hline
\mbox{\Large$0$} &  &   &  & \mbox{\large$B_{2(n-k)-4}$} & & &
\end{array}\right)}
\end{align*}
where $c=n-k+(n-k-3)-1$.

Finally each Bott matrix
\begin{align*}
{
\left (\begin{array}{c|ccccc}
1 & 1 & 1 & \{\hat 1\}_{4_{(j)}} & \dots & \{\hat 1\}_{n_{(j)}}\\
\hline
\mbox{\Large$0$}& & & \mbox{\large$B_j$}\\
\end{array}\right)}
\end{align*}
for $j=2,\dots,2(n-k)-4$
is equivalent to one of Bott matrices in \eqref{star1} or \eqref{star2}
by the equivariant diffeomorphism $\varphi([z_2,\dots,z_n])=[{\bf i}z_2,z_3,\dots,z_n]$. 
This completes the proof of the theorem.
\end{proof}

\noindent{\bf Acknowledgment}. I would like to thank Professor
Mikiya Masuda  for his useful suggestions.

\end{document}